\definecolor{my-linkcolor}{rgb}{0.75,0,0}
\definecolor{my-citecolor}{rgb}{0.1,0.57,0}
\definecolor{my-urlcolor}{rgb}{0,0,0.75}
\title[A general mirror equivalence theorem]{A general mirror equivalence theorem for coset vertex operator algebras}
 \author{Robert McRae}
\date{}
\address{Yau Mathematical Sciences Center, Tsinghua University, Beijing 100084, China}
\curraddr{}
\email{rhmcrae@tsinghua.edu.cn}
\subjclass{Primary 17B69, 17B68, 18M15, 81R10}
\keywords{vertex operator algebras, coset conformal field theory, braid-reversed tensor equivalence, Virasoro algebra}
\numberwithin{equation}{section}
\theoremstyle{definition}\newtheorem{rema}{Remark}[section]
\theoremstyle{plain}\newtheorem{propo}[rema]{Proposition}
\newtheorem{theo}[rema]{Theorem}
\theoremstyle{definition}\newtheorem{defi}[rema]{Definition}
\theoremstyle{plain}\newtheorem{lemma}[rema]{Lemma}
\newtheorem{corol}[rema]{Corollary}
\theoremstyle{definition}
\theoremstyle{definition}
\theoremstyle{definition}
\theoremstyle{definition}\newtheorem{assum}[rema]{Assumption}
\newcommand{\cY}{\mathcal{Y}}
\newcommand{\cU}{\mathcal{U}}
\newcommand{\cV}{\mathcal{V}}
\newcommand{\cA}{\mathcal{A}}
\newcommand{\cR}{\mathcal{R}}
\newcommand{\cM}{\mathcal{M}}
\newcommand{\cS}{\mathcal{S}}
\newcommand{\cF}{\mathcal{F}}
\newcommand{\cO}{\mathcal{O}}
\newcommand{\cC}{\mathcal{C}}
\newcommand{\cG}{\mathcal{G}}
\newcommand{\cW}{\mathcal{W}}
\newcommand{\cL}{\mathcal{L}}
\newcommand{\til}{\widetilde}
\newcommand{\CC}{\mathbb{C}}
\newcommand{\ZZ}{\mathbb{Z}}
\newcommand{\NN}{\mathbb{N}}
\newcommand{\RR}{\mathbb{R}}
\newcommand{\Id}{\mathrm{Id}}
\newcommand{\tensU}{\boxtimes_U}
\newcommand{\tensV}{\boxtimes_V}
\newcommand{\tensC}{\boxtimes_{\mathcal{C}}}
\newcommand{\tensA}{\boxtimes_A}
\newcommand{\tens}{\boxtimes}
\newcommand{\vac}{\mathbf{1}}
\newcommand{\ind}{\mathrm{Ind}}
\DeclareMathOperator{\im}{Im}
\DeclareMathOperator{\rep}{Rep}
\let\ker\relax
\let\hom\relax
\DeclareMathOperator{\ker}{Ker}
\DeclareMathOperator{\hom}{Hom}
\DeclareMathOperator{\Endo}{End}
\newcommand{\even}{{\bar{0}}}
\newcommand{\odd}{{\bar{1}}}
\newcommand{\repA}{\rep A}
\begin{document}

\begin{abstract}
 We prove a general mirror duality theorem for a subalgebra $U$ of a simple conformal vertex algebra $A$ and its commutant $V=\mathrm{Com}_A(U)$. Specifically, we assume that $A\cong\bigoplus_{i\in I} U_i\otimes V_i$ as a $U\otimes V$-module, where the $U$-modules $U_i$ are simple and distinct and are objects of a semisimple braided ribbon category of $U$-modules, and the $V$-modules $V_i$ are semisimple and contained in a (not necessarily rigid) braided tensor category of $V$-modules. We also assume $U=\mathrm{Com}_A(V)$. Under these conditions, we construct a braid-reversed tensor equivalence $\tau: \mathcal{U}_A\rightarrow\mathcal{V}_A$, where $\mathcal{U}_A$ is the semisimple category of $U$-modules with simple objects $U_i$, $i\in I$, and $\mathcal{V}_A$ is the category of $V$-modules whose objects are finite direct sums of the $V_i$. In particular, the $V$-modules $V_i$ are simple and distinct, and $\mathcal{V}_A$ is a rigid tensor category. As an application, we find a rigid semisimple tensor subcategory of modules for the Virasoro algebra at central charge $13+6p+6p^{-1}$, $p\in\mathbb{Z}_{\geq 2}$, which is braided tensor equivalent to an abelian $3$-cocycle twist of the category of finite-dimensional $\mathfrak{sl}_2$-modules. Consequently, the Virasoro vertex operator algebra at central charge $13+6p+6p^{-1}$ is the $PSL_2(\mathbb{C})$-fixed-point subalgebra of a simple conformal vertex algebra $\mathcal{W}(-p)$, analogous to the realization of the Virasoro vertex operator algebra at central charge $13-6p-6p^{-1}$ as the $PSL_2(\CC)$-fixed-point subalgebra of the triplet algebra $\mathcal{W}(p)$.
\end{abstract}

\maketitle

\tableofcontents

\section{Introduction}

The philosophy behind this paper is that commuting actions of two algebraic structures on a third algebraic structure lead to interesting relations among the representation theories of the three algebraic structures under consideration. A classical example is Schur-Weyl duality relating representations of the general linear and symmetric groups. In this work, all three algebraic structures will be vertex operator algebras, which are an approach to the rigorous mathematical study of two-dimensional chiral conformal quantum field theories.

Our specific setting is a vertex operator algebra $A$ (or more generally, a conformal vertex algebra) which contains two vertex subalgebras $U$ and $V$ whose vertex operator actions on $A$ commute with each other and whose conformal vectors add up to the conformal vector of $A$, that is, $A$ contains a vertex operator subalgebra isomorphic to $U\otimes V$. In the language of conformal field theory, $V$ will be the coset of $U$ in $A$ and $U$ will be the 
coset of $V$ in $A$. Our main result is that under fairly general conditions, (sub)categories of $U$- and $V$-modules enjoy a kind of ``mirror duality,'' that is, there is a braid-reversed tensor equivalence relating the $U$-modules that occur in $A$ with the $V$-modules that occur in $A$. Such a result has been proved earlier under stronger assumptions \cite{Lin, CKM2}. Here, although we do assume that the representation theory of $U$ is quite nice, we make fairly minimal assumptions on $V$, mainly just that $V$ has a suitable representation category that admits the vertex algebraic braided tensor category structure of \cite{HLZ1}-\cite{HLZ8}. In full detail, our main theorem is as follows (see Theorem \ref{thm:mirror_equiv} in the main text):

\begin{theo}\label{thm:main_thm}
 Let $U$ and $V$ be simple self-contragredient vertex operator algebras such that:
 \begin{itemize}
  \item There is an injective conformal vertex algebra homomorphism $\iota_A: U\otimes V\rightarrow A$, where $A$ is a simple conformal vertex algebra.
  \item As a $U\otimes V$-module,
  \begin{equation*}
   A\cong\bigoplus_{i\in I} U_i\otimes V_i
  \end{equation*}
where the $U_i$ are distinct simple $U$-modules and the $V_i$ are semisimple $V$-modules. Let $0\in I$ denote the index such that $U_0=U$.

\item The vertex subalgebras $U$ and $V$ form a dual pair in $A$ in the sense that $V_0=V$ and
\begin{equation*}
 \dim \hom_V(V,V_i)=\delta_{i,0}
\end{equation*}
for all $i\in I$.

\item The $U$-modules $U_i$ for $i\in I$ are objects of a locally-finite semisimple braided ribbon tensor category $\cU$ of $U$-modules that is closed under contragredients.

\item The $V$-modules $V_i$ for $i\in I$ are objects of a braided tensor category $\cV$ of grading-restricted generalized $V$-modules which is closed under submodules, quotients, and contragredients.
 \end{itemize}
If $I$ is infinite, let $\cC$ be the category of $U\otimes V$-modules whose objects are finite direct sums of modules $M\otimes W$ for $M$ an object of $\cU$ and $W$ an object of $\cV$, and assume in addition that:
 \begin{itemize}
  \item Every $V$-module in $\cV$ is finitely generated.
  \item For every intertwining operator $\cY$ of type $\binom{X_3}{M_1\otimes W_1\,M_2\otimes W_2}$, where $M_1$ and $M_2$ are $U$-modules in $\cU$, $W_1$ and $W_2$ are $V$-modules in $\cV$, and $X_3$ is a generalized $U\otimes V$-module which is the union of submodules in $\cC$, the image $\im\cY\subseteq X_3$ is an object of $\cC$.
 \end{itemize}
 Let $\cU_A$ (respectively, $\cV_A$) denote the category of $U$-modules (respectively, $V$-modules) whose objects are finite direct sums of the $U_i$ (respectively, of the $V_i$) for $i\in I$. Then:
 \begin{enumerate}
  \item The category $\cU_A$ of $U$-modules is a tensor subcategory of $\cU$.
  
  \item The category $\cV_A$ of $V$-modules is a semisimple ribbon tensor subcategory of $\cV$ with distinct simple objects $\lbrace V_i\rbrace_{i\in I}$. In particular, $\cV_A$ is rigid.
  
  \item There is a braid-reversed tensor equivalence $\tau: \cU_A\rightarrow\cV_A$ such that $\tau(U_i)\cong V_i'$ for $i\in I$, where $V_i'$ is the contragredient of $V_i$.
 \end{enumerate}
\end{theo}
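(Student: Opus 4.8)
The plan is to realize $A$ as a commutative algebra in a suitable braided tensor category built from $\cU$ and $\cV$, and then exploit the theory of vertex algebra extensions (à la Huang--Kirillov--Lepowsky and Creutzig--Kanade--McRae) together with the dual-pair hypothesis to identify the category of local $A$-modules with the image of a braid-reversed functor. First I would observe that since $U$ and $V$ act commutingly on $A$ and their conformal vectors add, the Deligne-type product $\cU\boxtimes\cV$ (or, when $I$ is infinite, the category $\cC$ of finite direct sums of $M\otimes W$) carries a braided tensor category structure, and the hypotheses on images of intertwining operators guarantee that this structure is well-behaved on the subcategory generated by the $U_i\otimes V_i$. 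Then $A=\bigoplus_{i\in I}U_i\otimes V_i$ is a (possibly infinite, but with finite-dimensional multiplicity spaces) object of $\cC$, and the conformal vertex algebra structure on $A$ extending $\iota_A$ makes $A$ a commutative, haploid (since $A$ is simple, so $\hom(U\otimes V, A)$ is one-dimensional by the dual-pair condition) associative algebra in this braided category.

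Next I would analyze $\rep^0 A$, the category of local $A$-modules, via induction $\ind_A = A\boxtimes(-)$. The key structural input is that $A$, viewed over $U\otimes V$, decomposes with the $U_i$ pairwise non-isomorphic and the $V_i$ satisfying $\dim\hom_V(V,V_i)=\delta_{i,0}$; this rigidity of the decomposition forces the algebra multiplication to be ``diagonal'' in a strong sense and is exactly what lets one run the argument that $\ind_A(U_i\otimes V)$ and $\ind_A(U\otimes V_i)$ are simple and that their further decomposition matches up the $U_i$ with the $V_i$. Concretely, one shows $\ind_A(U_i\otimes V)\cong\ind_A(U\otimes V_i')$ inside $\rep^0 A$, which gives, after taking multiplicity spaces / applying the appropriate forgetful functors, the identification of simple objects. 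Part (1) is then relatively soft: $\cU_A$ is closed under the tensor product of $\cU$ because the fusion rules $\hom_U(U_i\boxtimes U_j, U_k)$ are controlled by the algebra structure on $A$ (any $U_k$ appearing in $U_i\boxtimes U_j$ must appear in $A$ by the non-degeneracy coming from simplicity of $A$ and the dual pair property), and $\cU_A$ inherits all the categorical structure from $\cU$. Part (2) requires showing the $V_i$ are simple and distinct — simplicity from the dual-pair condition applied to $\ind_A(U\otimes V_i)$ and distinctness from the matching with the distinct $U_i$ — after which semisimplicity and rigidity of $\cV_A$ transfer from $\cU_A$ through the equivalence, and the ribbon structure comes from the conformal weights.

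For part (3), the braid-reversed tensor functor $\tau$ should be constructed as the composite $\cU_A\xrightarrow{(-)\otimes V}\ \to\ \rep^0 A\ \to\ \cV_A$, where the first arrow sends $U_i\mapsto U_i\otimes V$ followed by induction, and the last is essentially ``take the multiplicity of $U$'' or equivalently restrict the induced $A$-module and project; the braiding gets reversed because the two tensor factors $\cU$ and $\cV$ sit inside the Müger-type centralizer of one another in $\rep^0 A$ with opposite braidings, a phenomenon already visible in \cite{CKM2, Lin}. That $\tau$ is monoidal follows from the fact that induction is monoidal and the multiplicity-space functor is monoidal on the relevant subcategory; that it is an equivalence follows from fully-faithfulness (injectivity on hom-spaces via the dual-pair dimension count) plus essential surjectivity (every $V_i$ is hit, and $\cV_A$ is generated by the $V_i$). \textbf{The main obstacle} I anticipate is handling the case $I$ infinite and the non-rigidity of $\cV$: one cannot simply invoke the standard rigid extension theory, so the two extra hypotheses (finite generation of objects of $\cV$ and closure of images of intertwining operators under $\cC$) must be used carefully to ensure that $A$ really is an algebra in a braided tensor category to which the induction machinery applies, and that $\ind_A$ lands in a category where objects are still built from the $U_i\otimes V_i$ with control on multiplicities — in other words, making the formal algebra-in-a-category argument legitimate when the ambient category is not semisimple on the $V$ side and is infinite-dimensional is where the real work lies.
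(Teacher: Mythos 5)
Your overall strategy is in the right spirit --- realize $A$ as a commutative algebra in the Deligne-type product $\cC$, pass through $\rep A$ via induction, and come back to $\cV$ via a multiplicity-space/invariants functor --- and this is indeed the route the paper takes (with the invariants functor $\cG(X) = \ker L_U(-1)$ on $X$). But there is a genuine gap at the crux of the argument. You write that monoidality of $\tau$ follows because ``induction is monoidal and the multiplicity-space functor is monoidal on the relevant subcategory.'' That second claim is exactly what has to be \emph{proved}, and it is not soft. The induction $\cF:\cU\to\rep A$ is indeed a strong monoidal functor, but the invariants/multiplicity functor $\cG$ is a priori only \emph{lax} monoidal: there are natural maps $g_{X_1,X_2}:\cG(X_1)\tens_V\cG(X_2)\to\cG(X_1\tens_A X_2)$ with no reason to be isomorphisms. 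In fact, on $\cU\setminus\cU_A$ they are emphatically not, since $\Phi(U_i)=0$ for $i\notin I$ while $\Phi(U_i\tens_U U_i')\neq 0$. The bulk of the paper's proof (Lemma \ref{lem:J_surj_enough} and Theorem \ref{thm:J_iso}) is devoted to showing that the lax structure maps $J_{M_1,M_2}$ become isomorphisms when $M_1\in\cU_A$. Because $\cV$ is not assumed rigid, one cannot run the standard argument that would make this automatic; the technical point is a delicate ``surjective enough'' statement --- that $\im J_{U_j,U_j'}$ contains the canonical copy of $V$ coming from the coevaluation $i_{U_j}$ --- which is established by an explicit, lengthy computation in $\cC$ using Lemma \ref{lem:prime_involution}. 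Your proposal does not anticipate this issue, and the appeal to ``fully-faithfulness via the dual-pair dimension count'' does not substitute for it.

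A second, smaller problem: you assert that ``semisimplicity and rigidity of $\cV_A$ transfer from $\cU_A$ through the equivalence.'' This is circular as stated. The paper needs to establish rigidity of $\cV_A$ \emph{before} the braid-reversed equivalence can be invoked, because the equivalence is obtained by citing a theorem from \cite{CKM2} that assumes rigidity on both sides. The correct order is: prove $J_{U_i,U_i'}$ is an isomorphism, use it to transport the evaluation/coevaluation of $U_i$ across $\Phi$ to produce duality morphisms for $\Phi(U_i)\cong V_{i'}$, conclude $\cV_A$ is rigid, and only then quote the gluing/mirror-duality result of \cite{CKM2} to obtain the braid-reversed functor $\tau$. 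You also do not need, and should not claim, that $\tau$ factors through $\rep^0 A$ (local modules); the paper works with all of $\rep A$, which is enough because the passage back to $\cV$ is via invariants rather than via the local-module dictionary.
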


See Theorem \ref{thm:superalg_mirror_equiv} for the generalization to the case that $A$ is a conformal vertex superalgebra, in which case $\cU_A$ and $\cV_A$ are only braid-reversed equivalent up to certain sign factors in the braiding isomorphisms. In case the index set $I$ is infinite in Theorem \ref{thm:main_thm}, the intertwining operator condition in terms of the category $\cC$ of $U\otimes V$-modules can be replaced by sufficient conditions depending only on $\cU$ and $\cV$ individually (see Proposition \ref{prop:IndC_intw_op_cond}).

The asymmetry between the assumptions on $\cU$ and $\cV$ in Theorem \ref{thm:main_thm} sets this result apart from earlier work, especially part (2) of \cite[Main Theorem 3]{CKM2}. While we assume here that the category $\cU$ of $U$-modules has many of the nice properties of the finite-dimensional representation theory of a compact group, especially semisimplicity and rigidity (existence of duals in a strong sense), we emphatically do not assume rigidity for $\cV$. Although rigidity is highly useful in the study of tensor categories, it is notoriously difficult to prove for vertex algebraic tensor categories: so far, almost the only general result is Huang's rigidity theorem \cite{Hu-rig-mod} for strongly rational vertex operator algebras, so that for non-rational examples, rigidity usually must be proved on a case-by-case basis.

Consequently, we envision Theorem \ref{thm:main_thm} as a tool for transferring nice properties, especially rigidity, from the representation category of a well-understood vertex operator algebra $U$ to that of a new vertex operator algebra $V$. Such results have been achieved in the easier partially classical setting of commuting actions of a compact group $G$ and its fixed-point vertex operator subalgebra $V^G$ on a simple vertex operator algebra $V$. In this case, the analogue of Theorem \ref{thm:main_thm}, proved in \cite{McR}, states that the category of finite-dimensional $G$-modules is tensor equivalent to the semisimple subcategory of $V^G$-modules whose simple objects occur as $V^G$-submodules of $V$. In \cite{McR, MY}, this theorem was used to transfer rigidity from finite-dimensional $SU(2)$-modules to many irreducible modules for Virasoro vertex operator algebras at central charges $13-6p-6p^{-1}$, $p\in\ZZ_+$.

We expect that it will be possible to establish rigidity for many more vertex operator algebras using cosets and Theorem \ref{thm:main_thm}. In fact, since the first version of this paper was completed, Creutzig, Genra, and Linshaw used Theorem \ref{thm:main_thm} to prove rigidity for the grading-restricted module category of the simple affine vertex operator algebra $L_k(\mathfrak{sp}_{2n})$ at principal and coprincipal admissible levels $k$ \cite[Corollary 4.2(1)]{CGL}. They proved this by realizing $L_k(\mathfrak{sp}_{2n})$ as the coset of a rational $W$-algebra of type $C$ inside an admissible-level affine vertex operator superalgebra of type $\mathfrak{osp}_{1\vert 2n}$. Previously,  tensor categories for affine vertex operator algebras at admissible levels were known to be rigid only in simply-laced types \cite{CHY, Cr1} and a few cases in type $B$ \cite{CKoL}.

In this paper, we apply Theorem \ref{thm:main_thm} to the Virasoro vertex operator algebras $V_{13+6p+6p^{-1}}$ at central charges $13+6p+6p^{-1}$, $p\in\ZZ_{\geq 2}$. It was shown in \cite{CJORY} that the category $\cO_{-p}$ of $C_1$-cofinite grading-restricted generalized $V_{13+6p+6p^{-1}}$-modules admits the braided tensor category structure of \cite{HLZ1}-\cite{HLZ8}. So far, the detailed structure of $\cO_{-p}$ is unknown; in particular, it is not known if $\cO_{-p}$ is rigid. However, it is shown in \cite{Ar} that there is a simple conformal vertex algebra extension of $V_{13-6p-6p^{-1}}\otimes V_{13+6p+6p^{-1}}$, called the chiral universal centralizer of $SL_2$ at level $k=-2+\frac{1}{p}$, that satisfies the conditions of Theorem \ref{thm:main_thm}. Moreover, $V_{13-6p-6p^{-1}}$ admits a suitable rigid tensor category of modules \cite{MY}. Thus applying Theorem \ref{thm:main_thm}, we obtain (see Theorem \ref{thm:main_thm_app} and Corollary \ref{cor:O_-p_L_fus_rules} in the main text):
\begin{theo}\label{thm:intro_Vir_app}
For $p\in\ZZ_{\geq 2}$, let $\cO_{\pm p}^L$ denote the semisimple categories of $V_{13\mp 6p \mp 6p^{-1}}$-modules with simple objects $\cL_{r,1}^{\pm p}$, $r\in\ZZ_+$, where $\cL_{r,1}^{\pm p}$ has lowest conformal weight $\pm\frac{r^2-1}{4}p-\frac{r-1}{2}$.
\begin{enumerate}
\item The category $\cO_{-p}^L$ is a rigid tensor subcategory of $\cO_{-p}$, and there is a braid-reversed tensor equivalence $\tau:\cO_p^L\rightarrow\cO_{-p}^L$ such that $\tau(\cL_{r,1}^p)\cong\cL_{r,1}^{-p}$ for all $r\in\ZZ_+$.

\item The following tensor product formula holds in $\cO_{-p}$: for $r,r'\in\ZZ_+$,
\begin{equation*}
 \cL_{r,1}^{-p}\tens\cL_{r',1}^{-p}\cong\bigoplus_{\substack{k=\vert r-r'\vert+1\\ k+r+r'\equiv 1\,(\mathrm{mod}\,2)}}^{r+r'-1} \cL_{k,1}^{-p}.
\end{equation*}
\end{enumerate}
\end{theo}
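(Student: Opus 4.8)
The plan is to derive both parts from Theorem~\ref{thm:main_thm}, applied with $U=V_{13-6p-6p^{-1}}$, $V=V_{13+6p+6p^{-1}}$, and $A$ the chiral universal centralizer of $SL_2$ at level $k=-2+p^{-1}$. Both $U$ and $V$ are simple self-contragredient vertex operator algebras, being simple Virasoro vacuum modules at the indicated central charges. By \cite{Ar}, $A$ is a simple conformal vertex algebra carrying an injective conformal vertex algebra homomorphism $U\otimes V\rightarrow A$ for which, as a $U\otimes V$-module, $A\cong\bigoplus_{r\in\ZZ_+}\cL_{r,1}^{p}\otimes\cL_{r,1}^{-p}$, where the $\cL_{r,1}^{p}$ are distinct simple $U$-modules, the $\cL_{r,1}^{-p}$ are simple $V$-modules, and $\cL_{1,1}^{p}=U$, $\cL_{1,1}^{-p}=V$. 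Since the lowest conformal weights $-\frac{r^2-1}{4}p-\frac{r-1}{2}$ of the $\cL_{r,1}^{-p}$ are pairwise distinct, these modules are distinct as well, so the dual pair condition $\dim\hom_V(V,\cL_{r,1}^{-p})=\delta_{r,1}$ holds and $U=\com_A(V)$; thus $U$, $V$, $A$ satisfy all the hypotheses of Theorem~\ref{thm:main_thm} that refer to them.

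For the module categories, I would take $\cU=\cO_p^L$ and $\cV=\cO_{-p}$. By \cite{MY}, $\cO_p^L$ is a locally-finite semisimple rigid braided tensor category of $V_{13-6p-6p^{-1}}$-modules containing every $\cL_{r,1}^{p}$; it is closed under contragredients because each simple object $\cL_{r,1}^{p}$ is self-contragredient, and together with the twist $e^{2\pi i L_0}$ it is a ribbon category. By \cite{CJORY}, the category $\cO_{-p}$ of $C_1$-cofinite grading-restricted generalized $V_{13+6p+6p^{-1}}$-modules admits the braided tensor category structure of \cite{HLZ1}-\cite{HLZ8}, is closed under submodules, quotients, and contragredients, and contains every $\cL_{r,1}^{-p}$. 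Since the index set $\ZZ_+$ is infinite, one must also note that every object of $\cO_{-p}$ is finitely generated, being $C_1$-cofinite, and verify the intertwining operator hypothesis; for the latter I would invoke Proposition~\ref{prop:IndC_intw_op_cond}, which reduces this hypothesis to properties of $\cO_p^L$ and $\cO_{-p}$ individually that are provided by the rigidity of $\cO_p^L$ and by the closure of $\cO_{-p}$ under the relevant subquotients and tensor products from \cite{CJORY}.

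Theorem~\ref{thm:main_thm} then applies and gives part~(1): the category $\cO_{-p}^L$ of finite direct sums of the $\cL_{r,1}^{-p}$ is a semisimple rigid ribbon tensor subcategory of $\cO_{-p}$ with distinct simple objects, and there is a braid-reversed tensor equivalence $\tau:\cO_p^L\rightarrow\cO_{-p}^L$ with $\tau(\cL_{r,1}^{p})\cong(\cL_{r,1}^{-p})'\cong\cL_{r,1}^{-p}$, the last isomorphism holding because simple Virasoro modules are self-contragredient. For part~(2), recall from \cite{MY} that the fusion rules in $\cO_p^L$ are the $\mathfrak{sl}_2$ Clebsch--Gordan rules: $\cL_{r,1}^{p}\tens\cL_{r',1}^{p}$ is the direct sum of the modules $\cL_{k,1}^{p}$ over $k=|r-r'|+1,|r-r'|+3,\dots,r+r'-1$. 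Since $\tau$ is in particular a tensor functor, $\cO_{-p}^L$ is a tensor subcategory of $\cO_{-p}$, and $\tau(\cL_{s,1}^{p})\cong\cL_{s,1}^{-p}$, applying $\tau$ to this decomposition yields $\cL_{r,1}^{-p}\tens\cL_{r',1}^{-p}\cong\bigoplus_{k}\cL_{k,1}^{-p}$ over the same range of $k$, which is the asserted formula.

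The concluding deduction is essentially formal once the inputs are assembled, so the real work lies in those inputs: extracting from \cite{Ar} the precise $U\otimes V$-module decomposition of the chiral universal centralizer together with the simplicity, distinctness, and commutant statements; citing \cite{MY} for the full braided ribbon and, crucially, rigid structure of $\cO_p^L$; and, because the index set is infinite, discharging the intertwining operator condition through Proposition~\ref{prop:IndC_intw_op_cond}. The last of these is the only point genuinely specific to this situation rather than a citation, and is where I would expect the main technical effort.
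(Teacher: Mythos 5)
Your proposal follows essentially the same route as the paper: same choice of $U=V_{c(p)}$, $V=V_{c(-p)}$, $A=\mathbf{I}_{SL_2}^k$ with $k=-2+p^{-1}$, same categories $\cU=\cO_p^L$ (from \cite{MY}) and $\cV=\cO_{-p}$ (from \cite{CJORY}), and part~(2) derived by transporting the $\mathfrak{sl}_2$ Clebsch--Gordan rules for $\cO_p^L$ through the braid-reversed equivalence. The one point worth tightening is your handling of the intertwining operator hypothesis: Proposition~\ref{prop:IndC_intw_op_cond} does not reduce it merely to ``closure of $\cO_{-p}$ under subquotients and tensor products''---it additionally requires that $\cO_{-p}$ \emph{itself} satisfy the intertwining operator condition of Assumption~\ref{assum:inf_order}(2), i.e.\ that images of intertwining operators into objects of $\ind(\cO_{-p})$ land in $\cO_{-p}$. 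That is a nontrivial result proved in \cite[Corollary~2.12]{CMY} using the $C_1$-cofiniteness of the category, and the paper cites it explicitly; your description of what needs to be checked for $\cO_{-p}$ misattributes the source. Once that citation is in place your argument matches the paper's Theorem~\ref{thm:main_thm_app} and Corollary~\ref{cor:O_-p_L_fus_rules}.
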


The $p=1$ version of this theorem has already been proved in \cite{MY2}.  For $p\geq 2$, not all simple objects of $\cO_{-p}$ appear in $\cO_{-p}^L$, so Theorem \ref{thm:intro_Vir_app} is just the beginning of a detailed understanding of the structure of $\cO_{-p}$, which will be completed in future work. The $\mathfrak{sl}_2$-type fusion rules in Theorem \ref{thm:intro_Vir_app}(2) follow from the fact (proved in \cite[Theorem 4.3]{MY}) that $\cO_p^L$ is braided tensor equivalent to an abelian $3$-cocycle twist of the category $\rep\mathfrak{sl}_2$ of finite-dimensional $\mathfrak{sl}_2$-modules. Thus by Theorem \ref{thm:intro_Vir_app}(1), $\cO_{-p}^L$ is also an abelian $3$-cocycle twist of $\rep\mathfrak{sl}_2$ (with the inverse braiding as for $\cO_p^L$).

As an application of Theorem \ref{thm:intro_Vir_app}, we construct new simple conformal vertex algebras at central charge $13+6p+6p^{-1}$: by applying the braid-reversed tensor equivalence $\cO_p^L\rightarrow\cO_{-p}^L$ to the triplet $W$-algebra $\cW(p)$ \cite{AdM} and the singlet $W$-algebra $\cM(p)$ \cite{Ad}, which are simple vertex operator algebra extensions of $V_{13-6p-6p^{-1}}$, we get corresponding conformal vertex algebra extensions of $V_{13+6p+6p^{-1}}$ (see Theorem \ref{thm:W(-p)} and Proposition \ref{prop:M(-p)} in the main text):
\begin{theo}\label{thm:intro_W(-p)_M(-p)}
Let $p\in\ZZ_{\geq 2}$, and let $V(n)$ be the $(n+1)$-dimensional simple $\mathfrak{sl}_2$-module.
\begin{enumerate}
\item There is a unique simple conformal vertex algebra $\cW(-p)$ of central charge $13+6p+6p^{-1}$ such that as a $V_{13+6p+6p^{-1}}$-module,
\begin{equation}\label{eqn:intro_W(-p)_decomp}
\cW(-p)\cong\bigoplus_{n=0}^\infty V(2n)\otimes\cL_{2n+1,1}^{-p}.
\end{equation}
Moreover, $PSL_2(\CC)$ acts on $\cW(-p)$ by conformal vertex algebra automorphisms such that \eqref{eqn:intro_W(-p)_decomp} also gives the decomposition of $\cW(-p)$ as a $PSL_2(\CC)$-module.

\item  There is a simple conformal vertex algebra $\cM(-p)$ of central charge $13+6p+6p^{-1}$ such that as a $V_{13+6p+6p^{-1}}$-module,
\begin{equation*}
\cM(-p)\cong\bigoplus_{n=0}^\infty \cL_{2n+1,1}^{-p}.
\end{equation*}
\end{enumerate}
\end{theo}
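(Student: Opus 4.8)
\emph{Proof strategy.} The plan is to construct $\cW(-p)$ and $\cM(-p)$ by transporting the triplet and singlet vertex operator algebras through the braid-reversed tensor equivalence $\tau\colon\cO_p^L\rightarrow\cO_{-p}^L$ of Theorem \ref{thm:intro_Vir_app}(1). On the ``$+p$'' side one uses the known structure of these algebras: the triplet algebra $\cW(p)$ of \cite{AdM} is a simple vertex operator algebra on which $PSL_2(\CC)$ acts by automorphisms with fixed-point subalgebra $V_{13-6p-6p^{-1}}$ and with $\cW(p)\cong\bigoplus_{n\geq 0}V(2n)\otimes\cL^p_{2n+1,1}$ as a $PSL_2(\CC)\times V_{13-6p-6p^{-1}}$-module, while the singlet algebra $\cM(p)$ of \cite{Ad} is a simple vertex operator algebra extension of $V_{13-6p-6p^{-1}}$ with $\cM(p)\cong\bigoplus_{n\geq 0}\cL^p_{2n+1,1}$; in both cases the summands $\cL^p_{2n+1,1}$ are simple objects of the rigid semisimple braided ribbon category $\cO_p^L$ of \cite{MY}.

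I would then invoke the correspondence between (conformal) vertex algebra extensions and haploid commutative algebra objects in braided tensor categories of modules (\cite{CKM2}, with the group-equivariant version coming from the orbifold theory of \cite{McR}), applied in ind-completions to accommodate the infinite direct sums. Under this dictionary $\cW(p)$ together with its $PSL_2(\CC)$-action corresponds to a haploid commutative algebra $\cA_p$ in the ind-completion of $\rep(PSL_2(\CC))\tens\cO_p^L$ with underlying object $\bigoplus_{n\geq 0}V(2n)\tens\cL^p_{2n+1,1}$, and $\cM(p)$ corresponds to one in the ind-completion of $\cO_p^L$ with underlying object $\bigoplus_{n\geq 0}\cL^p_{2n+1,1}$. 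Since $\rep(PSL_2(\CC))$ is symmetric, $\tau$ induces a braided tensor equivalence $\Id\tens\tau$ from $\rep(PSL_2(\CC))\tens\cO_p^L$ onto $\rep(PSL_2(\CC))\tens\cO_{-p}^L$ that reverses the braiding; commutativity of an algebra is unaffected by reversing the braiding, and $\tau(\cL^p_{r,1})\cong\cL^{-p}_{r,1}$ with the abelian $3$-cocycle twist trivial on the integer-spin subcategory $\rep(PSL_2(\CC))$, so the images of these two algebra objects are again haploid commutative, now with underlying objects $\bigoplus_{n\geq 0}V(2n)\tens\cL^{-p}_{2n+1,1}$ and $\bigoplus_{n\geq 0}\cL^{-p}_{2n+1,1}$. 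Running the dictionary backwards produces a conformal vertex algebra $\cW(-p)$ carrying a $PSL_2(\CC)$-action, and a conformal vertex algebra $\cM(-p)$, each extending $V_{13+6p+6p^{-1}}$, with the stated decompositions; note $\cW(-p)$ is only a conformal vertex algebra and not a vertex operator algebra, since the lowest weights $-n(n+1)p-n$ of its summands are unbounded below.

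For simplicity, one argues that a braided tensor equivalence preserves the poset of ideals of an algebra object, and the categorical condition characterizing when such an algebra yields a simple extension, so simplicity of $\cW(-p)$ and $\cM(-p)$ follows from that of $\cW(p)$ and $\cM(p)$. For uniqueness in part (1): if $\cW$ is any simple conformal vertex algebra with the decomposition \eqref{eqn:intro_W(-p)_decomp}, then it corresponds to a haploid commutative algebra structure on the fixed object $\bigoplus_{n\geq 0}V(2n)\tens\cL^{-p}_{2n+1,1}$ in the ind-completion of $\rep(PSL_2(\CC))\tens\cO_{-p}^L$; because the fusion rules of $\cO_{-p}^L$ are the multiplicity-free $\mathfrak{sl}_2$ fusion rules of Theorem \ref{thm:intro_Vir_app}(2) and $PSL_2(\CC)$-equivariance fixes the remaining Clebsch--Gordan scalars, there is only one such structure up to isomorphism, whence $\cW\cong\cW(-p)$.

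The main obstacle, rather than the essentially formal transport along $\tau$, is making the correspondence between conformal vertex algebra extensions and commutative algebra objects precise and applicable here: the algebras are infinite direct sums, which forces one into an ind-completion of the vertex-algebraic braided tensor category, and $\cW(-p)$ has conformal weights unbounded below with infinite-dimensional weight spaces, placing it outside the usual grading-restricted framework, so one must verify that the vertex-algebraic and braided-categorical structures still correspond. Establishing this dictionary, together with the $PSL_2(\CC)$-equivariant refinement needed for part (1), is the technical heart of the argument.
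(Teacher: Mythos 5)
Your construction of $\cW(-p)$ and $\cM(-p)$ follows the paper's proof of Theorem \ref{thm:W(-p)} and Proposition \ref{prop:M(-p)} essentially verbatim: set $\cW(-p)=T(\cW(p))$ and $\cM(-p)=T(\cM(p))$ for the ind-extension $T$ of $\tau$, transport the $PSL_2(\CC)$-action by applying $T$ to automorphisms of $\cW(p)$, and pass between conformal vertex algebras and commutative algebra objects via \cite{HKL}, \cite[Theorem~5.7]{CKM2}, \cite[Theorem~7.5]{CMY}, with simplicity carried over because a tensor equivalence preserves ideals. The technical worry you flag at the end — whether the dictionary survives infinite direct sums with conformal weights unbounded below — is precisely what the cited results in \cite{CKM2,CMY} address, so this is not an actual obstacle.

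There is, however, a gap in your uniqueness argument for part (1). You claim that any simple conformal vertex algebra $\cW$ with the given $V_{c(-p)}$-module decomposition "corresponds to a haploid commutative algebra structure on the fixed object $\bigoplus_{n\geq 0} V(2n)\tens\cL^{-p}_{2n+1,1}$ in the ind-completion of $\rep PSL_2(\CC)\tens\cO_{-p}^L$." But the hypothesis only gives a $V_{c(-p)}$-module decomposition, so $\cW$ is \emph{a priori} a commutative algebra in $\ind(\cO_{-p}^L)$ with underlying object $\bigoplus_{n\geq 0}(2n+1)\,\cL^{-p}_{2n+1,1}$; placing it in a $PSL_2(\CC)$-equivariant category presupposes the $PSL_2(\CC)$-action whose existence and compatibility one is trying to deduce. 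Even granting equivariance, "$PSL_2(\CC)$-equivariance fixes the remaining Clebsch--Gordan scalars" is an assertion, not a proof — the scalars must satisfy associativity, commutativity, and unitality constraints, and one must show the solution is unique up to isomorphism. The paper avoids this entirely: it notes that the semisimple symmetric tensor subcategory of $\ind(\cO_{-p}^L)$ generated by the $\cL^{-p}_{2n+1,1}$ is equivalent, as a symmetric tensor category, to the analogous subcategory of $\ind(\cO_1)$ (both being the ind-completion of $\rep PSL_2(\CC)$; see \cite[Example~4.12]{McR}), so that uniqueness for $\cW(-p)$ is equivalent to uniqueness of the simple vertex operator algebra extension of $V_1$ with decomposition $\bigoplus_{n\geq 0} V(2n)\otimes\cL^1_{2n+1,1}$, which is known to be the $\mathfrak{sl}_2$-root-lattice vertex operator algebra by \cite[Theorem~B.1]{MY2}. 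You should replace your Clebsch--Gordan sketch with this transfer argument, or else supply the missing derivation of the $PSL_2(\CC)$-action together with a genuine rigidity computation for the structure constants.
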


Part (1) of this theorem shows in particular that $V_{13+6p+6p^{-1}}$ is the $PSL_2(\CC)$-fixed-point subalgebra of a simple conformal vertex algebra. Again, the $p=1$ version was proved in \cite{MY2}, where we also showed that $\cW(-1)$ has a tensor category of modules that is braid-reversed tensor equivalent to the modular tensor category of modules for the $\mathfrak{sl}_2$-root lattice vertex operator algebra. For $p\geq 2$, we conjecture that the category of $\cW(-p)$-modules is braid-reversed tensor equivalent to that of $\cW(p)$-modules, and that $\cM(-p)$ admits a tensor category of modules which is braid-reversed equivalent to a certain category of $\cM(p)$-modules constructed in \cite{CMY-sing}. These categories of $\cW(p)$- and $\cM(p)$-modules, in turn, are tensor equivalent to module (sub)categories for suitable versions of quantum $\mathfrak{sl}_2$ at the root of unity $q=e^{\pi i/p}$ \cite{GN}. Proving these conjectures for $\cW(-p)$ and $\cM(-p)$ will require a fuller understanding of $\cO_{-p}$ and its relation to $\cO_p$.

In the future, it may be possible to apply Theorem \ref{thm:main_thm} to the higher-rank chiral universal centralizer algebras of \cite[Section 7]{Ar} to generalize Theorem \ref{thm:intro_Vir_app} to higher-rank $W$-algebras. In particular, it may be possible to relate (sub)categories of modules for higher-rank affine $W$-algebras at certain positive shifted levels with (sub)categories of modules at corresponding negative levels. However, this will first require proving the existence of tensor category structure on suitable module categories for higher-rank affine $W$-algebras, as well as proving rigidity for suitable modules at positive shifted levels.

We now discuss the proof of Theorem \ref{thm:main_thm}. We construct a functor from $\cU$ to $\cV$ as follows: First, given a $U$-module $M$ which is an object of $\cU$, the vector space tensor product $M\otimes V$ is an object of the Deligne product tensor category $\cC=\cU\tens\cV$. Now since $A$ is a commutative algebra in $\cC$ (or its ind-completion if $A$ is an infinite-order extension of $U\otimes V$) by \cite{HKL}, we have an induction functor from $\cC$ to a category $\rep A$ of $A$-modules (in a generalized sense) in $\cC$; see \cite{KO,CKM1} for details. Induction takes $M\otimes V$ to the $U\otimes V$-module $A\tens_\cC(M\otimes V)$, which admits a left $A$-action induced by the vertex operator for $A$ and associativity of the tensor product on $\cC$. Finally, we obtain a $V$-module (in $\cV$) from $A\tens_\cC (M\otimes V)$ by taking $U$-invariants, which is the subspace of $U$-vacuum-like vectors in $A\tens_\cC(M\otimes V)$; equivalently this is the kernel of the Virasoro operator $L_U(-1)$ associated to $U$ acting on $A\tens_\cC(M\otimes V)$. We let $\Phi$ denote the functor from $\cU$ to $\cV$ which takes $M$ to the $U$-invariants of $A\tens_\cC(M\otimes V)$.

Using the universal property of vertex algebraic tensor products, given in terms of intertwining operators, it is straightforward to show that there is a functorial homomorphism
\begin{equation*}
 J_{M_1,M_2}: \Phi(M_1)\tens_V\Phi(M_2)\longrightarrow\Phi(M_1\tens_U M_2)
\end{equation*}
for $U$-modules $M_1$, $M_2$ in $\cU$ that is compatible with the unit and associativity isomorphisms in $\cU$ and $\cV$. That is, $\Phi$ is a lax monoidal functor from $\cU$ to $\cV$. Most of the proof of Theorem \ref{thm:main_thm} then focuses on showing that $J_{M_1,M_2}$ is actually an isomorphism when $M_1$ is an object of the subcategory $\cU_A$ of $U$-modules whose summands appear in $A$, so that $\Phi$ defines a strong monoidal functor from $\cU_A$ to $\cV_A$.

To prove that $J_{M_1,M_2}$ is injective when $M_1$ is an object of $\cU_A$, the idea is to use the same categorical argument as was used in \cite[Theorem 4.7]{McR} to prove the analogous result for fixed-point vertex operator subalgebras of compact automorphism groups. However, this proof in \cite{McR} used surjectivity of $J_{M_1,M_2}$, which is not so easy to prove directly in the coset setting. Nevertheless, the argument still works as long as $J_{M_1,M_1'}$, where $M_1'$ is the contragredient of $M_1$, is ``surjective enough''  in the sense that the image of $J_{M_1,M_1'}$ in $\Phi(M_1\tens_U M_1')$ contains the copy of $\Phi(U)\cong V$ associated to the coevaluation $i_{M_1}: U\rightarrow M_1\tens_U M_1'$. The proof of this result on $\mathrm{Im}\,J_{M_1,M_1'}$ (in Lemma \ref{lem:J_surj_enough} below) is primarily categorical in nature and is one of the most technical parts of the proof of Theorem \ref{thm:main_thm}.

Once we have proved that $J_{M_1,M_1'}$ is surjective enough, the argument of \cite[Theorem 4.7]{McR} shows that $J_{M_1,M_2}$ is injective. Then a similar categorical argument, using injectivity, shows that $J_{M_1,M_2}$ is also surjective and thus an isomorphism. This proves that $\Phi:\cU_A\rightarrow\cV_A$ is a strong monoidal functor, and using this, we show that $\cV_A$ is rigid. We could then continue to prove that $\Phi$ is a braid-reversed tensor equivalence, and in particular is suitably compatible with the (inverse) braiding isomorphisms in $\cU_A$ and $\cV_A$. However, at this point, it is more convenient to simply quote a weaker version of Theorem \ref{thm:main_thm}, which was proved in \cite{CKM2} under the \textit{a priori} assumption that $\cV_A$ is rigid, to obtain the braid-reversed equivalence.

The remainder of this paper is structured as follows. Section \ref{sec:prelim} recalls basic definitions, notations, and results in tensor categories and vertex operator algebras. Subsection \ref{subsec:cosets} in particular contains some general results on how $U\otimes V$-modules decompose as modules for $U$ and $V$, and especially on how a vertex operator algebra $A$ decomposes as a module for mutual commutant subalgebras $U$ and $V$. Although these results are more or less known, for convenience we provide proofs in Appendix \ref{app:gen_coset_proofs}. In Section \ref{sec:setting}, we state our basic assumptions on the vertex operator algebras $U$, $V$, and $A$, and we review the theory of (possibly infinite-order) vertex operator algebra extensions \cite{CKM1, CMY} as it pertains to coset-type extensions. The technical Proposition \ref{prop:IndC_intw_op_cond}, on the conditions needed to apply Theorem \ref{thm:main_thm} in the infinite-order extension case, is proved in Appendix \ref{app:intw_op_proof}. We prove Theorem \ref{thm:main_thm} in Section \ref{sec:proofs}; we generalize Theorem \ref{thm:main_thm} to the case that $A$ is a vertex operator superalgebra in Subsection \ref{subsec:VOSAs}. Finally, in Section \ref{sec:Vir_app}, we apply Theorem \ref{thm:main_thm} to Virasoro tensor categories, proving Theorems \ref{thm:intro_Vir_app} and \ref{thm:intro_W(-p)_M(-p)}.

\medskip

\noindent\textbf{Acknowledgments.} I would like to thank Shashank Kanade for discussions on the invariants functor in Subsection \ref{subsec:inv_functor}. I would like to thank Thomas Creutzig for the idea used to handle superalgebra extensions in Subsection \ref{subsec:VOSAs} and for informing me about the application of Theorem \ref{thm:main_thm} to admissible-level affine tensor categories in type $C$, which has now appeared in \cite{CGL}. I would also like to thank Tomoyuki Arakawa for informing me about his construction of the chiral universal centralizer algebras, which motivated the addition of Theorems \ref{thm:intro_Vir_app} and \ref{thm:intro_W(-p)_M(-p)} to this paper.

\section{Preliminaries}\label{sec:prelim}

In this section, we discuss basic notions and results from tensor categories, tensor categories of modules for a vertex operator algebra, and coset vertex operator algebras.

\subsection{Tensor categories}

For references on tensor categories, see for example \cite{Ka,BK,EGNO}, but note that these references use the term ``tensor category'' for somewhat variant notions. In this paper, ``tensor category'' will mean a monoidal category $\cC$ with tensor product bifunctor $\tens$ and unit object $\vac$ such that:
\begin{itemize}
 \item The category $\cC$ is $\mathbb{F}$-linear abelian where $\mathbb{F}=\Endo_\cC(\vac)$ is a field.
 
 \item The tensor product bifunctor $\tens$ is bilinear, and
 
 \item For any object $X$ in $\cC$, the functors $\bullet\tens X$ and $X\tens\bullet$ are right exact.
\end{itemize}
For the tensor categories in this paper, the field $\mathbb{F}$ will be $\CC$. We will use $l$ and $r$ to denote the left and right unit isomorphisms of a tensor category and $\cA$ to denote the natural associativity isomorphisms. If a tensor category is braided, we will use $\cR$ to denote the natural braiding isomorphisms. In this preliminary section, we will use $\vac$ to denote the unit object of a tensor category; in future sections, the unit object will be a vertex operator algebra denoted by a suitable letter and $\vac$ will denote the vacuum vector of the vertex operator algebra.

A \textit{left dual} of an object $X$ in a tensor category is an object $X^*$ together with an \textit{evaluation} morphism $e_X: X^*\tens X\rightarrow\vac$ and a \textit{coevaluation} morphism $i_X: \vac\rightarrow X\tens X^*$ such that the rigidity axioms hold: the composition
\begin{equation*}
 X\xrightarrow{l_X^{-1}} \vac\tens X\xrightarrow{i_X\tens\Id_X} (X\tens X^*)\tens X\xrightarrow{\cA_{X,X^*,X}^{-1}} X\tens(X^*\tens X)\xrightarrow{\Id_X\tens e_X} X\tens\vac\xrightarrow{r_X} X
\end{equation*}
equals $\Id_X$ and the composition
\begin{equation*}
 X^*\xrightarrow{r_{X^*}^{-1}} X^* \tens \vac\xrightarrow{\Id_{X^*}\tens i_{X}} X^*\tens (X\tens X^*)\xrightarrow{\cA_{X^*,X,X^*}} (X^*\tens X)\tens X^*\xrightarrow{e_{X}\tens\Id_{X^*}} \vac\tens X^*\xrightarrow{l_{X^*}} X^*
\end{equation*}
equals $\Id_{X^*}$. There is also a notion of right dual, but we will not need this because we will only consider tensor categories in which left duals and right duals are isomorphic. From now on, we shall call left duals simply \textit{duals}. A tensor category is \textit{rigid} if every object has a dual.

We will need to consider tensor categories that are not rigid, or at least are not known \textit{a priori} to be rigid. However, objects in these tensor categories will still satisfy a partial version of rigidity:
\begin{defi}\label{def:contragredient}
 Given an object $X$ in an $\mathbb{F}$-linear tensor category $\cC$, we say that $X'$ is a \textit{contragredient} of $X$ if there is a natural isomorphism
\begin{equation*}
 \hom_\cC(\bullet\tens X,\vac)\xrightarrow{\Gamma} \hom_\cC(\bullet, X')
\end{equation*}
of contravariant functors from $\cC$ to $\cV ec_\mathbb{F}$.
\end{defi}
 Thus if $X'$ is a contragredient of an object $X$ in $\cC$, then there is a linear isomorphism $\Gamma_Y: \hom_\cC(Y\tens X,\vac)\rightarrow\hom_\cC(Y,X')$ for any object $Y$ in $\cC$ such that for any morphism $f: Y\rightarrow\til{Y}$ in $\cC$, the diagram
\begin{equation}\label{eqn:contra_nat_iso}
 \xymatrixcolsep{4pc}
\begin{matrix}
  \xymatrix{
 \hom_\cC(Y\tens X,\vac) \ar[r]^{\Gamma_Y} & \hom_\cC(Y,X') \\
 \hom_\cC(\til{Y}\tens X,\vac) \ar[r]^{\Gamma_{\til{Y}}} \ar[u]_{G\mapsto G\circ(f\tens\Id_X)} & \hom_\cC(\til{Y},X') \ar[u]_{g\mapsto g\circ f}\\
 }
\end{matrix}
\end{equation}
commutes. If $X$ has a dual, then $X^*$ is a contragredient with $\Gamma$ contructed using the coevaluation: for a morphism $G: Y\tens X\rightarrow\vac$ in $\cC$, $\Gamma_Y(G)$ is the composition
\begin{equation*}
 Y\xrightarrow{r_Y^{-1}} Y\tens\vac\xrightarrow{\Id_Y\tens i_X} Y\tens(X\tens X^*)\xrightarrow{\cA_{Y,X,X^*}} (Y\tens X)\tens X^*\xrightarrow{G\tens\Id_{X^*}} \vac\tens X^*\xrightarrow{l_{X^*}} X^*.
\end{equation*}
Rigidity then implies $\Gamma_{X^*}(e_{X}) =\Id_{X^*}$. In general, if $X'$ is a contragredient of $X$, we can define an evaluation morphism
\begin{equation*}
 e_X =\Gamma_{X'}^{-1}(\Id_{X'}): X'\tens X\rightarrow\vac,
\end{equation*}
but we do not necessarily have a coevaluation.

The pair $(X', e_X)$ satisfies the following universal property: for any $G: Y\tens X\rightarrow\vac$ in $\cC$, there is a unique morphism $g: Y\rightarrow X'$, namely, $g=\Gamma_Y(G)$, such that the diagram
\begin{equation*}
 \xymatrixcolsep{3pc}
\xymatrix{
 Y\tens X \ar[rd]^{G} \ar[d]_{g\tens\Id_X} & \\
 X'\tens X \ar[r]_(.6){e_X} & \vac \\
}
\end{equation*}
commutes. To see this, note that for any $g: Y\rightarrow X'$, the $\til{Y}=X'$ and $f=g$ case of  \eqref{eqn:contra_nat_iso} is
\begin{equation*}
 \xymatrixcolsep{4pc}
 \xymatrix{
 \hom_\cC(Y\tens X,\vac) \ar[r]^{\Gamma_Y} & \hom_\cC(Y,X') \\
 \hom_\cC(X'\tens X,\vac) \ar[r]^{\Gamma_{X'}} \ar[u]_{H\mapsto H\circ(g\tens\Id_X)} & \hom_\cC(X',X') \ar[u]_{h\mapsto h\circ g}\\
 } .
\end{equation*}
Commutation of this diagram implies
\begin{align*}
 e_X\circ(g\tens\Id_X) & = \Gamma_{X'}^{-1}(\Id_{X'})\circ(g\tens\Id_X)\nonumber\\
 & = \Gamma_Y^{-1}(\Id_{X'}\circ g)\nonumber\\
 & = \Gamma_Y^{-1}(g),
\end{align*}
 so that $e_X\circ(g\tens\Id_X)=G$ if and only if $\Gamma_Y^{-1}(g)=G$, or $g=\Gamma_Y(G)$. 

The universal property of $(X', e_X)$ means that the contragredient is unique up to unique isomorphism. Moreover, if every object of $\cC$ has a contragredient, we get a contravariant endofunctor on $\cC$: given a morphism $f: X\rightarrow Y$ in $\cC$, $f'$ is the unique morphism, guaranteed by the universal property of contragredients, making the diagram
\begin{equation*}
 \xymatrixcolsep{4pc}
 \xymatrix{
 Y'\tens X \ar[r]^{\Id_{Y'}\tens f} \ar[d]^{f'\tens\Id_{X}} & Y'\tens Y \ar[d]^{e_Y} \\
 X'\tens X \ar[r]^{e_X} & \vac \\
 }
\end{equation*}
commute. To show that this indeed defines a functor, first note that the diagram in the case $f=\Id_X$ shows that $\Id_X'$ must be $\Id_{X'}$. Then to show that $(f\circ g)' = g'\circ f'$ for morphisms $f: Y\rightarrow Z$ and $g: X\rightarrow Y$, we need to show that the diagram
\begin{equation*}
\xymatrixcolsep{4pc}
\xymatrix{
 Z'\tens X \ar[r]^{\Id_{Z'}\tens(f\circ g)} \ar[d]^{(g'\circ f')\tens\Id_X} & Z'\tens Z \ar[d]^{e_Z} \\
 X'\tens X \ar[r]^{e_X} & \vac \\
}
 \end{equation*}
commutes. This follows from the commutativity of the diagram
\begin{equation*}
 \xymatrixcolsep{4pc}
 \xymatrix{
 Z'\tens X \ar[r]^{\Id_{Z'}\tens g} \ar[d]^{f'\tens\Id_X} & Z'\tens Y \ar[d]^{f'\tens\Id_X} \ar[rd]^{\Id_{Z'}\tens f} & \\
 Y'\tens X \ar[r]^{\Id_{Y'}\tens g} \ar[d]^{g'\tens\Id_X} & Y'\tens Y \ar[d]^{e_Y} & Z'\tens Z \ar[ld]^{e_Z} \\
 X'\tens X \ar[r]^{e_X} & \vac & \\
 }
\end{equation*}

\begin{rema}\label{rem:r-category}
 A tensor category in which every object has a contragredient is called an \textit{$r$-category} in \cite{BD}, a special case what is called a \textit{Grothendieck-Verdier category}.
\end{rema}

A ribbon category is a rigid tensor category $\cC$ with a natural isomorphism $\delta: \Id_{\cC}\rightarrow **$ satisfying certain natural conditions (see for example \cite[Section 2.2]{BK} for details). If $\cC$ is a braided tensor category, a ribbon structure $\delta$ is equivalent to a twist, which we can define more generally for braided tensor categories with contragredients. If $\cC$ is a braided tensor category with a contragredient functor, then a \textit{twist} is a natural isomorphism $\theta:\Id_\cC\rightarrow\Id_\cC$ that satisfies $\theta_\vac=\Id_\vac$, $\theta_{X'}=\theta_X'$ for any object $X$, and the \textit{balancing equation}
\begin{equation*}
 \theta_{X\tens Y} =\cR_{Y,X}\circ\cR_{X,Y}\circ(\theta_X\tens\theta_Y)
\end{equation*}
for all objects $X$, $Y$. Given such a natural isomorphism $\theta$, we can define $\delta_X: X\rightarrow X''$ for any object $X$ to be the unique morphism making the diagram
\begin{equation*}
 \xymatrixcolsep{5.5pc}
 \xymatrix{
 X\tens X' \ar[r]^{\cR_{X,X'}\circ(\theta_X\tens\Id_{X'})} \ar[d]^{\delta_X\tens\Id_{X'}} & X'\tens X \ar[d]^{e_X} \\
 X''\tens X' \ar[r]^{e_{X'}} & \vac \\
 }
\end{equation*}
commute. If $\cC$ is rigid, the natural transformation $\delta$ defined in this way is a natural isomorphism and a ribbon structure (again see for example \cite[Section 2.2]{BK}). In any ribbon category $\cC$, we can define the \textit{categorical dimension} 
\begin{equation*}
 \dim_\cC X = e_{X'}\circ(\delta_X\tens\Id_{X'})\circ i_X\in\Endo_\cC(\vac)=\mathbb{F}.
\end{equation*}
for any object $X$. If $\cC$ is braided, we can equivalently define $\dim_\cC X$ by the composition
\begin{equation*}
 \vac\xrightarrow{i_X} X\tens X^*\xrightarrow{\theta_X\tens\Id_{X^*}} X\tens X^*\xrightarrow{\cR_{X,X^*}} X^*\tens X\xrightarrow{e_X} \vac.
\end{equation*}

\subsection{Vertex operator algebras}\label{subsec:VOAs}

The tensor categories in this paper will be module categories for vertex operator algebras. We use the definition of vertex operator algebra from, for example, \cite{FLM, FHL, LL}. More generally, we will sometimes consider conformal vertex algebras as defined in \cite{HLZ1}, which may have infinite-dimensional conformal weight spaces and might not have a lower bound on conformal weights. 

We use the definition of \textit{generalized module} for a vertex operator algebra $V$ from \cite{HLZ1}. In particular, a generalized $V$-module $W$ has a grading $W=\bigoplus_{h\in\CC} W_{[h]}$ where $W_{[h]}$ is the generalized eigenspace for the Virasoro operator $L(0)$ on $W$ with generalized eigenvalue $h$. We say that $W$ is \textit{lower bounded} if $W_{[h]}=0$ for $\mathrm{Re}\,h$ sufficiently negative, and we say that $W$ is \textit{grading restricted} if for each $h\in\CC$, $W_{[h]}$ is finite dimensional and $W_{[h-n]}=0$ for $n\in\NN$ sufficiently large. The definition of grading-restricted generalized $V$-module differs from the definition of $V$-module in \cite{LL} only in that the $W_{[h]}$ are allowed to be generalized $L(0)$-eigenspaces rather than eigenspaces. But since we always assume the possibility of generalized $L(0)$-eigenspaces in this paper, we will typically use the term \textit{module} to refer to grading-restricted generalized modules. We also use the term \textit{weak module} to refer to a module for a vertex operator algebra considered as a vertex algebra, as in \cite[Definition 4.1.1]{LL}. That is, we make no grading assumptions for weak modules.

Given a  lower-bounded or grading-restricted generalized module $W=\bigoplus_{h\in\CC} W_{[h]}$ for a vertex operator algebra $V$, \cite{FHL} shows that the graded dual $W'=\bigoplus_{h\in\CC} W_{[h]}^*$ has a $V$-module structure called the contragredient of $W$. We will see later that if $W$ is an object of a braided tensor category of modules, then under mild conditions, $W'$ is a contragredient in the sense of Definition \ref{def:contragredient}.

The notion of vertex tensor category \cite{HL1, HL2} was developed by Huang-Lepowsky, and most recently by Huang-Lepowsky-Zhang \cite{HLZ1}-\cite{HLZ8}, to describe the appropriate tensor structure on module categories for a vertex operator algebra. A key feature is the existence of a tensor product bifunctor for every element of the moduli space of Riemann spheres with two positively-oriented punctures, one negatively-oriented puncture, and local coordinates at the punctures. In order to describe the braided tensor category structure derived from a vertex tensor category, it is sufficient to consider only the $P(z)$-tensor product bifunctors for $z\in\CC^\times$, where $P(z)$ denotes the Riemann sphere with positively-oriented punctures at $0$ and $z$, a negatively-oriented puncture at $\infty$, and local coordinates $w\mapsto w$, $w\mapsto w-z$, and $w\mapsto 1/w$, respectively. Moreover, all $P(z)$-tensor products are naturally isomorphic to each other via parallel transport isomorphisms. So for the remaining discussion in this subsection we may restrict to considering a single tensor product bifunctor on a given category of modules for a vertex operator algebra, which we may assume is the $P(1)$-tensor product.

The definition of tensor product of modules for a vertex operator algebra requires the notion of logarithmic intertwining operator
\begin{equation*}
 \cY(\cdot, x)\cdot: W_1\otimes W_2\rightarrow W_3\lbrace x\rbrace[\log x]
\end{equation*}
of type $\binom{W_3}{W_1\,W_2}$ among three modules for a vertex operator algebra from \cite[Definition 3.10]{HLZ2}. Since we always assume the possibility of logarithms, we will typically refer to a logarithmic intertwining operator simply as an intertwining operator. When we substitute the formal variable $x$ in an intertwining operator with $z\in\CC^\times$, using a choice of branch of $\log z$, we get a $P(z)$-intertwining map (see Definition 4.2 and Proposition 4.8 in \cite{HLZ3}) from $W_1\otimes W_2$ to the \textit{algebraic completion} $\overline{W_3}=\prod_{h\in\CC} (W_3)_{[h]}$  of $W_3$.

Now let $\cC$ be a  $\CC$-linear abelian category of modules for a simple vertex operator algebra $V$ (so that $\Endo_\cC V=\CC$). Given two $V$-modules $W_1$ and $W_2$ in $\cC$, a \textit{tensor product} of $W_1$ and $W_2$ in $\cC$ is a $V$-module $W_1\tens W_2$ in $\cC$ equipped with an intertwining operator $\cY_{W_1,W_2}$ of type $\binom{W_1\tens W_2}{W_1\,W_2}$ satisfying the following universal property: For any $V$-module $W_3$ in $\cC$ and intertwining operator $\cY$ of type $\binom{W_3}{W_1\,W_2}$, there is a unique $V$-module homomorphism $\eta: W_1\tens W_2\rightarrow W_3$ such that $\eta\circ\cY_{W_1,W_2}=\cY$. 

If a tensor product of any pair of modules in $\cC$ exists, then it is unique up to unique isomorphism and determines a bifunctor on $\cC$. For two morphisms $f_1: W_1\rightarrow\til{W}_1$ and $f_2: W_2\rightarrow\til{W}_2$ in $\cC$, the tensor product $f_1\tens f_2$ is the unique homomorphism, guaranteed by the universal property of $(W_1\tens W_2, \cY_{W_1,W_2})$, such that
\begin{equation*}
 (f_1\tens f_2)\circ\cY_{W_1,W_2}=\cY_{\til{W}_1,\til{W}_2}\circ(f_1\otimes f_2).
\end{equation*}
If $\cC$ admits vertex tensor category structure as in \cite{HL1}, we then have the following isomorphisms which give $\cC$ the structure of a braided tensor category; for more details, see \cite{HLZ8} or the exposition in \cite[Section 3.3]{CKM1}:
\begin{itemize}
 \item \textbf{Unit isomorphisms:} For $W$ in $\cC$, the natural left and right unit isomorphisms $l_W: V\tens W\rightarrow W$ and $r_W: W\tens V\rightarrow W$ are characterized by
 \begin{equation}\label{def:unit}
  l_W(\cY_{V,W}(v,x)w) =Y_W(v,x)w,\qquad r_W(\cY_{W,V}(w,x)v) = e^{xL(-1)} Y_W(v,-x)w
 \end{equation}
for $v\in V$, $w\in W$.

\item \textbf{Associativity isomorphisms:} For $W_1$, $W_2$, and $W_3$ in $\cC$, the natural associativity isomorphism
\begin{equation*}
 \cA_{W_1,W_2,W_3}: W_1\tens(W_2\tens W_3)\rightarrow(W_1\tens W_2)\tens W_3
\end{equation*}
is characterized as follows. Let $r_1, r_2\in \RR_+$ be such that $r_1>r_2>r_1-r_2$; then
\begin{align}\label{def:assoc}
\big \langle w', \overline{\cA_{W_1,W_2,W_3}}( & \cY_{W_1,W_2\tens W_3}(w_1,r_1)\cY_{W_2,W_3}(w_2,r_2)w_3)\big\rangle\nonumber\\
 &=\big\langle w', \cY_{W_1\tens W_2, W_3}(\cY_{W_1,W_2}(w_1,r_1-r_2)w_2,r_2)w_3\big\rangle
\end{align}
for $w_1\in W_1$, $w_2\in W_2$, $w_3\in W_3$, and $w'\in((W_1\tens W_2)\tens W_3)'$. Here we substitute positive real numbers into intertwining operators using the real-valued branch of logarithm, and $\cY_{W_1,W_2\tens W_3}(w_1,r_1)\cY_{W_2,W_3}(w_2,r_2)w_3$ is an element of $\overline{ W_1\tens(W_2\tens W_3)}$ defined by its action as a linear functional on $ (W_1\tens(W_2\tens W_3))'$:
\begin{align*}
 \big\langle w',\cY_{W_1,W_2\tens W_3}( & w_1,r_1)\cY_{W_2,W_3}(w_2,r_2)w_3\big\rangle\nonumber\\ &=\sum_{h\in\CC} \big\langle w',\cY_{W_1,W_2\tens W_3}(w_1,r_1)\pi_h(\cY_{W_2,W_3}(w_2,r_2)w_3)\big\rangle,
\end{align*}
where $\pi_h$ is projection onto the $h$-homogeneous space of $W_2\tens_{P(z_2)} W_3$. The vector $\cY_{W_1\tens W_2,W_3}(\cY_{W_1,W_2}(w_1,r_1-r_2)w_2,r_2)w_3\in\overline{(W_1\tens W_2)\tens W_3}$ has a similar interpretation; for these elements to be well defined, one must show that the above sum over $h\in\CC$ is absolutely convergent. Finally, $\overline{\cA_{W_1,W_2,W_3}}$ is the natural extension of the associativity isomorphism to algebraic completions. By \cite[Proposition 3.32]{CKM1}, the definition \eqref{def:assoc} of $\cA_{W_1,W_2,W_3}$ does not depend on the choice of $r_1$ and $r_2$.

\item \textbf{Braiding isomorphisms:} For $W_1$ and $W_2$ in $\cC$, the braiding isomorphism $\cR_{W_1,W_2}: W_1\tens W_2\rightarrow W_2\tens W_1$ is given by
\begin{equation}\label{def:braid}
 \cR_{W_1,W_2}(\cY_{W_1,W_2}(w_1,x)w_2) = e^{xL(-1)}\cY_{W_2,W_1}(w_2, e^{\pi i} x)w_1
\end{equation}
for $w_1\in W_1$, $w_2\in W_2$, where the notation $e^{\pi i} x$ indicates which branch of $\log(-1)$ to use for the substitution $x\mapsto -x$ in the intertwining operator. The inverse braiding isomorphism $\cR_{W_2,W_1}^{-1}: W_1\tens W_2\rightarrow W_2\tens W_1$ is characterized by
\begin{equation}\label{def:inv_braid}
 \cR_{W_2,W_1}^{-1}(\cY_{W_1,W_2}(w_1,x)w_2) = e^{xL(-1)}\cY_{W_2,W_1}(w_2, e^{-\pi i} x)w_1
\end{equation}
for $w_1\in W_1$, $w_2\in W_2$.

\end{itemize}

For sufficient conditions guaranteeing that $\cC$ in fact admits braided tensor category structure as described above, see Assumption 10.1 in \cite{HLZ6} and Assumptions 12.1 and 12.2 in \cite{HLZ8}; see also the discussion in \cite{Hu-suff-cond}. Due to work of Huang \cite{Hu-diff-eqs} and Miyamoto \cite{Mi} (see also the recent \cite{Mi_assoc}), it is expected that for a vertex operator algebra $V$, these conditions are satisfied by the category  $\cC^1_V$ of $C_1$-cofinite $V$-modules: a grading-restricted generalized $V$-module $W$ is \textit{$C_1$-cofinite} if $\dim W/C_1(W)<\infty$, where
\begin{equation*}
 C_1(W)=\bigg\lbrace v_{-1} w\,\bigg\vert\, v\in\bigoplus_{n\geq 1} V_{(n)},\,w\in W\bigg\rbrace.
\end{equation*}
In fact, the following theorem can be extracted from \cite[Section 4]{CJORY} and \cite[Section 3]{CY}, although since the results in these papers are stated differently, we discuss the proof here:
\begin{theo}\label{thm:C1V}
 If $\cC^1_V$ is closed under the contragredient modules of \cite{FHL}, then $\cC^1_V$ admits the braided tensor category structure of \cite{HLZ8} described above.
\end{theo}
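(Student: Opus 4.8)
\medskip

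The plan is to verify, for $\cC=\cC^1_V$, the hypotheses under which the theory of Huang--Lepowsky--Zhang \cite{HLZ1}-\cite{HLZ8} (as streamlined in \cite{Hu-suff-cond}) endows a module category with braided tensor category structure --- namely \cite[Assumption 10.1]{HLZ6} together with \cite[Assumptions 12.1 and 12.2]{HLZ8} --- assembling the required analytic input from \cite{Hu-diff-eqs, Mi, Mi_assoc} along the lines of \cite[Section 4]{CJORY} and \cite[Section 3]{CY}. First I would dispose of the ``background'' conditions. By definition every object of $\cC^1_V$ is a grading-restricted generalized $V$-module, and $\cC^1_V$ is obviously closed under finite direct sums. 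It is closed under quotients: a surjection $q\colon W\twoheadrightarrow\overline W$ of grading-restricted generalized modules carries $C_1(W)$ onto $C_1(\overline W)$, so $\dim\overline W/C_1(\overline W)\leq\dim W/C_1(W)<\infty$, and $\overline W$ is grading-restricted as a quotient of $W$. Since the contragredient functor of \cite{FHL} is exact on grading-restricted generalized modules, closure under quotients together with the hypothesis that $\cC^1_V$ is closed under contragredients forces closure under submodules: if $W_1\subseteq W$ with $W$ in $\cC^1_V$, then $W_1$ is grading-restricted, $(W_1)'$ is a quotient of $W'\in\cC^1_V$ and hence lies in $\cC^1_V$, and therefore so does $W_1\cong(W_1)''$. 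Thus $\cC^1_V$ is a $\CC$-linear abelian category closed under contragredients, with $\Endo_{\cC^1_V}V=\CC$ since $V$ is simple (as in the conventions preceding the theorem).

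The analytic heart is the existence and good behaviour of $P(z)$-tensor products inside $\cC^1_V$. I would use two inputs: (i) a $C_1$-cofinite grading-restricted generalized $V$-module is finitely generated, so every object of $\cC^1_V$ is finitely generated; and (ii) Huang's theorem \cite{Hu-diff-eqs} that, for modules whose contragredients are also $C_1$-cofinite --- which, by the hypothesis on $\cC^1_V$, holds for every object of $\cC^1_V$ --- matrix coefficients of products and of iterates of logarithmic intertwining operators satisfy systems of ordinary differential equations with regular singular points. From (ii) one obtains the convergence of products and iterates, the single-valuedness of the relevant analytic continuations, and the expansions relating products to iterates, that is, the ``convergence and extension'' properties required by \cite{HLZ1}-\cite{HLZ8}, which in particular yield the associativity isomorphisms \eqref{def:assoc}. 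Inputs (i) and (ii), combined with Miyamoto's argument for fusion products \cite{Mi} (see also \cite{Mi_assoc}), also show that the $P(z)$-tensor product $W_1\tens_{P(z)}W_2$ of objects of $\cC^1_V$ exists as a grading-restricted generalized module satisfying the universal property of \cite{HLZ3}, and is again $C_1$-cofinite, hence an object of $\cC^1_V$. With $\tens_{P(z)}$ landing in $\cC^1_V$ and the convergence and extension properties in hand, $\cC^1_V$ carries vertex tensor category structure in the sense of \cite{HL1, HL2}, and hence the braided tensor category structure described above; the unit isomorphisms \eqref{def:unit}, braiding isomorphisms \eqref{def:braid}, and the coherence axioms are then formal, as in \cite[Section 3.3]{CKM1}.

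I expect the main obstacle to be precisely the claim in the second paragraph that the $P(z)$-tensor product of two $C_1$-cofinite grading-restricted generalized $V$-modules is again $C_1$-cofinite and grading-restricted, \emph{without} any $C_2$-cofiniteness hypothesis on $V$. This is the one step requiring real work --- Miyamoto's spanning/filtration argument for fusion products \cite{Mi}, carried out in the logarithmic, grading-restricted generalized setting --- and it is the reason the theorem has to be extracted from the treatments in \cite{CJORY} and \cite{CY} rather than quoted off the shelf. A subsidiary point to watch is that the construction of \cite{HLZ3} initially produces the tensor product only as a submodule of an algebraic completion, so one must first check --- using that both $W_1$ and $W_2$ are finitely generated --- that this submodule is finitely generated, after which (i) and \cite{Mi} give $C_1$-cofiniteness and grading-restriction. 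Everything else --- the closure properties of the first paragraph, and the construction of the unit, associativity, and braiding isomorphisms and the verification of the coherence axioms --- is either elementary or supplied directly by \cite{HLZ1}-\cite{HLZ8} once the convergence and extension properties are available.
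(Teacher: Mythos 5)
Your proposal correctly frames the question — verify the conditions of \cite[Assumption 10.1]{HLZ6} and \cite[Assumptions 12.1, 12.2]{HLZ8}, citing \cite{Mi} for the tensor product and \cite{Hu-diff-eqs} for convergence and extension — but it misses, or at best only gestures at, the actual new verification the theorem requires. The paper takes closure of $\cC_V^1$ under fusion products ([Mi]) and the analytic properties of intertwining operators ([Hu-diff-eqs]) as already established, and then isolates one remaining condition (used in \cite[Theorem 11.4]{HLZ7}, \cite[Theorem 3.1]{Hu-suff-cond}): for $W_1,W_2$ in $\cC_V^1$ and $z\in\CC^\times$, any singly-generated lower-bounded generalized $V$-submodule $W\subseteq\mathrm{COMP}_{P(z)}\big((W_1\otimes W_2)^*\big)$ must again be an object of $\cC_V^1$. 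This is the condition that actually needs the contragredient hypothesis in an essential way, and your proposal does not identify it.

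Your subsidiary remark — that the HLZ construction produces the tensor product inside an algebraic completion, so one must ``first check, using that both $W_1$ and $W_2$ are finitely generated, that this submodule is finitely generated, after which (i) and \cite{Mi} give $C_1$-cofiniteness and grading-restriction'' — is pointed in the right direction but does not supply a working argument. Miyamoto's Key Theorem in \cite{Mi} controls images of intertwining operators, not arbitrary finitely-generated submodules of algebraic duals, so one cannot directly ``apply \cite{Mi}'' to $W$. The paper's actual argument dualizes: the inclusion $W\subseteq(W_1\otimes W_2)^*$ induces a $P(z)$-intertwining map $I\colon W_1\otimes W_2\to\overline{W'}$, hence an intertwining operator $\cY$ whose image $\im\cY\subseteq W'$ lies in $\cC_V^1$ by \cite{Mi} and \cite[Proposition 2.2]{CMY}; the hypothesis that $\cC_V^1$ is closed under contragredients then puts $(\im\cY)'$ in $\cC_V^1$, and one checks that the composite $\delta\colon W\hookrightarrow W''\twoheadrightarrow(\im\cY)'$ is injective (giving grading-restriction of $W$, hence $W\cong W''$, hence surjectivity of $\delta$), so $W\cong(\im\cY)'\in\cC_V^1$. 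In your write-up the contragredient hypothesis is used only for the elementary observation that $\cC_V^1$ is closed under submodules; its real role — the double-dual argument above — is absent. That is a genuine gap rather than a difference of route.
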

\begin{proof}
The closure of $\cC_V^1$ under tensor products is proved in \cite{Mi}, and the analytic properties of intertwining operators needed for existence of the associativity isomorphisms and for proving the triangle, pentagon, and hexagon axioms are proved in \cite{Hu-diff-eqs} (see also \cite[Section 11.2]{HLZ7}). However, there is one additional condition that $\cC_V^1$ should satisfy, which is used in the proof of \cite[Theorem 11.4]{HLZ7}; see also \cite[Theorem 3.1]{Hu-suff-cond}. 

To describe this condition, take $z\in\CC^\times$ and generalized $V$-modules $W_1$, $W_2$. From \cite[Theorem 5.48]{HLZ4}, the vector space $(W_1\otimes W_2)^*$ contains a weak $V$-module $\mathrm{COMP}_{P(z)}((W_1\otimes W_2)^*)$ consisting of linear functionals that satisfy the \textit{$P(z)$-compatibility condition} of \cite{HLZ4}. Then the condition we need on $\cC_V^1$ is that if $W_1$ and $W_2$ are objects of $\cC_V^1$ and $W\subseteq\mathrm{COMP}_{P(z)}((W_1\otimes W_2)^*)$ is a singly-generated lower-bounded generalized $V$-submodule, then $W$ is an object of $C_V^1$. This condition is proved in \cite[Theorem 4.2.5]{CJORY} and  \cite[Theorem 3.6]{CY}: First observe that the inclusion $W\subseteq(W_1\otimes W_2)^*$ induces a linear map $I: W_1\otimes W_2\rightarrow W^* =\overline{W'}$ such that
\begin{equation*}
\langle I(w_1\otimes w_2), f\rangle = f(w_1\otimes w_2)
\end{equation*}
for all linear functionals $f\in W$. Because $W$ is lower bounded, $W'$ is also a lower-bounded generalized $V$-module, and then the definition of the $P(z)$-compatibility condition, in particular \cite[Equation 5.141]{HLZ4}, implies that $I$ is a $P(z)$-intertwining map (similar to \cite[Proposition 5.24]{HLZ4}). In particular, $I=\cY(\cdot, z)$ for some intertwining operator $\cY$ of type $\binom{W'}{W_1\,W_2}$, where we substitute $x\mapsto z$ in $\cY$ using some choice of branch of $\log z$.

By \cite[Key Theorem]{Mi} and \cite[Proposition 2.2]{CMY}, the image $\im\cY$ of the intertwining operator $\cY$ is an object of $\cC_V^1$, and thus $(\im\cY)'$ is also $C_1$-cofinite by hypothesis. Thus we have a $V$-homomorphism
\begin{equation*}
 \delta: W\hookrightarrow W'' \twoheadrightarrow (\im\cY)'
\end{equation*}
of $W$ to a grading-restricted generalized $V$-module, characterized by 
\begin{equation*}
\langle\delta(f), I(w_1\otimes w_2)\rangle =\langle I(w_1\otimes w_2), f\rangle =f(w_1\otimes w_2)
\end{equation*}
for $f\in W$, $w_1\in W_1$, and $w_2\in W_2$. It is clear from the above that $\delta$ is injective, so $W$ is a grading-restricted generalized $V$-module. It then follows that the map $W\rightarrow W''$ is an isomorphism, so that $\delta$ is also surjective. Consequently, $W\cong(\im\cY)'$ is an object of $\cC_V^1$, proving the desired condition on $\cC_V^1$ and thus the theorem.
\end{proof}

It is usually hard to show that a braided tensor category $\cC$ of $V$-modules is rigid, but under mild conditions, every $V$-module in $\cC$ has a contragredient in the sense of Definition \ref{def:contragredient}:
\begin{propo}\label{prop:contra_property}
 If $V$ is self-contragredient and $\cC$ is a braided tensor category of $V$-modules which is closed under the contragredient modules of \cite{FHL}, then contragredient modules are contragredients in the sense of Definition \ref{def:contragredient}.
\end{propo}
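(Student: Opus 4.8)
The plan is to show that for every $V$-module $W$ in $\cC$, the contragredient module $W'$ of \cite{FHL} represents the contravariant functor $\hom_\cC(\bullet\tens W, V)$; I would do this by composing three natural isomorphisms of vector spaces. The first is immediate from the universal property of the tensor product in $\cC$ recalled above: for each object $Y$ of $\cC$, sending a homomorphism $\eta$ to $\eta\circ\cY_{Y,W}$ is a linear isomorphism from $\hom_\cC(Y\tens W, V)$ onto the space $\cI\binom{V}{Y\,W}$ of intertwining operators of that type, and functoriality of $\tens$ makes this isomorphism natural, contravariantly, in $Y$.

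Second, I would invoke the symmetry isomorphisms among spaces of intertwining operators --- the skew-symmetry operations $\Omega_r$ and the adjoint operations $A_r$ of \cite{FHL} in the non-logarithmic case, and of \cite{HLZ2} in general --- together with the hypothesis that $V$ is self-contragredient, i.e.\ $V'\cong V$ as a $V$-module. A suitable composite of these operations, after a branch of logarithm is fixed, gives a natural isomorphism $\cI\binom{V}{Y\,W}\cong\cI\binom{W'}{V'\,Y}=\cI\binom{W'}{V\,Y}$. These operations are given by explicit formulas involving only $L(-1)$-operators and the FHL-contragredients of the modules, so they are available under the present hypotheses; one checks from the formulas that the isomorphism is natural in $Y$, which now occupies the second input slot, so the variance is again contravariant.

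Third, I would use the standard identification of intertwining operators whose first input is the vertex operator algebra itself: given $\cZ$ of type $\binom{W'}{V\,Y}$, the $L(-1)$-derivative property together with $L(-1)\vac=0$ forces $\cZ(\vac,x)$ to be independent of $x$, and the Jacobi identity for $\cZ$ then shows that $\cZ(\vac,x)\colon Y\to W'$ is a $V$-module homomorphism; conversely, a homomorphism $\phi\colon Y\to W'$ yields the intertwining operator $\cZ(v,x)y=Y_{W'}(v,x)\phi(y)$. This gives a natural isomorphism $\cI\binom{W'}{V\,Y}\cong\hom_V(Y,W')=\hom_\cC(Y,W')$. Composing the three isomorphisms produces the natural isomorphism $\Gamma$ of Definition \ref{def:contragredient}, so $W'$ is a contragredient of $W$; one can moreover verify that the evaluation $e_W=\Gamma_{W'}^{-1}(\Id_{W'})\colon W'\tens W\to V$ it determines corresponds under the first isomorphism to the canonical intertwining operator realizing the natural pairing of $W'$ with $W$.

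I expect the main point requiring care to be essentially bookkeeping: tracking naturality through the symmetry operations and making consistent choices of logarithm branches throughout, so that the composite $\Gamma$ is genuinely a natural transformation and not merely a family of isomorphisms. The one place where an honest input is used, beyond elementary facts about intertwining operators, is the existence and good behavior of the adjoint operations $\Omega_r$ and $A_r$ on \emph{logarithmic} intertwining operators --- in particular that they restrict to mutually inverse bijections on the relevant spaces --- which is precisely what \cite{HLZ2} supplies.
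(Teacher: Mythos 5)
Your proof is correct and is essentially a reshuffling of the paper's own argument.  Both proofs begin with the universal-property isomorphism $\hom_\cC(Y\tens W, V)\cong\cV^{V}_{Y,W}$, both use the FHL/HLZ adjoint (``$A_r$'') operation to move $W$ into the target slot, and both use $V\cong V'$ to drop the prime.  Where you diverge is at the end: you additionally apply the skew-symmetry $\Omega_r$ to bring $V$ into the first input slot, landing in $\cV^{W'}_{V,Y}$, and then identify this space with $\hom_V(Y,W')$ via the $L(-1)$-derivative argument ($\cZ(\vac,x)$ is constant, hence a module map).  The paper instead never touches the first tensor factor: after $\Gamma_2$ and $\Gamma_3$ it sits in $\cV^{W'}_{Y,V}$, applies the universal property of $\tens$ a second time to get $\hom_V(Y\tens V, W')$, and strips the $V$ using the right unit $r_Y^{-1}$.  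The net isomorphism is the same; the trade-off is that your route imports one more piece of vertex-algebraic machinery ($\Omega_r$ and its naturality), whereas the paper's route reuses the tensor-categorical data ($\tens$ universal property and unit isomorphisms) already in play.  The bookkeeping burden you anticipate is real but modest; the only naturality check that takes genuine work is the one for the adjoint operation, which the paper verifies by explicit computation and you would need to do as well, along with the (easy) naturality of $\Omega_r$ and of $\cZ\mapsto\cZ(\vac,x)$.
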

\begin{proof}
 For $V$-modules $W_1$, $W_2$, $W_3$, let $\cV^{W_3}_{W_1, W_2}$ denote the vector space of intertwining operators of type $\binom{W_3}{W_1\,W_2}$, and let $\kappa: V\rightarrow V'$ be a $V$-module isomorphism. Then we have the following sequence of isomorphisms of functors from $\cC$ to $\cV ec_\CC$:
 \begin{equation*}
  \hom_V(\bullet\tens W, V)\xrightarrow{\Gamma_1} \cV^{V}_{\bullet, W}\xrightarrow{\Gamma_2} \cV^{W'}_{\bullet, V'}\xrightarrow{\Gamma_3} \cV^{W'}_{\bullet,V} \xrightarrow{\Gamma_4} \hom_V(\bullet\tens V, W')\xrightarrow{\Gamma_5} \hom_V(\bullet, W').
 \end{equation*}
The first isomorphism comes from the universal property of the tensor product $\tens$: given $F: X\tens W\rightarrow V$, $\Gamma_1(F)=F\circ\cY_{X,W}$. The second isomorphism comes from the symmetry of spaces of intertwining operators proved in Theorem 5.5.1 and Proposition 5.5.2 of \cite{FHL} (see also \cite[Proposition 3.46]{HLZ2}): given an intertwining operator $\cY$ of type $\binom{V}{X\,W}$, we can define $\Gamma_2(\cY)$ of type $\binom{W'}{X\,V'}$ by the formula
\begin{equation*}
 \langle \Gamma_2(\cY)(b,x)v', w\rangle = \langle v', \cY(e^{xL(1)} e^{\pi i L(0)}(x^{-L(0)})^2 b, x^{-1})w\rangle
\end{equation*}
for $b\in X$, $w\in W$, and $v'\in V'$. The third isomorphism uses the isomorphism $\kappa$: given $\cY$ of type $\cV^{W'}_{X,V'}$, $\Gamma_3(\cY)=\cY\circ(\Id_{X}\otimes\kappa)$. The fourth isomorphism is again the universal property of $\tens$, and the final isomorphism uses the right unit isomorphisms: given $F: X\tens V\rightarrow W'$, we define $\Gamma_5(F)=F\circ r_{X}^{-1}$.

It is easy to check that each of these isomorphisms is natural, so they compose to yield a natural isomorphism $\Gamma: \hom_V(\bullet\tens W,V)\rightarrow\hom_V(\bullet,W')$. For the first and fourth, this amounts to the definition of the tensor product of morphisms in $\cC$. For the third, naturality amounts to the identity
\begin{equation*}
 (f\otimes \Id_{V'})\circ(\Id_{W_1}\otimes\kappa) = f\otimes \kappa =(\Id_{W_2}\otimes\kappa)\circ(f\otimes\Id_V)
\end{equation*}
for any morphism $f: W_1\rightarrow W_2$ in $\cC$, and the fifth isomorphism is natural because the right unit isomorphisms in $\cC$ are natural. Proving that the second isomorphism is natural is most interesting: we need to show that for any homomorphism $f: W_1\rightarrow W_2$ in $\cC$ and intertwining operator $\cY$ of type $\binom{V}{W_2\,W}$, 
$$\Gamma_{2; W_1}(\cY\circ(f\otimes\Id_W))=\Gamma_{2; W_2}(\cY)\circ(f\otimes\Id_{V'}).$$
In fact, for any $w_1\in W_1$, $w\in W$, and $v'\in V'$, we have
\begin{align*}
 \langle\Gamma_{2; W_1}(\cY\circ(f\otimes\Id_W))(w_1,x)v', w\rangle & = \langle v', [\cY\circ(f\otimes\Id_W)](e^{xL(1)}e^{\pi i L(0)} (x^{-L(0)})^2 w_1, x^{-1})w\rangle\nonumber\\
 & = \langle v', \cY(f(e^{xL(1)} e^{\pi i L(0)} (x^{-L(0)})^2 w_1), x^{-1})w\rangle\nonumber\\
 & =\langle v', \cY(e^{xL(1)} e^{\pi i L(0)} (x^{-L(0)})^2 f(w_1), x^{-1})w\rangle\nonumber\\
 & = \langle \Gamma_{2; W_2}(\cY)(f(w_1), x)v', w\rangle\nonumber\\
 & = \langle [\Gamma_{2;W_2}(\cY)\circ(f\otimes\Id_{V'})](w_1,x)v', w\rangle,
\end{align*}
where the third equality follows because the $V$-module homomorphism $f$ commutes with $L(0)$ and $L(1)$.
\end{proof}

\begin{rema}
 The previous proposition has appeared before in \cite[Section 5.1]{CKM2}, although fewer details for the proof were given there. Recalling Remark \ref{rem:r-category}, Proposition \ref{prop:contra_property} shows that if $V$ is self-contragredient, then a braided tensor category $\cC$ of $V$-modules that is closed under contragredients is an $r$-category in the sense of \cite{BD}. If $V$ is not self-contragredient, then $\cC$ is a Grothendieck-Verdier category in the sense of \cite{BD}, with dualizing object $V'$ (see \cite[Theorem 2.12]{ALSW}).
\end{rema}

Proposition \ref{prop:contra_property} shows that if $V$ is a self-contragredient vertex operator algebra and $\cC$ is a rigid braided tensor category of $V$-modules closed under the contragredient modules of \cite{FHL}, then duals in $\cC$ are contragredient modules. Moreover, $\cC$ has a canonical ribbon structure determined by the twist $\theta_W= e^{2\pi i L(0)}$ for objects $W$ in $\cC$. Even if such a braided tensor category $\cC$ is not rigid, it will still have a contragredient functor and a twist.

\subsection{General coset theory}\label{subsec:cosets}

Here we discuss some general results on coset subalgebras of vertex operator algebras. The general setting for this subsection is the following:
\begin{itemize}
 \item $(A,Y,\vac,\omega)$ is a positive-energy vertex operator algebra, that is, $A$ is $\NN$-graded by conformal weights and $A_{(0)}=\CC\vac$.
 
 \item $(U, Y_U=Y\vert_{U\otimes U},\vac,\omega_U)$ is a vertex operator subalgebra of $A$ with conformal vector $\omega_U\in A_{(2)}\cap U$ such that $L(1)\omega_U=0$. 
\end{itemize}
In this setting, the \textit{commutant} or \textit{coset} of $U$ in $A$ is defined by
\begin{equation*}
 C_A(U)=\lbrace v\in A\,\vert\,[Y(u, x_1), Y(v, x_2)]=0\,\,\text{for all}\,\,u\in U\rbrace.
\end{equation*}
Equivalently, (see \cite[Remark 3.2.4]{LL} and \cite[Theorem 5.2]{FZ}),
\begin{align}\label{eqn:comm_char}
 C_A(U) &=\lbrace v\in A\,\vert\, Y(u,x)v\in A[[x]]\,\,\text{for all}\,\,u\in U\rbrace\nonumber\\
 & =\lbrace v\in A\,\vert\,L_U(-1)v=0\rbrace,
\end{align}
where $L_U(-1)$ is the coefficient of $x^{-2}$ in $Y(\omega_U,x)$. 

By the Frenkel-Zhu coset theorem \cite[Theorem 5.1]{FZ}, the coset $V=C_A(U)$ is a vertex operator algebra with vertex operator $Y_V=Y\vert_{V\otimes V}$, vacuum $\vac$, and conformal vector $\omega_V=\omega-\omega_U$. Moreover, setting $Y(\omega_V,x)=\sum_{n\in\ZZ} L_V(n)\,x^{-n-2}$, we have
\begin{equation}\label{eqn:LU(n)=L(n)_on_U}
  L(n)\vert_U=L_U(n)\vert_U\hspace{2em}\text{and}\hspace{2em} L(n)\vert_V=L_V(n)\vert_V.
 \end{equation}
for $n\geq -1$ (see \cite[Theorem 3.11.12]{LL}). The $n=0$ case shows that the gradings of $U$ and $V$ are compatible with the grading of $A$:
\begin{equation*}
 U_{(n)}=A_{(n)}\cap U\hspace{2em}\text{and}\hspace{2em}V_{(n)}=A_{(n)}\cap V.
\end{equation*}
for $n\in\NN$. In particular, $U$ and $V$ are both positive-energy vertex operator algebras.

Now the following fundamental theorem on cosets is more or less known: for example a decomposition of $A$ as in the second assertion is mentioned in \cite[Section 5.2]{JL}. But for completeness, we will give a proof in Appendix \ref{app:gen_coset_proofs}:
\begin{theo}\label{thm:fund_coset}
In the setting of this subsection, assume in addition that:
\begin{itemize}
 \item The positive-energy vertex operator algebras $A$ and $U$ are simple.
  \item As a weak $U$-module, $A$ is the direct sum of irreducible grading-restricted $U$-modules.
  \item The vertex subalgebra $U$ equals its own double commutant: $U=C_A(C_A(U))$. That is, $U$ and its commutant $V$ form a dual pair in $A$: $V=C_A(U)$ and $U=C_A(V)$.
 \end{itemize}
Then:
\begin{enumerate}
 
 \item There is an injective vertex operator algebra homomorphism
 \begin{align*}
  \psi: U\otimes V & \rightarrow A \\
   u\otimes v & \mapsto u_{-1} v
 \end{align*}
 giving $A$ the structure of a $U\otimes V$-module.

 \item There is a $U\otimes V$-module isomorphism
 \begin{equation*}
  \varphi: \bigoplus_{i\in I} U_i\otimes V_i \rightarrow A
 \end{equation*}
where the $U_i$ are the distinct irreducible $U$-modules occurring in $A$, with $U=U_0$ for some $0\in I$, and the $V_i$ are grading-restricted $V$-modules given by $V_i=\hom_U(U_i,A)$. The isomorphism $\varphi$ is defined by
\begin{equation*}
 \varphi(u_i\otimes f) = f(u_i)
\end{equation*}
for $u_i\in U_i$ and $f\in V_i=\hom_U(U_i,A)$.

\item For $i\in I$, $\dim \hom_V(V,V_i) =\delta_{i,0}$.

\item The positive-energy vertex operator algebra $V$ is simple.
\end{enumerate}
\end{theo}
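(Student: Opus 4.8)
The plan is to prove Theorem \ref{thm:fund_coset} by establishing the four assertions roughly in the stated order, since each builds on the previous ones. For part (1), the map $\psi(u\otimes v) = u_{-1}v$ is the standard ``mutual locality'' construction: because $U$ and $V=C_A(U)$ centralize each other in $A$, the state-field correspondence shows that $Y(u_{-1}v,x)$ factors as a normal-ordered-type product of $Y(u,x)$ and $Y(v,x)$, and one checks directly that $\psi$ intertwines the vertex operators of $U\otimes V$ with those of $A$. Injectivity is where simplicity of $A$ enters: the kernel of $\psi$ would be a proper ideal of $U\otimes V$, and since $A$ is simple and $U$, $V$ are simple with $\psi(\vac\otimes\vac)=\vac\neq 0$, one deduces $\psi$ is injective (using that a nonzero vertex operator algebra homomorphism out of a simple VOA is injective, together with the tensor-product decomposition of ideals of $U\otimes V$).

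For part (2), since $A$ is a direct sum of irreducible grading-restricted $U$-modules by hypothesis, standard isotypic-decomposition arguments give $A\cong\bigoplus_{i\in I} U_i\otimes\hom_U(U_i,A)$ as $U$-modules, with the $U_i$ distinct irreducibles; setting $V_i=\hom_U(U_i,A)$, the point is that each $V_i$ is naturally a $V$-module via $(v\cdot f)(u_i) = v_{-1}f(u_i)$ (well-defined because $V$ commutes with $U$), and that $\varphi(u_i\otimes f)=f(u_i)$ is then a $U\otimes V$-module isomorphism. One must verify grading-restrictedness of $V_i$: this follows because $A$ is positive-energy with finite-dimensional weight spaces and the gradings of $U$, $V$ are compatible with that of $A$ (as established via \eqref{eqn:LU(n)=L(n)_on_U}), so each $V_i$ inherits a lower-bounded, finite-dimensionally-graded structure; finiteness of conformal weight spaces of $V_i$ uses that only finitely many $U_j$ contribute to each weight space of $A$.

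For part (3), the statement $\dim\hom_V(V,V_i)=\delta_{i,0}$ must exploit the dual pair hypothesis $U=C_A(V)$ symmetrically. A $V$-module map $V\to V_i$ is the same as a $V$-module map $V\to\hom_U(U_i,A)$, which by adjunction corresponds to a $U\otimes V$-module map $U_i\otimes V\to A$, i.e. (restricting to $U_i\otimes\vac$) to a $U$-module map $U_i$ into the space of vectors in $A$ killed by $L_V(-1)$; by \eqref{eqn:comm_char} applied to the pair with roles reversed, that space is $C_A(V)=U$. So $\hom_V(V,V_i)\cong\hom_U(U_i,U)$, which is $1$-dimensional if $i=0$ and $0$ otherwise since the $U_i$ are distinct irreducibles and $U_0=U$. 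This step is the main obstacle, since it requires carefully setting up the adjunction between $\hom_U$ and the $U\otimes V$-module structure, and checking that the identification of $V_0$ with $V$ is the correct one (that $0\in I$ indexes $U_0=U$ forces $V_0=\hom_U(U,A)=C_A(U)=V$ via $f\mapsto f(\vac)$).

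For part (4), simplicity of $V$ follows from part (3) together with simplicity of $A$: a nonzero proper ideal $J\subsetneq V$ would, via $\psi$, generate a nonzero proper $U\otimes V$-submodule (indeed ideal-like substructure) of $A$ contradicting either simplicity of $A$ or the multiplicity-one statement $\dim\hom_V(V,V_0)=1$; more precisely, the bottom piece $U_0\otimes V = U\otimes V$ of $A$ would acquire an extra copy of $V_0=V$ or a proper submodule inconsistent with $V$ appearing with multiplicity one as a $V$-module summand. I would phrase this as: any nonzero $V$-submodule of $A$ contained in $C_A(U)=V$ must be all of $V$ because $V$ is generated as a $V$-module by $\vac$ inside the simple VOA $A$ (using that $A$-submodules generated by $\vac$ exhaust $A$), hence $V$ has no nonzero proper ideals. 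Since these arguments are largely formal given the earlier structure, I expect part (3) to be the genuinely delicate point and would devote the bulk of the write-up to it, deferring the full details to Appendix \ref{app:gen_coset_proofs} as the excerpt indicates.
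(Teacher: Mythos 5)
Your part~(3) argument is actually a clean alternative to the paper's: identifying $\hom_V(V,V_i)$ with $\ker L_{V_i}(-1) = \hom_U(U_i, C_A(V)) = \hom_U(U_i,U)$ gives $\delta_{i,0}$ uniformly in $i$, whereas the paper handles $i=0$ separately by first establishing $V_0\cong V$ and then invoking part~(4). However, there are two genuine problems with the rest.

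First, part~(4) has a real gap. You assert that ``any nonzero $V$-submodule of $A$ contained in $C_A(U)=V$ must be all of $V$ because $V$ is generated as a $V$-module by $\vac$.'' But a nonzero proper ideal $J\subsetneq V$ need not contain $\vac$, so the fact that $\vac$ generates $V$ says nothing about $J$. The sketch preceding it (``a nonzero proper ideal $J$ would, via $\psi$, generate a nonzero proper $U\otimes V$-submodule of $A$'') also does not work as written: $\psi(U\otimes J)$ is a $U\otimes V$-submodule of $A$, but it is not obviously an $A$-ideal, since the vertex operators of the components $U_i\otimes V_i$ with $i\neq 0$ need not preserve it. This is not a cosmetic issue. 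The paper's proof cites the (nontrivial) argument of \cite[Theorem 1.2]{McR-fin-ind-ss}, which shows that a vertex operator subalgebra occurring as a direct summand of a simple vertex operator algebra (as a module over itself) inherits a nondegenerate invariant form and is therefore simple; some input of this strength is needed, and it is not supplied by the generation-by-$\vac$ observation. Note also that your part~(3) gives $\dim\hom_V(V,V_0)=1$, but that alone does not imply $V$ is simple, so part~(4) cannot be deduced from it.

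Second, your injectivity argument in part~(1) is circular as stated: you appeal to ``$U$, $V$ are simple'' to conclude $\ker\psi=0$, but simplicity of $V$ is precisely part~(4), which you prove last. The circularity is avoidable --- e.g.\ one can show, using that $U$ is irreducible, that $\ker\psi = U\otimes K$ for a subspace $K\subseteq V$, and then note that $\psi\vert_{\vac\otimes V}$ is the inclusion $V\hookrightarrow A$ and hence injective, forcing $K=0$ --- but you would need to spell this out. The paper sidesteps the issue entirely by first proving the decomposition~(2), identifying $V_0\cong V$ via $f\mapsto f(\vac)$, and then observing that $\psi\circ(\Id_U\otimes\varphi_\vac)=\varphi\vert_{U_0\otimes V_0}$, so injectivity of $\psi$ follows from injectivity of $\varphi$.
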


The $U\otimes V$-module decomposition of $A$ in the third assertion of the theorem generalizes to any $U\otimes V$-module which is semisimple as a $U$-module. First, recall that we make no grading assumptions on a weak module for a vertex operator algebra, while a generalized module has a generalized $L(0)$-eigenspace direct sum decomposition. A generalized module $X$ is grading-restricted if the generalized $L(0)$-eigenspaces $X_{[h]}$ are finite dimensional and for any $h\in\CC$, $W_{[h+n]}=0$ for $n\in\ZZ$ sufficiently negative. We will also prove the following theorem in Appendix \ref{app:gen_coset_proofs}:
\begin{theo}\label{thm:decomp_of_UV-mod}
Suppose that $U$ and $V$ are vertex operator algebras and $X$ is a weak $U\otimes V$-module which, as a weak $U$-module, is the direct sum of irreducible grading-restricted $U$-modules. Then there is a $U\otimes V$-module isomorphism
 \begin{equation*}
  \varphi_X: \bigoplus_{j\in J} M_j\otimes W_j\rightarrow X,
 \end{equation*}
where the $M_j$ are the distinct irreducible $U$-modules occurring in $X$ and $W_j=\hom_U(M_j,X)$ is a weak $V$-module. If moreover $X$ is a generalized $U\otimes V$-module, then each $W_j$ is a generalized $V$-module, and if $X$ is grading restricted, then so is each $W_j$.
\end{theo}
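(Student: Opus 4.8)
The plan is to decompose $X$ into its $M_j$-isotypic components and to identify each component with $M_j\otimes\hom_U(M_j,X)$ via an evaluation map, in the spirit of Theorem~\ref{thm:fund_coset}(2). Since $X$ is semisimple as a weak $U$-module, grouping a decomposition into irreducibles by isomorphism type gives $X=\bigoplus_{j\in J}X_j$, where $X_j$ is the sum of all $U$-submodules of $X$ isomorphic to $M_j$. The crucial structural fact is that an irreducible grading-restricted $U$-module is generated by any one of its nonzero vectors, hence is singly generated; consequently $\hom_U(M_j,-)$ commutes with arbitrary direct sums, and together with Schur's lemma ($\hom_U(M_j,M_{j'})=\delta_{j,j'}\CC$, valid since grading-restricted modules are at most countable-dimensional) this yields that the evaluation map $M_j\otimes\hom_U(M_j,X)\to X_j$, $m\otimes f\mapsto f(m)$, is a linear isomorphism. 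Summing over $j$ defines the linear isomorphism $\varphi_X$ in the statement.

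Next I would equip $W_j=\hom_U(M_j,X)$ with a weak $V$-module structure by the formula $(Y_{W_j}(v,x)f)(m)=Y_X(\vac\otimes v,x)\big(f(m)\big)$ for $v\in V$, $m\in M_j$. Since $\vac\otimes V$ is a vertex subalgebra of $U\otimes V$ whose vertex operators commute with those of $U\otimes\vac$, the operator $Y_X(\vac\otimes v,x)$ commutes with the $U$-action on $X$, so $Y_{W_j}(v,x)f$ is again $U$-intertwining in $m$; the truncation property of $Y_X$ applied to a single generator of $M_j$ then shows $Y_{W_j}(v,x)f\in W_j((x))$. Each weak-module axiom for $(W_j,Y_{W_j})$---lower truncation, the vacuum property, the $L(-1)$-derivative property, and the Jacobi identity---follows routinely by evaluating at $m\in M_j$ and invoking the corresponding axiom for the weak $V$-module obtained by restricting the $U\otimes V$-action along $\vac\otimes V$. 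That $\varphi_X$ is a $U\otimes V$-module homomorphism is then immediate: it is $U$-equivariant because each $f$ is $U$-intertwining, it is $V$-equivariant by the very definition of $Y_{W_j}$, and a $U\otimes V$-module structure is the same datum as a pair of commuting $U$- and $V$-module structures.

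For the grading statements, suppose first that $X$ is a generalized $U\otimes V$-module, so that $L_X(0)=L_X^U(0)+L_X^V(0)$ acts locally finitely, the two commuting summands being the $L(0)$-operators attached to $\omega_U\otimes\vac$ and $\vac\otimes\omega_V$. Transporting along $\varphi_X$, on $X_j\cong M_j\otimes W_j$ the operator $L_X^U(0)$ becomes $L_{M_j}(0)\otimes\Id$, which is locally finite because $M_j$ is grading-restricted; and since $M_j$ is irreducible one has $\Endo_U(M_j\otimes W_j)\cong\Endo_\CC(W_j)$, under which $L_X^V(0)$ becomes $\Id\otimes L_{W_j}(0)$. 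A difference of commuting locally finite operators is locally finite, so $L_X^V(0)$, hence $L_{W_j}(0)$, is locally finite, giving $W_j$ the generalized-eigenspace decomposition $W_j=\bigoplus_{c\in\CC}(W_j)_{[c]}$ required of a generalized $V$-module. If moreover $X$ is grading-restricted, then from $L_X(0)=L_{M_j}(0)\otimes\Id+\Id\otimes L_{W_j}(0)$ on $X_j$ one gets $(X_j)_{[h]}=\bigoplus_{a+c=h}(M_j)_{[a]}\otimes(W_j)_{[c]}$; fixing $a$ to be the lowest conformal weight of $M_j$ and using $(M_j)_{[a]}\neq 0$, the finite-dimensionality and lower-boundedness of the spaces $(X_j)_{[h]}\subseteq X_{[h]}$ force the same for the $(W_j)_{[c]}$, so each $W_j$ is grading-restricted.

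The main obstacle lies entirely in the possibly infinite, a priori ungraded setting: one must ensure that $\hom_U(M_j,-)$ interacts correctly with infinite direct sums and with formal Laurent series, so that $\varphi_X$ is a bijection and $Y_{W_j}$ is well-defined, and both of these rest on the single generation of irreducible grading-restricted $U$-modules; the remaining subtlety is the local-finiteness bookkeeping in the last paragraph that upgrades $W_j$ to a (grading-restricted) generalized $V$-module. By contrast, verifying the vertex-algebraic axioms for $Y_{W_j}$ and the homomorphism property of $\varphi_X$ is routine.
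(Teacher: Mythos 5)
Your proof is correct and follows the same overall structure as the paper's appendix argument: decompose $X$ into $M_j$-isotypic components, identify each with $M_j\otimes\hom_U(M_j,X)$ via the evaluation map, transfer the $V$-action to $W_j=\hom_U(M_j,X)$ through $Y_X(\vac_U\otimes\cdot,x)$, and finally transfer the grading. The one place you take a genuinely different route is in proving bijectivity of the evaluation map: you fix a direct-sum decomposition of the isotypic component into copies of $M_j$, use single generation of $M_j$ (so $\hom_U(M_j,-)$ commutes with arbitrary direct sums) and Schur's lemma on each summand; the paper instead proves injectivity directly, without fixing a decomposition, by passing to a homogeneous weight space $(M_j)_{[h]}$ and applying the Jacobson Density Theorem to construct degree-preserving operators that separate the $f_k$. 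Your route is arguably cleaner and avoids density, at the small cost of the observation that $\hom$ out of a finitely generated object distributes over direct sums. Your handling of the generalized-grading statement is also phrased differently (difference of commuting locally finite operators is locally finite) from the paper's (restrict $L(0)-h_U$ to genuine $L_U(0)$-eigenspaces), but these are equivalent. You leave the weak-module axioms for $Y_{W_j}$ and the factorization $Y_X(u\otimes v,x)=Y_X(\vac_U\otimes v,x)Y_X(u\otimes\vac_V,x)$ (the paper's equation \eqref{eqn:Jacobi_calc}) as routine; this is fair, as both are standard Jacobi-identity manipulations, though the paper carries them out explicitly.
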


\section{The setting}\label{sec:setting}

In this section, we establish notation and basic properties for vertex operator algebra extensions $U\otimes V\hookrightarrow A$, as in Theorem \ref{thm:fund_coset}, from a tensor category point of view. The main tool is the vertex operator algebra extension theory developed in \cite{HKL, CKM1, CKM2}.

\subsection{Extension theory for cosets}\label{sec:mirror_equiv_setting}

We want to study coset-type extensions $U\otimes V\hookrightarrow A$ as in Theorem \ref{thm:fund_coset} in the setting that both $U$ and $V$ have categories of grading-restricted generalized modules that admit braided tensor category structure as described in Section \ref{subsec:VOAs}. However, we want to allow $A$ to be a conformal vertex algebra in general, rather than necessarily a positive-energy vertex operator algebra, so from now on, we will simply assume that $A$ is a simple conformal vertex algebra extension of $U\otimes V$ that satisfies the conclusions of Theorem \ref{thm:fund_coset}, as well as some additional conditions:
\begin{assum}\label{assum:finite_I}
 Let $U$ and $V$ be simple self-contragredient vertex operator algebras and assume in addition:
 \begin{enumerate}
  \item There is an injective conformal vertex algebra homomorphism $\iota_A: U\otimes V\rightarrow A$ where $A$ is a simple conformal vertex algebra.
   
 \item The conformal vertex algebra $A$ is semisimple as a $U\otimes V$-module, that is,
 \begin{equation*}
  A\cong\bigoplus_{i\in I} U_i\otimes V_i
 \end{equation*}
 where the $U_i$ are distinct simple $U$-modules and the $V_i$ are semisimple $V$-modules. We use $0\in I$ to denote the index such that $U_0=U$. For now, we assume the index set $I$ is finite, but later we will discuss the generalization to infinite index sets.
 
 \item The vertex subalgebras $U$ and $V$ form a dual pair in $A$ in the sense that $V_0=V$ and
 \begin{equation*}
\dim\hom_V (V,V_i) = \delta_{i,0} = \dim\hom_V(V_i,V). 
\end{equation*}
for all $i\in I$. 
 
\item The $U$-modules $U_i$ for $i\in I$ are objects of a semisimple braided ribbon tensor category $\cU$ of $U$-modules which is a locally-finite abelian category and is closed under contragredients. We take $\lbrace U_i\rbrace_{i\in\widetilde{I}}$ to be a set of equivalence class representatives of simple modules in $\cU$, where $\til{I}$ is a set containing $I$.
 
\item The $V_i$ for $i\in I$ are objects of a braided tensor category $\cV$ of $V$-modules which is closed under submodules, quotients, and contragredients. 
 \end{enumerate}

\end{assum}

\begin{rema}
We are not assuming here that $\mathcal{V}$ is rigid, but as discussed at the end of Section \ref{subsec:VOAs}, $\cV$ has a contragredient functor and a twist. Moreover, the dual of any object in $\cU$ is the contragredient module.
\end{rema}

Let $\cU_A$ be the semisimple abelian subcategory of $\cU$ with simple objects $U_i$ for $i\in I$, and similarly let $\cV_A$ be the full subcategory of $\cV$ whose objects are isomorphic to direct sums of the $V_i$ for $i\in I$. Our goal is to show that $\cU_A$ and $\cV_A$ are tensor subcategories of $\cU$ and $\cV$, respectively, and that they are braid-reversed tensor equivalent, so that in particular $\cV_A$ is rigid.

We will need the category $\cC$ of $U\otimes V$-modules whose objects are isomorphic to direct sums of modules $M\otimes W$, where $M$ is an object of $\cU$ and $W$ is an object of $\cV$. Since we assume $\cU$ is locally finite, all intertwining operator spaces $\cV_{M_1, M_2}^{M_3}\cong\hom_U(M_1\tens_U M_2,M_3)$ are finite dimensional. Then we can apply Theorem 5.2, Remark 5.3, and Theorem 5.5 of \cite{CKM2} to conclude that $\cC$ admits braided tensor category structure as described in Section \ref{subsec:VOAs}, and because $\cU$ is semisimple, the braided tensor category structure on $\cC$ is equivalent to that on the Deligne product $\cU\boxtimes\cV$. Specifically, if $M_1$, $M_2$ are modules in $\cU$ equipped with tensor product intertwining operator $\cY^U_{M_1,M_2}$ of type $\binom{M_1\tensU M_2}{M_1\,M_2}$ and $W_1$, $W_2$ are modules in $\cV$ equipped with tensor product intertwining operator $\cY^V_{W_1,W_2}$ of type $\binom{W_1\tensV W_2}{W_1\,W_2}$, then we can use the $U\otimes V$-module intertwining operator $\cY^U_{M_1,M_2}\otimes\cY^V_{W_1,W_2}$ to identify
\begin{equation*}
 (M_1\otimes W_1)\tensC(M_2\otimes W_2) = (M_1\tensU M_2)\otimes(W_1\tensV W_2).
\end{equation*}
Under this identification, the unit, associativity, and braiding isomorphisms in $\cC$ are identified with (vector space) tensor products of the corresponding isomorphisms in $\cU$ and $\cV$. Moreover, for modules $M$ in $\cU$ and $W$ in $\cV$, $M'\otimes W'$ is a contragredient of $M\otimes W$ in $\cC$. Under the identification $(M'\otimes W')\tens_\cC(M\otimes W) = (M'\tens_U M)\otimes(W'\tens_V W)$, the evaluation $e_{M\otimes W}$ is identified with $e_M\otimes e_W$.

Now since the index set $I$ in Assumption \ref{assum:finite_I} is so far assumed finite, $A$ is an object of $\cC$ and Theorem 3.2 and Remark 3.3 of \cite{HKL} identify $A$ as a commutative algebra object in $\cC$. In particular, the vertex operator $Y_A$ induces  a multiplication homomorphism
\begin{equation*}
 \mu_A: A\tensC A\rightarrow A
\end{equation*}
which satisfies associativity and commutativity, and the injection
\begin{equation*}
 \iota_A: U\otimes V\rightarrow A
\end{equation*}
 together with $\mu_A$ satisfies unitality. Since $A$ is a semisimple $U\otimes V$-module, we have a $U\otimes V$-module homomorphism $\varepsilon_A: A\rightarrow U\otimes V$ such that $\varepsilon_A\circ\iota_A=\Id_{U\otimes V}$.

 We also have a category $\repA$ whose objects are pairs $(X,\mu_X)$ with $X$ an object of $\cC$ and
 \begin{equation*}
  \mu_X: A\tensC X\rightarrow X
 \end{equation*}
an associative and unital $A$-action. The category $\repA$ is a tensor category \cite{KO, CKM1}, where for objects $X_1$, $X_2$ in $\repA$, the tensor product $X_1\tens_A X_2$ is obtained as the cokernel of a certain $\cC$-morphism into $X_1\tens_\cC X_2$. We use $I_{X_1,X_2}: X_1\tens_\cC X_2\rightarrow X_1\tens_A X_2$ to denote the corresponding cokernel morphism in $\cC$, and we set $\cY^A_{X_1,X_2}=I_{X_1,X_2}\circ\cY_{X_1,X_2}$, where $\cY_{X_1,X_2}$ is the tensor product intertwining operator of type $\binom{X_1\tens_\cC X_2}{X_1\,X_2}$ in $\cC$. For any object $(X,\mu_X)$ in $\repA$, we also set $Y_X=\mu_X\circ\cY_{A,X}$, a $U\otimes V$-module intertwining operator of type $\binom{X}{A\,X}$.

The unit object of the tensor category $\repA$ is $(A,\mu_A)$, and from \cite[Section 3.5.4]{CKM1}, the left and right unit isomorphisms in $\repA$ satisfy
\begin{equation}\label{eqn:RepA_unit}
 l^A_X(\cY^A_{A,X}(a,x)b)=Y_X(a,x)b,\hspace{2em} r^A_X(\cY^A_{X,A}(b,x)a)= e^{xL(-1)}Y_X(a,e^{-\pi i} x)b
\end{equation}
for $a\in A$, $b\in X$. By \cite[Proposition 3.62]{CKM1}, the associativity isomorphism in $\repA$ for objects $X_1$, $X_2$, $X_3$ satisfies
\begin{align}\label{eqn:RepA_assoc}
 \overline{\cA^A_{X_1,X_2,X_3}} & \big(\cY^A_{X_1,X_2\tens_A X_3}(b_1, r_1)\cY^A_{X_2,X_3}(b_2,r_2)b_3\big)=\cY^A_{X_1\tens_A X_2,X_3}(\cY^A_{X_1,X_2}(b_1, r_1-r_2)b_2, r_2)b_3
\end{align}
for $b_1\in X_1$, $b_2\in X_2$, $b_3\in X_3$, and any $r_1,r_2\in\RR_+$ such that $r_1>r_2>r_1-r_2>0$. As before, we substitute positive real numbers into intertwining operators using the real-valued branch of logarithm.

There is an induction tensor functor from $\cC$ to $\repA$, and we will need its restriction to $\cU$, which we embed into $\cC$ via $M\mapsto M\otimes V$. Specifically, we have
\begin{align*}
 \cF: \cU & \rightarrow \repA\nonumber\\
  M & \mapsto A\tensC(M\otimes V)\nonumber\\
  f & \mapsto \Id_A\tensC(f\otimes\Id_V)
\end{align*}
The induction functor $\cF$ is a tensor functor, with the $\repA$-isomorphism
\begin{equation*}
 r_A: \cF(U) = A\tensC(U\otimes V)\rightarrow A
\end{equation*}
and a natural isomorphism
\begin{equation*}
 f: \tensA\circ(\cF\times\cF)\rightarrow\cF\circ\tensU
\end{equation*}
compatible with the unit and associativity isomorphisms of $\cU$ and $\repA$. For modules $M_1$, $M_2$ in $\cU$, the isomorphism $f_{M_1,M_2}$ is characterized by the commutativity of
\begin{equation*}
 \xymatrixcolsep{4pc}
 \xymatrix{ \cF(M_1)\tensC\cF(M_2) \ar[rd]^{\widetilde{f}_{M_1,M_2}} \ar[d]_{I_{\cF(M_1),\cF(M_2)}} & \\
 \cF(M_1)\tensA\cF(M_2) \ar[r]_{f_{M_1,M_2}} & \cF(M_1\tensU M_2) \\
 },
\end{equation*}
where $\widetilde{f}_{M_1,M_2}$ is the composition
\begin{align*}
 ( A\tensC  (M_1\otimes V)) & \tensC(A\tensC(M_2\otimes V)) \xrightarrow{assoc.} (A\tensC ((M_1\otimes V)\tensC A))\tensC(M_2\otimes V)\nonumber\\
  & \xrightarrow{(\Id_A\tensC\cR^{-1}_{A,M_1\otimes V})\tensC\Id_{M_2\otimes V}} (A\tensC(A\tensC(M_1\otimes V)))\tensC(M_2\otimes V)\nonumber\\
  & \xrightarrow{assoc.} (A\tensC A)\tensC((M_1\otimes V)\tensC(M_2\otimes V))\nonumber\\ &\xrightarrow{\mu_A\tensC(\Id_{M_1\tensU M_2}\otimes l_V=r_V)} A\tensC ((M_1\tensU M_2)\otimes V).
\end{align*}
The arrows marked $assoc.$ denote suitable compositions of associativity isomorphisms and we have identified $(M_1\otimes V)\tensC(M_2\otimes V)=(M_1\tens_U M_2)\otimes(V\tensV V)$ for the last arrow.

We conclude this subsection with the following essential lemma. It is the same as \cite[Lemma 4.6]{CKM2} but in a slightly more general setting since here we are not assuming $\cV$ is rigid; see also \cite[Lemma 1.20]{KO}. Here we present a somewhat alternative proof:
\begin{lemma}\label{lem:prime_involution}
 There is an involution $i\mapsto i'$ of the index set $I$ such that $U_{i'}\cong U_i'$ and $V_{i'}\cong V_i'$ for all $i\in I$. In particular, there are isomorphisms $p_i: U_{i'}\rightarrow U_i'$ and $q_i: V_{i'}\rightarrow V_i'$ such that the diagram
 \begin{equation*}
  \xymatrixcolsep{7pc}
  \xymatrix{
  (U_{i'}\otimes V_{i'})\tens_\cC (U_i\otimes V_i) \ar[r]^(.65){\mu_A\vert_{(U_{i'}\otimes V_{i'})\tens_\cC(U_i\otimes V_i)}} \ar[d]^{(p_i\otimes q_i)\tens_\cC\Id_{U_i\otimes V_i}} & A \ar[d]^{\varepsilon_A} \\
  (U_i'\otimes V_i')\tens_\cC(U_i\otimes V_i) \ar[r]^(.6){e_{U_i\otimes V_i}=e_{U_i}\otimes e_{V_i}} & U\otimes V \\
  }
 \end{equation*}
commutes for $i\in I$.
\end{lemma}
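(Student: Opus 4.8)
# Proof Proposal for Lemma \ref{lem:prime_involution}

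The plan is to extract the involution $i \mapsto i'$ from the multiplication and counit maps on $A$, using the fact that $A$ is a \emph{simple} conformal vertex algebra. The key structural input is that $\varepsilon_A \circ \mu_A \colon A \tens_\cC A \to U \otimes V$ is a nondegenerate pairing in a suitable sense, because $A$ is simple: the ideal generated by any nonzero summand $U_i \otimes V_i$ must be all of $A$, which forces the composite map to be nonzero on $(U_i \otimes V_i) \tens_\cC (U_j \otimes V_j)$ for at least one $j$. More precisely, I would first decompose $\varepsilon_A \circ \mu_A$ along the direct sum decomposition $A \cong \bigoplus_{i\in I} U_i \otimes V_i$ on both factors, writing it as a family of $U\otimes V$-homomorphisms
\begin{equation*}
 \mu_{i,j} \colon (U_i \otimes V_i) \tens_\cC (U_j \otimes V_j) \cong (U_i \tens_U U_j) \otimes (V_i \tens_V V_j) \longrightarrow U \otimes V,
\end{equation*}
and argue that for each $i$ there is a unique $j = i'$ with $\mu_{i,j} \neq 0$. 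Simplicity of $A$ (plus the dual pair condition, which pins down multiplicities) should give both existence and uniqueness of such a $j$.

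Next I would identify $j = i'$ with the prime of $i$ in the tensor-categorical sense. Since $U\otimes V \cong \vac_\cC$ is simple, a nonzero morphism $(U_i \tens_U U_j)\otimes(V_i \tens_V V_j) \to U\otimes V$ restricts to a nonzero pairing between a simple summand of $U_i \tens_U U_j$ and $U$ on the $\cU$-side, and between a simple summand of $V_i \tens_V V_j$ and $V$ on the $\cV$-side (here I use that $\cC$-morphisms into a Deligne-product-type category factor through tensor products of $\cU$- and $\cV$-morphisms). By the defining property of contragredients (Definition \ref{def:contragredient}) and the fact that in the semisimple rigid category $\cU$ we have $\hom_U(U_i \tens_U U_j, U) \neq 0$ iff $U_j \cong U_i'$, this forces $U_j \cong U_i'$. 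On the $\cV$-side, the dual pair hypothesis $\dim\hom_V(V, V_k) = \delta_{k,0}$ together with $\dim\hom_V(V_k, V) = \delta_{k,0}$ is exactly what I need: a nonzero map $V_i \tens_V V_j \to V$ together with the adjunction property of the contragredient $V_i'$ forces $V_j \cong V_i'$ (and simultaneously shows $V_i'$ is actually one of the $V_k$, i.e.\ lies in $\cV_A$). That $i \mapsto i'$ is an involution follows from the commutativity of $\mu_A$, which makes $\mu_{i,j} \neq 0$ symmetric in $i,j$.

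Finally, to produce the specific isomorphisms $p_i \colon U_{i'} \to U_i'$ and $q_i \colon V_{i'} \to V_i'$ making the stated square commute, I would define them via the universal property of the evaluation. Concretely, the nonzero homomorphism $\mu_{i',i} \colon (U_{i'} \otimes V_{i'}) \tens_\cC (U_i \otimes V_i) \to U\otimes V$ composed with the structure available on $\cC \simeq \cU\boxtimes\cV$ factors as $\mu_{i',i} = \mu_{i'}^\cU \otimes \mu_{i'}^\cV$ where $\mu_{i'}^\cU \colon U_{i'}\tens_U U_i \to U$ and $\mu_{i'}^\cV \colon V_{i'}\tens_V V_i \to V$ are nonzero. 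Since $\hom_U(U_{i'}\tens_U U_i, U) \cong \hom_U(U_{i'}, U_i')$ is one-dimensional, $\mu_{i'}^\cU$ corresponds to a nonzero, hence invertible, map $p_i \colon U_{i'}\to U_i'$; and $p_i$ can be rescaled so that $e_{U_i} \circ (p_i \tens_U \Id_{U_i}) = \mu_{i'}^\cU$ exactly, by the universal property of $e_{U_i}$. Similarly $\mu_{i'}^\cV$ corresponds via $\hom_V(V_{i'}\tens_V V_i, V)\cong\hom_V(V_{i'},V_i')$ — which is one-dimensional and consists of isomorphisms, again by the dual pair condition combined with semisimplicity of $\cV_A$ — to an isomorphism $q_i$, rescaled so that $e_{V_i}\circ(q_i\tens_V\Id_{V_i}) = \mu_{i'}^\cV$. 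Then $e_{U_i}\otimes e_{V_i} = e_{U_i\otimes V_i}$ composed with $(p_i\otimes q_i)\tens_\cC\Id$ equals $\mu_{i',i} = \varepsilon_A \circ \mu_A$ restricted appropriately, which is precisely the commutativity of the displayed diagram.

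The main obstacle I anticipate is the bookkeeping on the $\cV$-side: since $\cV$ is not assumed rigid, I cannot invoke a coevaluation for $V_i$, and I must extract that $\hom_V(V_{i'}, V_i')$ is one-dimensional and spanned by an isomorphism purely from the dual pair multiplicity hypothesis together with semisimplicity — this is where the hypothesis $\dim\hom_V(V_i, V) = \delta_{i,0}$ (beyond just $\dim\hom_V(V, V_i) = \delta_{i,0}$) does real work. A secondary subtlety is making the factorization $\mu_{i',i} = p_i^\flat \otimes q_i^\flat$ through $\cU$- and $\cV$-morphisms rigorous, which relies on the identification of the braided tensor structure on $\cC$ with that of the Deligne product $\cU\boxtimes\cV$ recalled earlier in this section.
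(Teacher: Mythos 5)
Your overall strategy matches the paper's: decompose $\varepsilon_A\circ\mu_A$ into homomorphisms $(U_{i}\otimes V_{i})\tens_\cC(U_j\otimes V_j)\to U\otimes V$, use simplicity of $A$ to show for each $i$ there is at least one $j$ with this nonzero, use irreducibility and distinctness of the $U_j$ to pin down the unique $j=i'$ with $U_{i'}\cong U_i'$, and then factor the resulting map as $p_i\otimes q_i$ through the Deligne-product structure on $\cC$. The $\cU$-side of your argument is fine.

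However, there is a genuine gap on the $\cV$-side, precisely in the step you flagged as a ``secondary subtlety.'' You assert that $\hom_V(V_{i'},V_i')$ ``is one-dimensional and consists of isomorphisms, again by the dual pair condition combined with semisimplicity of $\cV_A$.'' This is circular: at this point in the argument the $V_i$ are only assumed to be semisimple $V$-modules, not simple, and it is exactly one of the main conclusions of the theorem (not an available hypothesis) that $\cV_A$ is semisimple with the $V_i$ as its distinct simple objects. A nonzero map $\mu_{i'}^\cV: V_{i'}\tens_V V_i\to V$ gives, via the universal property of contragredients, only a nonzero map $q_i: V_{i'}\to V_i'$; between semisimple modules of unknown decomposition this need not be injective, surjective, or have a one-dimensional $\hom$-space. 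The paper closes this gap with two separate arguments you have omitted: injectivity of $q_i$ comes from the simplicity of $A$ as a \emph{right} $A$-module (any $w\in U_{i'}\otimes\ker q_i$ would generate a proper right ideal of $A$, forcing $w=0$), and surjectivity then follows by a graded-dimension comparison: injectivity of $q_i$ gives $\dim(V_{i'})_{[h]}\le\dim(V_i')_{[h]}$ for all $h$, while injectivity of $q_{i'}$ (applied to the index $i'$) gives the reverse inequality, so the graded dimensions agree and the injection $q_i$ is a bijection. You should add these two steps; the dual-pair multiplicity hypothesis alone does not substitute for them.
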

\begin{proof}
 For any $i,j\in I$, the universal property of contragredients in $\cC$ implies there is a unique $U\otimes V$-module homomorphism $\varphi_{ij}: U_j\otimes V_j\rightarrow U_i'\otimes V_i'$ such that the diagram
 \begin{equation}\label{eqn:phi_ij_def}
  \xymatrixcolsep{7pc}
  \xymatrix{
  (U_{j}\otimes V_{j})\tens_\cC (U_i\otimes V_i) \ar[r]^(.65){\mu_A\vert_{(U_{j}\otimes V_{j})\tens_\cC(U_i\otimes V_i)}} \ar[d]^{\varphi_{ij}\tens_\cC\Id_{U_i\otimes V_i}} & A \ar[d]^{\varepsilon_A} \\
  (U_i'\otimes V_i')\tens_\cC(U_i\otimes V_i) \ar[r]^(.6){e_{U_i\otimes V_i}} & U\otimes V \\
  }
 \end{equation}
commutes. Given $i$, we first need to show that $\varphi_{ij}$ is non-zero for at least one $j\in I$.

Since $A$ is simple, it is irreducible as a (left) $A$-module (see \cite[Remark 3.9.8]{LL}). This means that for any non-zero $u_i\otimes v_i\in U_i\otimes V_i$, the span of coefficients in $Y_A(a,x)(u_i\otimes v_i)$, as $a$ ranges over $A$, is equal to $A$ (see \cite[Proposition 4.5.6]{LL}). In particular, there must be some $j\in I$ and $a_j\in U_j\otimes V_j$ such that 
\begin{equation*}
 \varepsilon_A(Y_A(a_j,x)(u_i\otimes v_i))\neq 0,
\end{equation*}
and then for some $n\in\ZZ$ we have
\begin{align*}
 0  \neq \varepsilon_A(\pi_n(Y_A(a_j,1)(u_i\otimes v_i))) &= \varepsilon_A\pi_n(\overline{\mu_A}(\cY_{U_j\otimes V_j, U_i\otimes V_i}(a_j,1)(u_i\otimes v_i)))\nonumber\\ & = (\varepsilon_A\circ\mu_A)\left(\pi_n(\cY_{U_j\otimes V_j, U_i\otimes V_i}(a_j,1)(u_i\otimes v_i))\right).
\end{align*}
Thus for some $j\in I$, we have $\varepsilon_A\circ\mu_A\vert_{(U_j\otimes V_j)\tens_\cC(U_i\otimes V_i)}\neq 0$ and so $\varphi_{ij}\neq 0$.

Now for $j\in I$, $\varphi_{ij}: U_j\otimes V_j\rightarrow U_i'\otimes V_i'$ is a homomorphism of weak $U$-modules. So because $U_j$ is irreducible, $U_i'\otimes V_i'$ contains a $U$-submodule isomorphic to $U_j$ whenever $\varphi_{ij}\neq 0$. But since $U_i'$ is also irreducible (see \cite[Proposition 5.3.2]{FHL}), this can only happen if $U_j\cong U_i'$. Since by assumption the $U_j$ are distinct, we see that $U_j\cong U_i'$, and consequently $\varphi_{ij}\neq 0$, for at most one $j\in I$.

Combining the previous two paragraphs, we see that for any $i\in I$, there is a unique $i'\in I$ such that $\varphi_{ii'}\neq 0$ and $U_{i'}\cong U_i'$. Since $$ U_{(i')'}\cong U_{i'}'\cong (U_i')'\cong U_i$$
(see \cite[Proposition 5.3.1]{FHL}), we have $(i')' =i$ so that $i\mapsto i'$ is an involution on $I$. It remains to show that $\varphi_{ii'}$ factors as $p_i\otimes q_i$ where $p_i: U_{i'}\rightarrow U_i'$ and $q_i: V_{i'}\rightarrow V_i'$ are isomorphisms. Since $U_{i'}$ and $U_i'$ are isomorphic irreducible $U$-modules, the factorization $\varphi_{ii'}=p_i\otimes q_i$ is easy:
\begin{equation*}
 \hom_{U\otimes V}(U_{i'}\otimes V_{i'}, U_i'\otimes V_i')\cong\hom_U(U_{i'}, U_i')\otimes\hom_V(V_{i'},V_i')
\end{equation*}
with $\hom_U(U_{i'}, U_i')$ one-dimensional and spanned by some isomorphism $p_i$. To show that $q_i$ is an isomorphism, we will first show that each $q_i$ is injective. 

For $i\in I$, consider $U_{i'}\otimes\ker q_i$, which is contained in $\ker \varphi_{ii'}$. Since we have seen that $\varphi_{ji'}=0$ if $j\neq i$, it follows from \eqref{eqn:phi_ij_def} that
\begin{equation*}
 \varepsilon_A\circ\mu_A\vert_{(U_{i'}\otimes\ker q_i)\tens_\cC A} =0.
\end{equation*}
But this means that for any $w\in U_{i'}\otimes\ker q_i$,
\begin{equation*}
 \varepsilon_A(Y_A(w,x)a)=0
\end{equation*}
for any $a\in A$. Since the right $A$-submodule of $A$ generated by $w$ is spanned by coefficients of $Y_A(w,x)a$ as $a$ ranges over $A$, analogously to \cite[Proposition 4.5.6]{LL}, it follows that $w$ generates a proper right submodule of $A$. Since $A$ is simple and right submodules are ideals, $w=0$ and $\ker q_i=0$.

Now to show $q_i$ is also surjective, note that $q_i$, as a $V$-module homomorphism, preserves conformal weight gradings. Thus since $q_i$ is injective,
\begin{equation*}
 \dim\,(V_{i'})_{[h]}\leq\dim\,(V_i')_{[h]} 
\end{equation*}
for any $h\in\CC$. But since $q_{i'}$ is also injective, we have
\begin{equation*}
 \dim\,(V_i')_{[h]} =\dim\,(V_i)_{[h]}^* =\dim\,(V_i)_{[h]} =\dim\,(V_{(i')'})_{[h]}\leq\dim\,(V_{i'}')_{[h]} =\dim\,(V_{i'})_{[h]}.
\end{equation*}
Thus the graded dimensions of $V_{i'}$ and $V_i'$ agree, which means any grading-preserving injection is also a surjection. Consequently, $q_i$ is a $V$-module isomorphism.
\end{proof}

\subsection{Infinite-order extensions}

In the setting of Assumption \ref{assum:finite_I}, we now consider the possibility that the index set $I$ is infinite. In this case, $A$ is no longer an object of the braided tensor category $\cC$ of $U\otimes V$-modules; instead, it is an object of the direct limit completion $\ind(\cC)$ studied in \cite{CMY}. The direct limit completion is the category of all weak $U\otimes V$-modules that are unions (equivalently, sums) of submodules that are objects of $\cC$. Since objects of $\cC$ are grading-restricted generalized $U\otimes V$-modules, objects of $\ind(\cC)$ are generalized $U\otimes V$-modules in the sense that they are the direct sums of their generalized $L(0)$-eigenspaces. But modules in $\ind(\cC)$ do not usually satisfy the grading-restriction conditions of a grading-restricted generalized $U\otimes V$-module.

Theorem 1.1 in \cite{CMY} gives conditions under which the vertex algebraic braided tensor category structure on $\cC$ extends to $\ind(\cC)$. Specifically, we need to add the following assumptions to our current setting:
\begin{assum}\label{assum:inf_order}
 If the index set $I$ of Assumption \ref{assum:finite_I} is infinite, we assume:
 \begin{enumerate}
  \item Every $U\otimes V$-module in $\cC$ is finitely generated. Since $\cU$ is semisimple and locally finite, so that every $U$-module in $\cU$ is finitely generated, it is sufficient to assume that every $V$-module in $\cV$ is finitely generated.
  
  \item For every intertwining operator $\cY$ of type $\binom{X_3}{X_1\,X_2}$ where $X_1$, $X_2$ are grading-restricted generalized $U\otimes V$-modules in $\cC$ and $X_3$ is a generalized $U\otimes V$-module in $\ind(\cC)$, the image $\im\cY\subseteq X_3$ is a module in $\cC$.
 \end{enumerate}
\end{assum}
\begin{rema}
By \cite[Corollary 2.14]{CMY}, which is an enhancement of the Key Theorem of \cite{Mi}, both assumptions above are satisfied if $\cC$ is the category of $C_1$-cofinite grading-restricted generalized $U\otimes V$-modules.
\end{rema}

Assumption \ref{assum:inf_order}(1), together with conditions in Assumption \ref{assum:finite_I}, is needed to ensure that $\ind(\cC)$ is a braided tensor category with a tensor product bifunctor extending the one on $\cC$; we will continue use $\tens_\cC$ to denote the tensor product on $\ind(\cC)$. By \cite[Theorems 6.2(1) and 6.3]{CMY}, we also have, for any generalized modules $X_1, X_2$ in $\ind(\cC)$, an intertwining operator $\cY_{X_1,X_2}$ of type $\binom{X_1\tens_\cC X_2}{X_1\,X_2}$ such that the tensor product of morphisms and the unit, associativity, and braiding isomorphisms in $\ind(\cC)$ have the exact same description as in Section \ref{subsec:VOAs}. Assumption \ref{assum:inf_order}(2) is needed to ensure that for any generalized modules $X_1, X_2$ in $\ind(\cC)$, the pair $(X_1\tens_\cC X_2,\cY_{X_1,X_2})$ satisfies the universal property of a tensor product in $\ind(\cC)$. In particular, this means by \cite[Proposition 4.26]{HLZ3} that for any generalized module $X$ in $\ind(\cC)$, the functors $X\tens_\cC\bullet$ and $\bullet\tens_\cC X$ are right exact.

Now by \cite[Theorem 5.7]{CKM2} and \cite[Theorem 7.5]{CMY}, the conformal vertex algebra $A$ is a commutative algebra in the braided tensor category $\ind(\cC)$. As the construction in \cite{CKM1} of the tensor category $\repA$ only requires $\ind(\cC)$ to be a braided tensor category with right exact tensor product, we still get this tensor category structure on $\repA$. Moreover, because the braided tensor category structure on $\ind(\cC)$ is characterized in terms of the intertwining operators $\cY_{X_1,X_2}$, just as in Section \ref{subsec:VOAs}, the unit and associativity isomorphisms in $\repA$ are still described by \eqref{eqn:RepA_unit} and \eqref{eqn:RepA_assoc}. (Although these results in \cite{CKM1} were stated assuming that $A$ is a commutative algebra in a braided tensor category of grading-restricted generalized modules for a subalgebra, the grading restriction conditions are not used in the proofs.)

This discussion shows that, as long as Assumption \ref{assum:inf_order} holds, we may now freely allow the index set $I$ for the decomposition $A\cong\bigoplus_{i\in I} U_i\otimes V_i$ to be infinite. So from now on, we will use the single notation $\cC$ to refer to the braided tensor category $\cU\tens\cV$ in case $I$ is finite, and to $\ind(\cU\tens\cV)$ in case $I$ is infinite. Of course, if $I$ is finite, we do not need the additional conditions of Assumption \ref{assum:inf_order}.

We would like to reduce Assumption \ref{assum:inf_order}(2) to conditions on the individual categories $\cU$ and $\cV$. We can do so using the following proposition; since the proof is technical and fairly long, we place it in Appendix \ref{app:intw_op_proof}:
\begin{propo}\label{prop:IndC_intw_op_cond}
Let $U$ and $V$ be vertex operator algebras, and let $\cU$ and $\cV$ be categories of grading-restricted generalized $U$- and $V$-modules, respectively. Assume moreover that:
\begin{itemize}
 \item Every $U$-module in $\cU$ is semisimple and $C_1$-cofinite.
 
 \item The category $\cV$ is closed under submodules, quotients, and finite direct sums.
 
\end{itemize}
 Then if $\cV$ satisfies the intertwining operator condition of Assumption \ref{assum:inf_order}(2), so does the category $\cC=\cU\tens\cV$ of $U\otimes V$-modules (whose objects are isomorphic to finite direct sums of modules $M\otimes W$ where $M$ is in $\cU$ and $W$ is in $\cV$).
\end{propo}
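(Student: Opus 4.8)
The plan is to take an intertwining operator $\cY$ of type $\binom{X_3}{X_1\,X_2}$ with $X_1,X_2$ objects of $\cC=\cU\tens\cV$ and $X_3$ a generalized $U\otimes V$-module in $\ind(\cC)$, and to show $\im\cY$ is an object of $\cC$. First I would reduce to the case $X_1=M_1\otimes W_1$ and $X_2=M_2\otimes W_2$ with $M_1,M_2$ \emph{simple} objects of $\cU$. Indeed, an intertwining operator is additive in each of its first two arguments, so $\im\cY$ is the sum of the images of the intertwining operators obtained by restricting $\cY$ to the summands of $X_1$ and $X_2$; since every object of $\cU$ is a finite direct sum of simple objects (a semisimple grading-restricted module that is $C_1$-cofinite is a finite direct sum of simples) and $\cC$ is an abelian category closed under subobjects, finite direct sums, and quotients, it is enough to treat $M_1$ and $M_2$ simple.

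Next I would decompose $X_3$ as a $U\otimes V$-module. Since every object of $\cC$ is, as a $U$-module, a direct sum of simple objects of $\cU$, and $X_3$ is a union of such objects, Theorem~\ref{thm:decomp_of_UV-mod} gives $X_3\cong\bigoplus_{j\in J}N_j\otimes Y_j$, where the $N_j$ are distinct simple $U$-modules in $\cU$ and $Y_j=\hom_U(N_j,X_3)$ is a generalized $V$-module; moreover $Y_j$ is the directed union of the $\hom_U(N_j,X_3^\gamma)$ as $X_3^\gamma$ ranges over the objects of $\cC$ exhausting $X_3$, and each $\hom_U(N_j,X_3^\gamma)$ is a finite direct sum of objects of $\cV$, so $Y_j$ lies in $\ind(\cV)$. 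Composing $\cY$ with the grading-preserving $U\otimes V$-module projection $\pi_j\colon\overline{X_3}\to\overline{N_j\otimes Y_j}$ onto the $N_j$-isotypic component yields an intertwining operator $\cY_j$ of type $\binom{N_j\otimes Y_j}{M_1\otimes W_1\,\,M_2\otimes W_2}$ with $\pi_j(\im\cY)=\im\cY_j$. Because the $N_j$ are distinct and simple, $\bigoplus_j N_j\otimes Y_j$ is the $U$-isotypic decomposition of $X_3$, so the $U\otimes V$-submodule $\im\cY$ splits as $\im\cY=\bigoplus_j\im\cY_j$; and by Theorem~\ref{thm:decomp_of_UV-mod} again, each $\im\cY_j$ has the form $N_j\otimes Z_j$ with $Z_j=\hom_U(N_j,\im\cY_j)$ a $V$-submodule of $Y_j$.

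The key input is the structure of intertwining operators for the tensor product vertex operator algebra $U\otimes V$: I would use the identification $\cV^{N\otimes Y}_{M_1\otimes W_1,\,M_2\otimes W_2}\cong\cV^{N}_{M_1,M_2}\otimes_{\CC}\cV^{Y}_{W_1,W_2}$, so that $\cY_j=\sum_{\alpha=1}^{n_j}\cY^U_{j,\alpha}\otimes\cY^V_{j,\alpha}$ with $\cY^U_{j,\alpha}$ of type $\binom{N_j}{M_1\,M_2}$ and $\cY^V_{j,\alpha}$ of type $\binom{Y_j}{W_1\,W_2}$. This has two consequences. First, $\cY_j\neq 0$ forces $\cV^{N_j}_{M_1,M_2}\neq 0$; using $C_1$-cofiniteness of $M_1$ and $M_2$ together with Miyamoto's theorem \cite{Mi} and the techniques of \cite{CMY}, one shows this occurs for only finitely many $j$, so $\im\cY\cong\bigoplus_{j\in J_0}N_j\otimes Z_j$ for a finite subset $J_0\subseteq J$. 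Second, reading off coefficients gives $\im\cY_j\subseteq N_j\otimes\bigl(\sum_\alpha\im\cY^V_{j,\alpha}\bigr)$, hence $Z_j\subseteq\sum_\alpha\im\cY^V_{j,\alpha}$; each $\im\cY^V_{j,\alpha}$ lies in $\cV$ because $\cV$ satisfies the intertwining operator condition of Assumption~\ref{assum:inf_order}(2) (the $W_i$ are grading-restricted objects of $\cV$ and $Y_j\in\ind(\cV)$), so $\sum_\alpha\im\cY^V_{j,\alpha}$, being a quotient of their finite direct sum, lies in $\cV$, and thus $Z_j\in\cV$ because $\cV$ is closed under submodules. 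Combining the two consequences, $\im\cY\cong\bigoplus_{j\in J_0}N_j\otimes Z_j$ is an object of $\cC$, as desired.

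I expect the main obstacle to be making the third paragraph rigorous: establishing the tensor-product decomposition of $U\otimes V$-intertwining operators in the needed generality, particularly when the ``$V$-side'' module $Y_j$ is only a generalized $V$-module and need not be grading-restricted, and extracting from $C_1$-cofiniteness of $M_1$ and $M_2$ the finiteness statement that $\im\cY$ involves only finitely many isomorphism classes of simple $U$-modules. These points require careful formal-variable and convergence bookkeeping together with Miyamoto's results on images of intertwining operators, which is why the full argument is deferred to Appendix~\ref{app:intw_op_proof}.
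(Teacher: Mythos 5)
Your outline matches the paper's: reduce to $M_1,M_2$ simple in $\cU$, decompose $X_3$ via Theorem~\ref{thm:decomp_of_UV-mod} into $\bigoplus_j M_3^j\otimes W_3^j$ with $W_3^j\in\ind(\cV)$, bound the contributing $j$'s using $C_1$-cofiniteness of $M_1,M_2$ and Miyamoto's Key Theorem, and then use the ADL-type factorization $\cY=\sum_i\cY^U_i\otimes\cY^V_i$ to pull everything back to closure properties of $\cV$. So the organizational skeleton is the same.

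However, the single identification you use as a black box — $\cV^{N\otimes Y}_{M_1\otimes W_1,\,M_2\otimes W_2}\cong\cV^{N}_{M_1,M_2}\otimes_\CC\cV^{Y}_{W_1,W_2}$ with $Y\in\ind(\cV)$ — is precisely what the paper spends most of Appendix~\ref{app:intw_op_proof} proving, and you are right that this is where the real work lies. The known results (\cite[Theorem 2.10]{ADL}) only cover grading-restricted targets, and the obstacle is not merely formal: when $Y$ has infinitely many $L_V(0)$-generalized eigenvalues, the finite sum on the right is not automatic. The paper gets around this as follows. Lemma~\ref{lem:UV_intw_to_U_intw} shows that for fixed $w_1\in W_1$, $w_2\in W_2$, $h\in\CC$ the ``slice'' $\cY_{w_1,w_2;h}=x^{-L_V(0)}P_h\cY(\cdot\otimes x^{L_V(0)}w_1,x)(\cdot\otimes x^{L_V(0)}w_2)$ is a $U$-intertwining operator into $M_3\otimes(W_3)_{[h]}$. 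Miyamoto's Key Theorem plus $C_1$-cofiniteness of $M_1,M_2$ force $\im\cY_{w_1,w_2;h}\subseteq M_3\otimes W^3_{w_1,w_2;h}$ for a \emph{finite-dimensional} subspace, so one can expand $\cY_{w_1,w_2;h}=\sum_i\cY^U_i\otimes F^{(i)}_h(w_1,w_2)$ over a finite basis $\{\cY^U_i\}$ of $\cV^{M_3}_{M_1,M_2}$. Defining $\cY^V_i$ by assembling the $F^{(i)}_h$, the remaining (and nontrivial) step is to verify these are genuine $V$-intertwining operators; the paper does this by showing each $\cY^V_i$ is a finite linear combination of the intertwining operators $\cY_{m_1,m_2;m_3'}$ supplied by the reversed-roles version of Lemma~\ref{lem:UV_intw_to_U_intw}, using a Zhu-algebra/$A(U)$-bimodule duality argument together with finite-dimensionality of $A(M_1)$ (again via $C_1$-cofiniteness of $M_1$). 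So your proof strategy is sound and correctly locates the hard point, but the bulk of the actual content — the slice-and-reassemble construction of the $\cY^V_i$ and the Zhu-algebra inversion — is what you would still need to supply.

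Two smaller comments. First, the finiteness claim that only finitely many $j$ contribute should be justified slightly more carefully than ``$\cY_j\neq 0$ forces $\cV^{N_j}_{M_1,M_2}\neq 0$'': one needs to pass from the $U\otimes V$-intertwining operator $\cY_j$ to a nonzero $U$-intertwining operator into $N_j$, which the paper does by composing $\cY_{w_1,w_2;h}$ with $P_{j,h}$ and then a linear functional $w_3'$ on $(W_3^j)_{[h]}$; your proposal implicitly gets this for free from the tensor-factorization that is yet to be proved, which is fine once that factorization is in hand. Second, the finiteness bound itself comes from a dimension count using a surjective intertwining operator into $\bigoplus_k M_3^{j_k}$ together with \cite[Lemma 3]{Mi}, as in Lemma~\ref{lem:C1-cofin_bound}; citing ``\cite{Mi} and techniques of \cite{CMY}'' is the right place to look.
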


\begin{rema}
Using Theorem \ref{thm:C1V} and \cite[Corollary 2.14]{CMY}, all the conditions of Assumption \ref{assum:finite_I}, Assumption \ref{assum:inf_order}, and Proposition \ref{prop:IndC_intw_op_cond} pertaining to $\cV$ hold if $\cV$ can be taken to be the category $\cC_V^1$ of $C_1$-cofinite grading-restricted generalized $V$-modules and if $\cC^1_V$ is closed under contragredient modules. The $C_1$-cofinite module category is in fact closed under contragredients for all Virasoro and many affine vertex operator algebras \cite{CJORY, CY}.
\end{rema}

\section{The mirror equivalence theorem}\label{sec:proofs}

In this section, we prove the main result of this paper. In the notation of Section \ref{sec:mirror_equiv_setting}, we will show that the categories $\cU_A$ and $\cV_A$ are in fact tensor subcategories of $\cU$ and $\cV$, respectively, and that $\cU_A$ and $\cV_A$ are braid-reversed equivalent. This will show in particular that the semisimple $V$-modules $V_i$ appearing in the decomposition of $A$ as a weak $V$-module are simple, distinct, and rigid objects of $\cV$. 

To get the desired equivalence of categories from $\cU_A$ to $\cV_A$, recall from Section \ref{sec:mirror_equiv_setting} the induction tensor functor from $\cU$ to the tensor category $\repA$. Thus our first task is to construct a functor from $\repA$ to $V$-modules whose composition with induction restricts to a functor from $\cU_A$ to $\cV_A$.

\subsection{The invariants functor}\label{subsec:inv_functor}

In this subsection we construct the functor of $U$-invariants from $\repA$ to the category of weak $V$-modules. For a module $X$ in $\repA$, or more generally for $X$ a weak $U$-module, we define
\begin{equation*}
 X^U=\ker L_U(-1)\subseteq X,
\end{equation*}
where
\begin{equation*}
 L_U(-1)=\mathrm{Res}_x\,Y_X(\omega_U\otimes\vac_V,x).
\end{equation*}
By Corollary 4.7.6 and Proposition 4.7.7 of \cite{LL}, $X^U$ is linearly isomorphic to $\hom_U(U,X)$. For a morphism $f: X_1\rightarrow X_2$ in $\repA$, we define
\begin{equation*}
 f^U=f\vert_{X_1^U}: X_1^U\rightarrow X_2^U.
\end{equation*}
The image of $f^U$ is indeed contained in $X_2^U$ because $f$ is in particular a homomorphism of weak $U$-modules and thus commutes with $L_U(-1)$.

Let $\cF(\cU)$ be the full subcategory of $\repA$ whose objects are isomorphic to $\cF(M)$ for $M$ an object of $\cU$. Note that $\cF(\cU)$ is a monoidal subcategory of $\repA$ because $\cF(M_1)\tensA\cF(M_2)\cong\cF(M_1\tens_U M_2)$ for any modules $M_1$, $M_2$ in $\cU$.
\begin{lemma}
 The $U$-invariants functor restricts to a functor $\cG: \cF(\cU)\rightarrow\cV$.
\end{lemma}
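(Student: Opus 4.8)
The plan is as follows. The assignment $X\mapsto X^U$, $f\mapsto f^U$ is already a functor from $\repA$ to the category of weak $V$-modules: the subspace $X^U=\ker L_U(-1)$ is stable under the action of $V$, since $V$ commutes with $L_U(-1)$, and the restriction of an $A$-module map is in particular a map of weak $V$-modules. So the only substantive point is to show that $\cF(M)^U$ is an object of $\cV$ for every $M$ in $\cU$; an arbitrary object $X$ of $\cF(\cU)$ is isomorphic to some $\cF(M)$, and since $X\mapsto X^U$ carries isomorphic weak $U\otimes V$-modules to isomorphic weak $V$-modules, $X^U\cong\cF(M)^U$ will then lie in $\cV$ as well, $\cV$ being a full subcategory closed under isomorphism. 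Functoriality is automatic, since for a morphism $f$ in $\cF(\cU)$ (in particular a $U\otimes V$-module homomorphism) the restriction $f^U=f\vert_{X_1^U}$ is a $V$-module homomorphism and $\Id^U=\Id$, $(g\circ f)^U=g^U\circ f^U$.

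To identify $\cF(M)^U$, I would first compute $\cF(M)$ as a $U\otimes V$-module. Writing $A\cong\bigoplus_{i\in I}U_i\otimes V_i$ in $\cC$ and using that $\bullet\tens_\cC(M\otimes V)$ commutes with direct sums — automatic when $I$ is finite, and part of the construction of the tensor product on $\ind(\cC)$ when $I$ is infinite — together with the identification $(U_i\otimes V_i)\tens_\cC(M\otimes V)=(U_i\tens_U M)\otimes(V_i\tens_V V)$ and the right unit isomorphism $V_i\tens_V V\cong V_i$, one obtains a $U\otimes V$-module isomorphism
\begin{equation*}
 \cF(M)\cong\bigoplus_{i\in I}(U_i\tens_U M)\otimes V_i .
\end{equation*}
Since $L_U(-1)$ is part of the $U$-module structure, it acts on the $i$th summand as $L_U(-1)\otimes\Id_{V_i}$, so that
\begin{equation*}
 \cF(M)^U\cong\bigoplus_{i\in I}(U_i\tens_U M)^U\otimes V_i
\end{equation*}
as $V$-modules, with the $V$-action carried by the factors $V_i$.

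It then remains to check that this sum is finite and involves only the $V_i$. By Corollary 4.7.6 and Proposition 4.7.7 of \cite{LL}, $(U_i\tens_U M)^U\cong\hom_U(U,U_i\tens_U M)$, which by rigidity of $\cU$ is isomorphic to $\hom_U(U_i',M)$; this is finite dimensional since $\cU$ is locally finite, and it vanishes unless the simple module $U_i'$ is a summand of $M$. As $\cU$ is semisimple and locally finite, $M$ is a finite direct sum of simple objects, and the $U_i$ — hence also the $U_i'$ — for $i\in I$ are pairwise nonisomorphic, so $\hom_U(U_i',M)\neq 0$ for only finitely many $i\in I$. Therefore $\cF(M)^U$ is a finite direct sum of copies of the modules $V_i$, $i\in I$, hence an object of $\cV_A$ and in particular a grading-restricted generalized $V$-module in $\cV$ — note that this holds even though $\cF(M)$ itself need not be grading restricted when $I$ is infinite. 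I expect the only genuine obstacle to be this last identification of $\cF(M)^U$ with a finite sum of the $V_i$: one must make the direct-sum computation of $\cF(M)$ precise in the infinite-$I$ case, and then invoke rigidity together with local finiteness of $\cU$ to force the sum over $i\in I$ to be finite.
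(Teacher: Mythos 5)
Your proposal is correct and follows essentially the same route as the paper's proof: both use the decomposition $\cF(M)\cong\bigoplus_{i\in I}(U_i\tens_U M)\otimes V_i$, the identification $(\cdot)^U\cong\hom_U(U,\cdot)$, and rigidity of $\cU$ to convert $\hom_U(U, U_i\tens_U M)$ into $\hom_U(U_i',M)$, with local finiteness and semisimplicity of $\cU$ forcing finiteness of the resulting sum. The only organizational difference is that the paper reduces to simple $M = U_j$ at the outset (via additivity of $\cF$ and semisimplicity of $\cU$), which makes each $(U_i\tens_U U_j)^U$ zero- or one-dimensional and the final identification immediate, whereas you carry a general $M$ through and invoke that $M$ is a finite sum of simples only at the end.
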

\begin{proof}
 Since $\cU$ is semisimple and $\cF$ is an additive functor, it is sufficient to show that $\cF(U_j)^U$ is a module in $\cV$ for any $j\in\widetilde{I}$. In fact,
 \begin{align*}
  \cF(U_j)^U & = (A\tensC(U_j\otimes V))^U\nonumber\\
  & = \bigoplus_{i\in I} (U_i\tens_U U_j)^U\otimes(V_i\tensV V)\nonumber\\
  & \cong\left\lbrace\begin{array}{ccl}
                  V_{j'} & \mathrm{if} & j\in I\\
                  0 & \mathrm{if} & j\in\widetilde{I}\setminus I
                 \end{array} \right. 
 \end{align*}
because by rigidity of $\cU$ and simplicity of $U_i$ and $U_j$, $\hom_U(U, U_i\tensU U_j)$ is non-zero (and one-dimensional) precisely when $U_j\cong U_i'\cong U_{i'}$.
\end{proof}
We record the following corollary from the proof of the lemma:
\begin{corol}\label{cor:Phi_of_Ui}
 For $i\in\til{I}$,
 \begin{equation*}
  \cG(\cF(U_i')) \cong\left\lbrace\begin{array}{ccl}
                  V_{i} & \mathrm{if} & i\in I\\
                  0 & \mathrm{if} & i\in\widetilde{I}\setminus I
                 \end{array} \right. .
 \end{equation*}
\end{corol}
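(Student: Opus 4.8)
The plan is to read the corollary straight off the computation made in the proof of the preceding lemma. Fix $i\in\til I$. Since $\cU$ is semisimple and closed under contragredients, $U_i'$ is isomorphic to $U_j$ for a unique $j\in\til I$, and as $\cF$ and the $U$-invariants functor both respect isomorphisms we get $\cG(\cF(U_i'))\cong\cG(\cF(U_j))=\cF(U_j)^U$. The lemma already shows $\cF(U_j)^U\cong V_{j'}$ when $j\in I$ and $\cF(U_j)^U\cong 0$ when $j\in\til I\setminus I$, so the whole problem reduces to locating $j$ and, in the first case, identifying $j'$.

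For this I would invoke Lemma \ref{lem:prime_involution}. If $i\in I$, that lemma gives $U_i'\cong U_{i'}$ with $i'\in I$, hence $j=i'$; since $(i')'=i$, we obtain $\cG(\cF(U_i'))\cong\cF(U_{i'})^U\cong V_{(i')'}=V_i$, which is the asserted formula. If instead $i\in\til I\setminus I$, I would argue $j\notin I$ by contradiction: were $j\in I$, then from $U_i'\cong U_j$ we would get $U_i\cong U_j'\cong U_{j'}$ with $j'\in I$ (again by Lemma \ref{lem:prime_involution}), forcing $i=j'\in I$ since the $U_k$, $k\in\til I$, are pairwise non-isomorphic --- contradicting $i\notin I$. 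Hence $j\in\til I\setminus I$ and $\cG(\cF(U_i'))\cong 0$.

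The statement therefore follows with essentially no obstacle; the only point requiring care is the index bookkeeping --- keeping track of the passage from $U_i'$ to its representative $U_j$ in $\til I$, and using the involution $k\mapsto k'$ of $I$ from Lemma \ref{lem:prime_involution} consistently, so that the membership $j\in I$ matches $i\in I$ and the identification $j'=i$ comes out correctly.
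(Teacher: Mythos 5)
Your proof is correct and matches the paper's intent: the paper gives no separate argument for the corollary beyond ``We record the following corollary from the proof of the lemma,'' which is precisely the translation you carry out (pass from $U_i'$ to its representative $U_j$ in $\til{I}$, use the lemma's computation of $\cF(U_j)^U$, and resolve the indices via the involution $i\mapsto i'$ of Lemma \ref{lem:prime_involution}). The index bookkeeping, including the contradiction argument for $i\in\til{I}\setminus I$, is handled correctly.
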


Now we would like to show that $\cG$ is a lax monoidal functor from $\cF(\cU)$ to $\cV$. First, there is an isomorphism $\psi: \cG(A)=\vac_U\otimes V\rightarrow V$. We also need a natural transformation
\begin{equation*}
 g: \boxtimes_V\circ(\cG\times\cG)\rightarrow\cG\circ\boxtimes_A
\end{equation*}
which is compatible with unit and associativity isomorphisms. To construct such a $g$, we consider modules $X_1$, $X_2$ in $\cF(\cU)$. Recall the (weak) $U\otimes V$-module intertwining operator $\cY^A_{X_1,X_2}=I_{X_1,X_2}\circ\cY_{X_1,X_2}$ of type $\binom{X_1\tens_A X_2}{X_1\,X_2}$. The commutator formula for intertwining operators implies
\begin{equation*}
 L_U(-1)\cY^A_{X_1,X_2}(b_1, x)b_2 = \cY^A_{X_1,X_2}(L_U(-1)b_1,x)b_2 +\cY^A_{X_1,X_2}(b_1,x)L_U(-1)b_2
\end{equation*}
for $b_1\in X_1$, $b_2\in X_2$. This means that $\cY^A_{X_1,X_2}$ restricts to a $V$-module intertwining operator of type $\binom{(X_1\tensA X_2)^U}{X_1^U\,X_2^U}$: the $L_V(-1)$-derivative property in particular holds because
\begin{align*}
 \dfrac{d}{dx}\cY^A_{X_1,X_2}(w_1,x) & =\cY^A_{X_1,X_2}(L(-1)w_1,x)\nonumber\\
 &=\cY^A_{X_1,X_2}(L_U(-1) w_1,x) +\cY^A_{X_1,X_2}(L_V(-1) w_1,x)\nonumber\\
 &=\cY^A_{X_1,X_2}(L_V(-1) w_1,x)
\end{align*}
for $w_1\in X_1^U$. Then the universal property of the tensor product of $V$-modules in $\cV$ implies there is a unique $V$-module homomorphism
\begin{equation*}
 g_{X_1,X_2}: \cG(X_1)\tensV\cG(X_2)\rightarrow\cG(X_1\tensA X_2)
\end{equation*}
such that
\begin{equation*}
 g_{X_1,X_2}\circ\cY^V_{\cG(X_1),\cG(X_2)} =\cY^A_{X_1,X_2}\vert_{\cG(X_1)\otimes\cG(X_2)}.
\end{equation*}
To show that the homomorphisms $g_{X_1,X_2}$ define a natural transformation, consider homomorphisms $f_1: X_1\rightarrow\til{X}_1$ and $f_2: X_2\rightarrow\til{X}_2$ in $\cF(\cU)$. Then using the description from \cite[Section 3.5.2]{CKM1} of the tensor product of morphisms in $\repA$, we have
\begin{align*}
 \cG(f_1\tensA f_2)\circ g_{X_1,X_2}\circ\cY^V_{\cG(X_1),\cG(X_2)} & = (f_1\tensA f_2)\circ\cY^A_{X_1,X_2}\vert_{\cG(X_1)\otimes\cG(X_2)}\nonumber\\
 & =\cY^A_{\til{X}_1,\til{X}_2}\circ(f_1\otimes f_2)\vert_{\cG(X_1)\otimes\cG(X_2)}\nonumber\\
 & = g_{\til{X}_1,\til{X}_2}\circ\cY^V_{\cG(\til{X}_1),\cG(\til{X}_2)}\circ(\cG(f_1)\otimes\cG(f_2))\nonumber\\
 & =g_{\til{X}_1,\til{X}_2}\circ(\cG(f_1)\tens_V\cG(f_2))\circ\cY^V_{\cG(X_1),\cG(X_2)}.
\end{align*}
Since $\cY^V_{\cG(X_1),\cG(X_2)}$ is a surjective intertwining operator, it follows that
\begin{equation*}
 \cG(f_1\tensA f_2)\circ g_{X_1,X_2}=g_{\til{X}_1,\til{X}_2}\circ(\cG(f_1)\tensV\cG(f_2)),
\end{equation*}
as required.
\begin{propo}
 The triple $(\cG,g,\psi)$ is a lax monoidal functor from $\cF(\cU)$ to $\cV$.
\end{propo}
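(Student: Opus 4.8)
The plan is to verify the three coherence conditions for a lax monoidal functor: compatibility of $g$ with the associativity isomorphisms, and compatibility with the left and right unit isomorphisms. Throughout, the key technique is that all the relevant structure maps in $\cV$, in $\repA$, and the natural transformation $g$ itself are characterized by their interaction with tensor product intertwining operators (via the universal property of tensor products of $V$-modules), so each coherence diagram can be checked by precomposing with a suitable iterated intertwining operator and reducing to an identity of formal series. Since $\cY^V_{\cG(X_1),\cG(X_2)}$ is surjective, and more generally iterated tensor product intertwining operators generate the target modules, equality after precomposition suffices.

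First I would treat the unit constraints, which are the easier cases. For the left unit: we must show $\cG(l^A_X)\circ g_{A,X} = l^V_{\cG(X)}\circ(\psi\tens_V\Id_{\cG(X)})$ as maps $\cG(A)\tens_V\cG(X)\to\cG(X)$. Precomposing with $\cY^V_{\cG(A),\cG(X)}$ and using the defining property of $g_{A,X}$, the left side becomes $\cG(l^A_X)\circ\cY^A_{A,X}\vert_{\cG(A)\otimes\cG(X)} = l^A_X\circ\cY^A_{A,X}\vert = Y_X\vert$ by \eqref{eqn:RepA_unit}. On the right side, one uses the characterization \eqref{def:unit} of $l^V$ together with the fact that $\psi$ identifies $\cG(A)=\vac_U\otimes V$ with $V$ and that $Y_X$ restricted to $(\vac_U\otimes V)\otimes X^U$ is the $V$-module vertex operator $Y_{X^U}$; both sides reduce to $Y_{X^U}$. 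The right unit case is analogous, using the second formulas in \eqref{def:unit} and \eqref{eqn:RepA_unit} and the fact that $L_V(-1)$ agrees with $L(-1)$ on $X^U$ since $L_U(-1)$ vanishes there.

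Next, the associativity constraint: for $X_1,X_2,X_3$ in $\cF(\cU)$ we must show
\begin{align*}
\cG(\cA^A_{X_1,X_2,X_3})\circ g_{X_1,X_2\tens_A X_3}\circ(\Id_{\cG(X_1)}\tens_V g_{X_2,X_3})
&= g_{X_1\tens_A X_2,X_3}\circ(g_{X_1,X_2}\tens_V\Id_{\cG(X_3)})\\
&\quad{}\circ\cA^V_{\cG(X_1),\cG(X_2),\cG(X_3)}.
\end{align*}
The strategy is to precompose both sides with the iterated intertwining operator $\cY^V_{\cG(X_1),\cG(X_2)\tens_V\cG(X_3)}(\,\cdot\,,r_1)\cY^V_{\cG(X_2),\cG(X_3)}(\,\cdot\,,r_2)(\,\cdot\,)$ for suitable $r_1>r_2>r_1-r_2>0$, apply the defining property of $g$ twice on each side, and then invoke the characterization \eqref{eqn:RepA_assoc} of $\cA^A$ in $\repA$ on the left and \eqref{def:assoc} of $\cA^V$ in $\cV$ on the right. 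Both sides collapse to the same iterated $\cY^A$-expression restricted to $U$-invariants, using that $\cG(\cA^A_{X_1,X_2,X_3})$ is simply the restriction of $\overline{\cA^A_{X_1,X_2,X_3}}$ to $U$-invariants and that the convergence and equality of iterated series for the $\repA$-associativity, guaranteed by \cite{CKM1}, restrict to the invariant subspaces. One needs the elementary observation that $(X_2\tens_A X_3)^U = \cG(X_2\tens_A X_3)$ receives $\cY^A_{X_2,X_3}$ restricted to $X_2^U\otimes X_3^U$, so the nested invariants match up. I expect this associativity pentagon to be the main obstacle, mainly for bookkeeping reasons: one must carefully track the real-analytic substitutions into the intertwining operators, confirm that the relevant sums over generalized weight spaces converge on the invariant subspaces (which they do, being restrictions of the convergent sums in $\repA$), and make sure the identifications $\cG(X_1\tens_A X_2)=(X_1\tens_A X_2)^U$ are used consistently; no genuinely new analytic input beyond what \eqref{eqn:RepA_assoc} already provides is required.
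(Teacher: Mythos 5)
Your proposal is correct and follows essentially the same approach as the paper: precompose each coherence diagram with the relevant (iterated) tensor product intertwining operator, use the defining property of $g$ together with the characterizations \eqref{def:unit}, \eqref{eqn:RepA_unit}, \eqref{def:assoc}, \eqref{eqn:RepA_assoc} of the unit and associativity isomorphisms, and conclude by surjectivity (for the unit diagrams) or by the spanning property of projections of iterated intertwining operators (for the associativity pentagon). Your explicit observation that $L_V(-1)=L(-1)$ on $X^U$ is indeed the reason the right-unit formulas match up, and is implicitly used in the paper's computation.
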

\begin{proof}
 We have already observed that $\cG$ is a functor from $\cF(U)$ to $\cV$, $g$ is a natural transformation from $\tensV\circ(\cG\times\cG)$ to $\cG\circ\tensA$, and $\psi$ is an isomorphism from $\cG(A)$ to $V$. We just need to show that $g$ and $\psi$ are compatible with the unit isomorphisms in $\repA$ and $\cV$, and that $g$ is compatible with the associativity isomorphisms in $\repA$ and $\cV$.
 
 For the unit isomorphisms we need to show that the diagrams
 \begin{equation}\label{eqn:G_unit_compat}
  \xymatrixcolsep{4pc}
\begin{matrix}  \xymatrix{
  \cG(A)\tensV\cG(X) \ar[r]^{g_{A,X}} \ar[d]^{\psi\tens_V\Id_{\cG(X)}} & \cG(A\tens_A X) \ar[d]^{\cG(l^A_{X})} \\
  V\tensV\cG(X) \ar[r]^{l_{\cG(X)}} & \cG(X) \\
  }
  \end{matrix}, \hspace{2.5em}
  \xymatrixcolsep{4pc}
  \begin{matrix}\xymatrix{
  \cG(X)\tensA\cG(A) \ar[r]^{g_{X,A}} \ar[d]^{\Id_{\cG(X)}\tensV\psi} & \cG(X\tensA A) \ar[d]^{\cG(r^A_X)} \\
  \cG(X)\tensV V \ar[r]^{r_{\cG(X)}} & \cG(X) \\
  }\end{matrix}
 \end{equation}
commute for any module $X$ in $\cF(\cU)$. In fact, for $\vac_U\otimes v\in\cG(A)=\vac_U\otimes V$ and $w\in\cG(X)$,
\begin{align*}
 \cG(l^A_X) & \circ g_{A,X}\left(\cY^V_{\cG(A),\cG(X)}(\vac_U\otimes v, x)w\right) = \cG(l^A_X)\left(\cY^A_{A,X}(\vac_U\otimes v, x)w\right)\nonumber\\ & = Y_X(\vac_U\otimes v,x)w = Y_{\cG(X)}(v,x)w = l_{\cG(X)}\left(\cY^V_{V,\cG(X)}(v,x)w\right)\nonumber\\ & = l_{\cG(X)}\circ(\psi\tensV\Id_{\cG(X)})\left(\cY^V_{\cG(A),\cG(X)}(\vac_U\otimes v,x)w\right)
\end{align*}
and
\begin{align*}
 \cG(r^A_X) & \circ  g_{X,A}  \left(\cY^V_{\cG(X),\cG(A)}(w, x)(\vac_U\otimes v)\right)  = \cG(r^A_X)\left(\cY^A_{X,A}(w, x)(\vac_U\otimes v)\right)\nonumber\\ & = e^{xL(-1)}Y_X(\vac_U\otimes v,e^{-\pi i} x)w = e^{xL(-1)}Y_{\cG(X)}(v,-x)w = r_{\cG(X)}\left(\cY^V_{\cG(X),V}(w,x)v\right)\nonumber\\
 & = r_{\cG(X)}\circ(\Id_{\cG(X)}\tensV\psi)\left(\cY^V_{\cG(X),\cG(A)}(w,x)(\vac_U\otimes v)\right),
\end{align*}
where we have used \eqref{eqn:RepA_unit} and \eqref{def:unit}. Now since $\cY^V_{\cG(A),\cG(X)}$ and $\cY^V_{\cG(X),\cG(A)}$ are surjective intertwining operators, the diagrams \eqref{eqn:G_unit_compat} commute.

For the associativity isomorphisms, we need to show that the diagram
\begin{equation}\label{eqn:g_assoc_compat}
 \xymatrixcolsep{6pc}
\begin{matrix} \xymatrix{
 \cG(X_1)\tensV(\cG(X_2)\tensV\cG(X_3)) \ar[r]^{\cA_{\cG(X_1),\cG(X_2),\cG(X_3)}} \ar[d]^{\Id_{\cG(X_1)}\tensV g_{X_2,X_3}} & (\cG(X_1)\tensV\cG(X_2))\tensV\cG(X_3) \ar[d]^{g_{X_1,X_2}\tensV\Id_{\cG(X_3)}} \\
 \cG(X_1)\tensV\cG(X_2\tensA X_3) \ar[d]^{g_{X_1,X_2\tensA X_3}} & \cG(X_1\tensA X_2)\tensV \cG(X_3) \ar[d]^{g_{X_1\tensA X_2,X_3}} \\
 \cG(X_1\tensA(X_2\tensA X_3)) \ar[r]^{\cG(\cA^A_{X_1,X_2,X_3})} & \cG((X_1\tensA X_2)\tensA X_3)
 }\end{matrix}
\end{equation}
commutes for any modules $X_1$, $X_2$, $X_3$ in $\cF(\cU)$. In fact, for $w_1\in\cG(X_1)$, $w_2\in\cG(X_2)$, $w_3\in\cG(X_3)$, $w'\in\cG((X_1\tens_A X_2)\tens_A X_3)'$, and $r_1,r_2\in\RR$ such that $r_1>r_2>r_1-r_2>0$,
\begin{align*}
 & \big\langle w', \overline{g_{X_1\tensA X_2,X_3}\circ(g_{X_1,X_2}\tensV\Id_{\cG(X_3)})\circ\cA_{\cG(X_1),\cG(X_2),\cG(X_3)}}\cdot\nonumber\\
 &\hspace{10em}\cdot\big(\cY^V_{\cG(X_1),\cG(X_2)\tensV\cG(X_3)}(w_1,r_1)\cY^V_{\cG(X_2),\cG(X_3)}(w_2,r_2)w_3\big)\big\rangle\nonumber\\
 &\hspace{1em} = \big\langle w', \overline{g_{X_1\tensA X_2,X_3}\circ(g_{X_1,X_2}\tensV\Id_{\cG(X_3)})}\cdot\nonumber\\
 &\hspace{10em}\cdot\big(\cY^V_{\cG(X_1)\tens_V\cG(X_2), \cG(X_3)}\left(\cY^V_{\cG(X_1),\cG(X_2)}(w_1,r_1-r_2)w_2,r_2\right)w_3\big)\big\rangle\nonumber\\
 &\hspace{1em} = \big\langle w', \overline{g_{X_1\tensA X_2,X_3}}\big(\cY^V_{\cG(X_1\tensA X_2), \cG(X_3)}\left(\cY^A_{X_1,X_2}(w_1,r_1-r_2)w_2,r_2\right)w_3\big)\big\rangle\nonumber\\
 &\hspace{1em} = \left\langle w', \cY^A_{X_1\tensA X_2, X_3}\left(\cY^A_{X_1,X_2}(w_1,r_1-r_2)w_2,r_2\right)w_3\right\rangle\nonumber\\
 &\hspace{1em} = \big\langle w',\overline{\cG(\cA^A_{X_1,X_2,X_3})}\left(\cY^A_{X_1,X_2\tens_A X_3}(w_1,r_1)\cY^A_{X_2,X_3}(w_2,r_2)w_3\right)\big\rangle\nonumber\\
 &\hspace{1em} = \big\langle w',\overline{\cG(\cA^A_{X_1,X_2,X_3})\circ g_{X_1,X_2\tensA X_3}}\big(\cY^V_{\cG(X_1),\cG(X_2\tens_A X_3)}(w_1,r_1)\cY^A_{X_2,X_3}(w_2,r_2)w_3\big)\big\rangle\nonumber\\
 &\hspace{1em} = \big\langle w',\overline{\cG(\cA^A_{X_1,X_2,X_3})\circ g_{X_1,X_2\tensA X_3}\circ(\Id_{\cG(X_1)}\tensV g_{X_2,X_3})}\cdot\nonumber\\
 &\hspace{10em}\cdot\big(\cY^V_{\cG(X_1),\cG(X_2)\tens_V \cG(X_3)}(w_1,r_1)\cY^V_{\cG(X_2),\cG(X_3)}(w_2,r_2)w_3\big)\big\rangle
\end{align*}
using \eqref{def:assoc} and \eqref{eqn:RepA_assoc}. Here we substitute positive real numbers into intertwining operators using the real-valued branch of logarithm. Now since \cite[Corollary 7.17]{HLZ5} implies that $\cG(X_1)\tensV(\cG(X_2)\tensV\cG(X_3))$ is spanned by projections to the conformal weight spaces of vectors $\cY^V_{\cG(X_1),\cG(X_2)\tensV\cG(X_3)}(w_1,r_1)\cY^V_{\cG(X_2),\cG(X_3)}(w_2,r_2)w_3$, the diagram \eqref{eqn:g_assoc_compat} commutes.
\end{proof}

\subsection{braid-reversed tensor equivalence}

We now compose the tensor functor $\cF$ and the lax tensor functor $\cG$ to obtain the lax tensor functor $\Phi=\cG\circ\cF$, equipped with isomorphism $\varphi=\psi\circ\cG(r_A): \Phi(U)\rightarrow V$ and natural transformation
\begin{equation*}
 J: \tensV\circ(\Phi\times\Phi)\rightarrow\Phi\circ\tensU
\end{equation*}
such that
\begin{equation*}
 J_{M_1,M_2}=\cG(f_{M_1,M_2})\circ g_{\cF(M_1),\cF(M_2)}
\end{equation*}
for objects $M_1$, $M_2$ in $\cU$. The homomorphism $J_{M_1,M_2}$ is defined in terms of the commutativity of the following diagram:
\begin{equation}\label{eqn:J_def}
 \xymatrixcolsep{4pc}
 \xymatrix{
 \cF(M_1)\otimes\cF(M_2) \ar[rd]^{\cY^A_{\cF(M_1),\cF(M_2)}} \ar[r]^{\cY_{\cF(M_1),\cF(M_2)}} & \cF(M_1)\tensC\cF(M_2) \ar[d]^{I_{\cF(M_1),\cF(M_2)}} \ar[rd]^{\til{f}_{M_1,M_2}} & \\
 \cG(\cF(M_1))\otimes\cG(\cF(M_2)) \ar[d]^{\cY^V_{\cG(\cF(M_1)),\cG(\cF(M_2))}} \ar@{^{(}->}[u] & \cF(M_1)\tensA \cF(M_2) \ar[r]^{f_{M_1,M_2}} & \cF(M_1\tensU M_2) \\
 \cG(\cF(M_1))\tens_V\cG(\cF(M_2)) \ar[r]^(.525){g_{\cF(M_1),\cF(M_2)}} & \cG(\cF(M_1)\tensA\cF(M_2)) \ar[r]^(.52){\cG(f_{M_1,M_2})} \ar@{^{(}->}[u] & \cG(\cF(M_1\tensU M_2)) \ar@{^{(}->}[u] \\
 }
\end{equation}
Here arrows in the diagram labeled by intertwining operators are interpreted as the collection of linear maps corresponding to each coefficient of a monomial of the formal variable in the intertwining operator. The diagram commutes for each such fixed monomial.

Note that $J$ will not be a natural isomorphism if $I\subsetneq\til{I}$ because if $i\in\til{I}\setminus I$, then $\Phi(U_i)\tens_V\Phi(U_i')=0$ by Corollary \ref{cor:Phi_of_Ui}, while $\Phi(U_i\tens_U U_i')\neq 0$ since it contains a copy of $\Phi(U)\cong V$. However, we shall show that $J$ is an isomorphism when restricted to $\cU_A$. We start with a technical lemma:
\begin{lemma}\label{lem:J_surj_enough}
 For $j\in I$, $\im \Phi(i_{U_j})\subseteq \im J_{U_j,U_j'}$, where $i_{U_j}: U\rightarrow U_j\tens U_j'$ is the coevaluation in the rigid tensor category $\cU$.
\end{lemma}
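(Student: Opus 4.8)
The plan is to reduce the statement to a single non‑vanishing assertion about an ``evaluation pairing,'' using rigidity of $\cU$, semisimplicity of the $V_i$, and the dual pair condition, and then to verify that non‑vanishing by a computation with the induction tensorator $\til f$.

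\emph{Reduction to the pairing.} Since $\cF$ is a (ribbon) tensor functor and $U_j'=U_j^*$ in $\cU$, the object $\cF(U_j)$ is dualizable in $\repA$ with dual $\cF(U_j')$, and the coevaluation $i^A_{\cF(U_j)}:A\to\cF(U_j)\tensA\cF(U_j')$ satisfies $\cF(i_{U_j})=f_{U_j,U_j'}\circ i^A_{\cF(U_j)}\circ r_A$. As $\cG(r_A)$ and $\cG(f_{U_j,U_j'})$ are isomorphisms and $J_{U_j,U_j'}=\cG(f_{U_j,U_j'})\circ g_{\cF(U_j),\cF(U_j')}$, the desired inclusion is equivalent to $\im\cG(i^A_{\cF(U_j)})\subseteq\im g_{\cF(U_j),\cF(U_j')}$. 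I would then identify both sides: via $\cF(U_j)\tensA\cF(U_j')\cong\cF(U_j\tensU U_j')$, semisimplicity of $\cU_A$, Frobenius reciprocity and the dual pair condition, one computes $\hom_{\repA}(A,\cF(U_j\tensU U_j'))\cong\hom_U(U,U_j\tensU U_j')$, which is one‑dimensional, so $i^A_{\cF(U_j)}$ is a nonzero multiple of the inclusion of the unique copy of $A=\cF(U)$; applying $\cG$ and Corollary \ref{cor:Phi_of_Ui}, $\im\cG(i^A_{\cF(U_j)})$ is the summand $\hom_U(U,U_j\tensU U_j')\otimes V_0\cong V_0$ of $\cG(\cF(U_j\tensU U_j'))\cong\bigoplus_{i\in I}\hom_U(U_i,U_j\tensU U_j')\otimes V_{i'}$. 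Because the $V_i$ are semisimple this module is semisimple, and because $\hom_V(V_0,V_{i'})=0$ for $i\neq 0$ the summand $V_0$ occurs in it exactly once; hence any submodule of it that surjects onto $V_0$ must contain that summand. So it suffices to prove $\pi_0\circ J_{U_j,U_j'}\neq 0$, where $\pi_0$ is the projection onto the $V_0$‑summand, and up to a nonzero scalar $\pi_0=\Phi(\bar e_{U_j})$ for a nonzero morphism $\bar e_{U_j}:U_j\tensU U_j'\to U$ in $\cU$.

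\emph{Non‑vanishing of $\dim_\cU(U_j)$.} I would first record that $\dim_\cU(U_j)\neq 0$: the $\repA$‑evaluation $\bar e^A_{\cF(U_j)}=r_A\circ\cF(\bar e_{U_j})\circ f_{U_j,U_j'}:\cF(U_j)\tensA\cF(U_j')\to A$ is epic since $A$ is a simple object of $\repA$, and $\cG=(\,\cdot\,)^U$ is exact on $\cF(\cU)$, so $\cG(\bar e^A_{\cF(U_j)})$ is an epimorphism onto $\cG(A)=V_0$; by the dual pair condition it must be supported on the $V_0$‑summand of its domain, while $\cG(\bar e^A_{\cF(U_j)})\circ\cG(i^A_{\cF(U_j)})=\dim_\cU(U_j)\cdot\Id_{V_0}$ (because $\bar e^A_{\cF(U_j)}\circ i^A_{\cF(U_j)}=\dim_{\repA}\cF(U_j)\cdot\Id_A$ and $\cF$ preserves categorical dimensions), so $\dim_\cU(U_j)=0$ is impossible. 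In particular $\Phi(\bar e_{U_j})\circ\Phi(i_{U_j})=\dim_\cU(U_j)\cdot\Id_{V_0}\neq 0$, which forces $\Phi(\bar e_{U_j})\neq 0$ and hence justifies $\pi_0=c\,\Phi(\bar e_{U_j})$ with $c\neq 0$ (using that $\hom_V(\cG(\cF(U_j\tensU U_j')),V_0)$ is one‑dimensional).

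\emph{The computation, and the main obstacle.} Tracing through \eqref{eqn:J_def}, the map $\Phi(\bar e_{U_j})\circ J_{U_j,U_j'}$, composed with the surjective intertwining operator $\cY^V$, equals $\cG(r_A^{-1})$ applied to the restriction to $U$‑invariants of the intertwining operator $\mathcal{E}:=\bar e^A_{\cF(U_j)}\circ\cY^A_{\cF(U_j),\cF(U_j')}$ of type $\binom{A}{\cF(U_j)\,\cF(U_j')}$, so it remains to show that $\mathcal{E}$ does not vanish on $\cG(\cF(U_j))\otimes\cG(\cF(U_j'))$. By the decomposition $A\cong\bigoplus_{i\in I}U_i\otimes V_i$, the invariants $\cG(\cF(U_j))$ and $\cG(\cF(U_j'))$ are supported on the summands indexed by $j'$ and $j$ respectively, with $U$‑module components the images of the coevaluations $i_{U_{j'}}$ and $i_{U_j}$ adjusted by the isomorphisms $p_i$ of Lemma \ref{lem:prime_involution}. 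Feeding such vectors through the explicit formula for $\til f_{U_j,U_j'}$, and using Lemma \ref{lem:prime_involution} to rewrite $\varepsilon_A\circ\mu_A$ on the pair of summands $(U_{j'}\otimes V_{j'})\tensC(U_j\otimes V_j)$ of $A\tensC A$ as $(e_{U_j}\otimes e_{V_j})\circ((p_j\otimes q_j)\tensC\Id)$, the $U$‑module part of $\mathcal{E}$ on these vectors becomes a rigidity zig‑zag in $\cU$ equal to $\dim_\cU(U_j)\cdot\Id_U$, hence nonzero, while the $V$‑module part is, up to the isomorphism $V_{j'}\cong V_j'$ of Lemma \ref{lem:prime_involution}, the contragredient evaluation $e_{V_j}:V_j'\tensV V_j\to V$, which is nonzero by Proposition \ref{prop:contra_property}. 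Therefore $\mathcal{E}|_{\cG(\cF(U_j))\otimes\cG(\cF(U_j'))}\neq 0$, so $\pi_0\circ J_{U_j,U_j'}\neq 0$, and the lemma follows. The hard part will be this last computation: one must carefully push the coevaluation‑built invariant vectors through the associativity and braiding isomorphisms and the multiplication $\mu_A$ constituting $\til f_{U_j,U_j'}$, keeping track of the formal‑variable coefficients of the intertwining operators, in order to confirm that the $\cU$‑side is precisely the zig‑zag $\dim_\cU(U_j)\cdot\Id_U$ and does not degenerate; this is forced upon us because, unlike in the parallel argument \cite[Theorem 4.7]{McR}, surjectivity of $J$ is not available here.
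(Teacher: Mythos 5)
Your reduction to showing that the $V_0$-component of $J_{U_j,U_j'}$ is nonzero, and your factorization of that component into a $\cU$-side rigidity zig-zag times a $\cV$-side evaluation $e_{V_j}\circ(q_j\tensV\Id_{V_j})$, is exactly the paper's strategy, and the outcome you claim matches the paper's formula $E_j=(\dim_\cU U_j)\,e_{V_j}\circ(q_j\tensV\Id_{V_j})$. Two remarks. First, for $\dim_\cU U_j\neq 0$ the paper simply invokes \cite[Proposition~4.8.4]{EGNO}, which is immediate since $\cU$ is a semisimple ribbon category and $U_j$ is simple; your alternative argument via simplicity of $A$ in $\repA$, exactness of $\cG$ on $\cF(\cU)$, and preservation of the dual-pairing scalar by tensor functors is valid but requires checking a few extra facts (exactness of $\cG$ on $\cF(\cU)$, which holds because objects there are semisimple over $U$; that $\bar e^A_{\cF(U_j)}\neq 0$), so the citation is the cleaner route. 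Second, and more importantly, the multi-page diagram chase that verifies the composition $\varepsilon_A\circ\varphi\circ\cF(e_{U_j'}\circ(\delta_{U_j}\tensU\Id_{U_j'}))\circ\til f_{U_j,U_j'}\circ(\rho\tensC\til\rho)$ really splits as $\bigl((\dim_\cU U_j)\,l_U\bigr)\otimes\bigl(e_{V_j}\circ(q_j\tensV\Id_{V_j})\bigr)$ — via repeated applications of naturality, the hexagon and pentagon axioms, and the rigidity equations — is the actual content of the lemma; you correctly identify it as the hard part and correctly predict its output, but you leave it unverified, which is a real gap in the proposal rather than a routine omission.
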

\begin{proof}
 Both $\Phi(i_{U_j})$ and $J_{U_j,U_j'}$ are $V$-module homomorphisms into $\Phi(U_j\tens_U U_j')$. Since $\cF$ and $\cG$ are additive functors and using Corollary \ref{cor:Phi_of_Ui}, we have
 \begin{align*}
  \Phi(U_j\tens_U U_j')\cong\bigoplus_{i\in\til{I}} N^i_{j j'} \cG(\cF(U_i)) \cong\bigoplus_{i\in I} N^i_{j j'} V_{i'} = V\oplus\bigoplus_{i\in I\setminus\lbrace 0\rbrace} N^i_{j j'} V_{i'},
 \end{align*}
 where $N^k_{ij}$ is the multiplicity of $U_k$ in $U_i\tens_U U_j$. Under this identification of $\Phi(U_j\tensU U_j')$, $\im \Phi(i_{U_j})\subseteq V$ since $\Phi(U)\cong V$ and $\dim \hom_V(V,V_i)=\delta_{i,0}$ for $i\in I$. Thus we need to show that $V\subseteq\im J_{U_j,U_j'}$. Now,
\begin{equation*}
 \bigoplus_{i\in I\setminus\lbrace 0\rbrace} N^i_{j j'} V_{i'}\subseteq\ker \Phi(e_{U_j'}\circ(\delta_{U_j}\tensU\Id_{U_j'}))
\end{equation*}
since $\dim\hom_V(V_i,V)=\delta_{i,0}$ for $i\in I$. So if we can show $\Phi(e_{U_j'}\circ(\delta_{U_j}\tensU\Id_{U_j'}))\circ J_{U_j,U_j'}\neq 0$, the semisimplicity of $\Phi(U_j\tens_U U_j')$ will imply that $\im J_{U_j,U_j'}$ indeed contains the unique copy of $V$ in $\Phi(U_j\tensU U_j')$. (We use the assumption that each $V_i$ is a semisimple $V$-module here.)

In order to show $\Phi(e_{U_j'}\circ(\delta_{U_j}\tensU\Id_{U_j'}))\circ J_{U_j,U_j'}\neq 0$, it is sufficient to show that
\begin{equation*}
 \Phi(e_{U_j'}\circ(\delta_{U_j}\tensU\Id_{U_j'}))\circ\cG(f_{U_j,U_j'})\circ g_{\cF(U_j),\cF(U_j')}\left(\cY^V_{\Phi(U_j),\Phi(U_j')}(w_1,x)w_2\right) \neq 0
\end{equation*}
for some $w_1\in\Phi(U_j)$ and $w_2\in\Phi(U_j')$. By \eqref{eqn:J_def}, this is equivalent to showing
\begin{equation*}
 \cF(e_{U_j'}\circ(\delta_{U_j}\tensU\Id_{U_j'}))\circ\til{f}_{U_j,U_j'}\circ\cY_{\cF(U_j),\cF(U_j')}\vert_{\cG(\cF(U_j))\otimes\cG(\cF(U_j'))}\neq 0.
\end{equation*}
As the codomain of this map is $\cF(U)\cong A$, we can postcompose with $\varepsilon_A\circ\varphi: \cF(U)\rightarrow U\otimes V$, and then it is enough to show
\begin{equation*}
 \varepsilon_A\circ\varphi\circ\cF(e_{U_j'}\circ(\delta_{U_j}\tensU\Id_{U_j'}))\circ\til{f}_{U_j,U_j'}\circ\cY_{\cF(U_j),\cF(U_j')}\vert_{\cG(\cF(U_j)),\cG(\cF(U_j'))}\neq 0.
\end{equation*}

Let us examine $\Phi(U_j)=\cG(\cF(U_j))$ in more detail. We have
\begin{align*}
 \Phi(U_j) & = (A\tensC (U_j\otimes V))^U = \bigoplus_{i\in I} (U_i\tensU U_j)^U\otimes(V_i\tensV V) = (U_{j'}\tensU U_j)^U\otimes(V_{j'}\tensV V).
\end{align*}
Now, this is $\rho(\vac_U\otimes V_{j'})$, where $\rho$ is the $U\otimes V$-module homomorphism
\begin{align*}
 U\otimes V_{j'}\xrightarrow{i_{U_j}\otimes \Id_{V_{j'}}} (U_j\tensU U_j')\otimes V_{j'} & \xrightarrow{\cR_{U_j',U_j}^{-1}\otimes\Id_{V_{j'}}}  (U_j'\tensU U_j)\otimes V_{j'}\nonumber\\
 &\xrightarrow{(p_j^{-1}\tensU \Id_{U_j})\otimes r_{V_{j'}}^{-1}} (U_{j'}\tensU U_j)\otimes(V_{j'}\tensV V). 
\end{align*}
Similarly,
\begin{align*}
 \Phi(U_j') = (A\tensC(U_j'\otimes V))^U & = \bigoplus_{i\in I} (U_i\tensU U_j')^U\otimes(V_i\tensV V)\nonumber\\
 &= (U_j\tensU U_j')^U\otimes(V_j\tensV V) =\til{\rho}(\vac_U\otimes V_j)
\end{align*}
where $\til{\rho}= i_{U_j}\otimes r_{V_j}^{-1}$. Thus we get the commutative diagram
\begin{equation*}
 \xymatrixcolsep{4pc}
 \xymatrix{
 (\vac_U\otimes V_{j'})\otimes(\vac_U\otimes V_j) \ar[r]^(.55){\rho\otimes\til{\rho}} \ar@{^{(}->}[d] & \Phi(U_j)\otimes\Phi(U_j') \ar@{^{(}->}[d] & \\
 (U\otimes V_{j'})\otimes(U\otimes V_j) \ar[r]^(.55){\rho\otimes\til{\rho}} \ar[d]^{\cY_{U\otimes V_{j'}, U\otimes V_j}} & \cF(U_j)\otimes\cF(U_j') \ar[d]^{\cY_{\cF(U_j),\cF(U_j')}} & \\
 (U\otimes V_{j'})\tensC (U\otimes V_j) \ar[r]^(.55){\rho\tensC\til{\rho}} & \cF(U_j)\tensC\cF(U_j') \ar[r] & U\otimes V \\
 }
\end{equation*}
where the bottom right arrow is $ \varepsilon_A\circ\varphi\circ\cF(e_{U_j'}\circ(\delta_{U_j}\tensU\Id_{U_j'}))\circ\til{f}_{U_j,U_j'}$. 

To prove the lemma, then, it is enough to show that
\begin{equation*}
 \varepsilon_A\circ\varphi\circ\cF(e_{U_j'}\circ(\delta_{U_j}\tensU\Id_{U_j'}))\circ\til{f}_{U_j,U_j'}\circ(\rho\tensC\til{\rho})\circ\cY_{U\otimes V_{j'},U\otimes V_j}\vert_{(\vac_U\otimes V_{j'})\otimes(\vac_U\otimes V_j)}\neq 0.
\end{equation*}
Now, $\varepsilon_A\circ\varphi\circ\cF(e_{U_j'}\circ(\delta_{U_j}\tensU\Id_{U_j'}))\circ\til{f}_{U_j,U_j'}\circ(\rho\tensC\til{\rho})$ is a homomorphism 
\begin{equation*}
 (U\otimes V_{j'})\tensC (U\otimes V_j) = (U\tensU U)\otimes(V_{j'}\tensV V_j)\longrightarrow U\otimes V
\end{equation*}
in $\cC$. Thus this composition is given by $l_U\otimes E_j$ for some $V$-module homomorphism $E_j: V_{j'}\tensV V_j\rightarrow V$. Consequently, for $v_{j'}\in V_{j'}$ and $v_j\in V_j$, we have
\begin{align*}
 \varepsilon_A\circ\varphi\circ\cF(e_{U_j'}\circ(\delta_{U_j} & \tensU\Id_{U_j'}))\circ  \til{f}_{U_j,U_j'}\circ(\rho\tensC\til{\rho})\left(\cY_{U\otimes V_{j'},U\otimes V_j}(\vac_U\otimes v_{j'},x)(\vac_U\otimes v_j)\right)\nonumber\\
 & = l_U\left(\cY^U_{U,U}(\vac_U,x)\vac_U\right)\otimes E_j\left(\cY^V_{V_{j'},V_j}(v_{j'},x)v_j\right)\nonumber\\
 & = Y_U(\vac_U,x)\vac_U\otimes E_j\left(\cY^V_{V_{j'},V_j}(v_{j'},x)v_j\right) = \vac_U\otimes E_j\left(\cY^V_{V_{j'},V_j}(v_{j'},x)v_j\right).
\end{align*}
Since $\cY^V_{V_{j'},V_j}$ is surjective, the lemma will be proved if $E_j\neq 0$.

\allowdisplaybreaks

In order to calculate $E_j$, we need to analyze $\varepsilon_A\circ\varphi\circ\cF(e_{U_j'}\circ(\delta_{U_j}\tensU\Id_{U_j'}))\circ\til{f}_{U_j,U_j'}\circ(\rho\tensC\til{\rho})$ in detail: it is the composition
\begin{align*}
 &(U  \otimes V_{j'})  \tensC (U\otimes V_j) \xrightarrow{(i_{U_j}\otimes\Id_{V_{j'}})\tensC\Id_{U\otimes V_j}} ((U_j\tens_U U_j')\otimes V_{j'})\tensC (U\otimes V_j)\nonumber\\
 & \xrightarrow{(\cR_{U_j',U_j}^{-1}\otimes\Id_{V_{j'}})\tensC \Id_{U\otimes V_j}} ((U_j'\tensU U_j)\otimes V_{j'})\tensC (U\otimes V_j)\nonumber\\
 & \xrightarrow{( (p_j^{-1}\tensU\Id_{U_j})\otimes r_{V_{j'}}^{-1} )\tensC (i_{U_j}\otimes r_{V_j}^{-1})} ((U_{j'}\tensU U_j)\otimes(V_{j'}\tensV V))\tensC ((U_j\tensU U_j')\otimes(V_j\tensV V))\nonumber\\
 & = ((U_{j'}\otimes V_{j'})\tensC(U_j\otimes V))\tensC ((U_j\otimes V_j)\tensC (U_j'\otimes V))\nonumber\\
 &\longrightarrow (A\tensC(U_j\otimes V))\tensC(A\tensC(U_j'\otimes V))\xrightarrow{assoc.} (A\tensC ((U_j\otimes V)\tensC A))\tensC (U_j'\otimes V)\nonumber\\
 &\xrightarrow{(\Id_A\tensC \cR_{A,U_j\otimes V}^{-1})\tensC\Id_{U_j'\otimes V}} (A\tensC (A\tensC (U_j\otimes V)))\tensC(U_j'\otimes V)\nonumber\\
 & \xrightarrow{assoc.} (A\tensC A)\tensC((U_j\otimes V)\tensC (U_j'\otimes V)) \xrightarrow{\mu_A\tensC (\Id_{U_j\tensU U_j'}\otimes l_V)} A\tensC((U_j\tensU U_j')\otimes V)\nonumber\\
 & \xrightarrow{\Id_A\tensC((\delta_{U_j}\tensU\Id_{U_j'})\otimes\Id_V)} A\tensC((U_j''\tensU U_j')\otimes V)\nonumber\\ &\xrightarrow{\Id_A\tensC(e_{U_j'}\otimes\Id_V)} A\tensC (U\otimes V) \xrightarrow{r_A} A\xrightarrow{\varepsilon_A} U\otimes V.
\end{align*}
Applying naturality of the associativity and braiding isomorphisms in $\cC$ to the inclusions of $U_{j'}\otimes V_{j'}$ and $U_j\otimes V_j$ into $A$, and applying naturality of the right unit isomorphisms to $\varepsilon_A$, we get
\begin{align}\label{eqn:Ej_calc}
 & (U \otimes V_{j'})  \tensC (U\otimes V_j) \xrightarrow{(i_{U_j}\otimes\Id_{V_{j'}})\tensC\Id_{U\otimes V_j}} ((U_j\tens_U U_j')\otimes V_{j'})\tensC (U\otimes V_j)\nonumber\\
 & \xrightarrow{(\cR_{U_j',U_j}^{-1}\otimes\Id_{V_{j'}})\tensC \Id_{U\otimes V_j}} ((U_j'\tensU U_j)\otimes V_{j'})\tensC (U\otimes V_j)\nonumber\\
 & \xrightarrow{( (p_j^{-1}\tensU\Id_{U_j})\otimes r_{V_{j'}}^{-1} )\tensC (i_{U_j}\otimes r_{V_j}^{-1})} ((U_{j'}\tensU U_j)\otimes(V_{j'}\tensV V))\tensC ((U_j\tensU U_j')\otimes(V_j\tensV V))\nonumber\\
 & = ((U_{j'}\otimes V_{j'})\tensC(U_j\otimes V))\tensC ((U_j\otimes V_j)\tensC (U_j'\otimes V))\nonumber\\
& \xrightarrow{assoc.} ((U_{j'}\otimes V_{j'})\tensC((U_j\otimes V)\tensC(U_j\otimes V_j)))\tensC(U_j'\otimes V) \nonumber\\
 & \xrightarrow{(\Id_{U_{j'}\otimes V_{j'}}\tensC \cR_{U_j\otimes V_j, U_j\otimes V}^{-1})\tensC \Id_{U_j'\otimes V}} ((U_{j'}\otimes V_{j'})\tensC((U_j\otimes V_j)\tensC(U_j\otimes V)))\tensC (U_j'\otimes V)\nonumber\\
 & \xrightarrow{assoc.} ((U_{j'}\otimes V_{j'})\tensC(U_j\otimes V_j))\tensC((U_j\otimes V)\tensC(U_j'\otimes V))\nonumber\\ & \xrightarrow{(\varepsilon_A\circ\mu_A\vert_{(U_{j'}\otimes V_{j'})\tensC(U_j\otimes V_j)})\tensC(\Id_{U_j\tensU U_j'}\otimes l_V)} (U\otimes V)\tensC ((U_j\tensU U_j')\otimes V)\nonumber\\
 & \xrightarrow{\Id_{U\otimes V}\tensC((\delta_{U_j}\tensU\Id_{U_j'})\otimes\Id_V)} (U\otimes V)\tensC((U_j''\tensU U_j')\otimes V)\nonumber\\
& \xrightarrow{\Id_{U\otimes V}\tensC(e_{U_j'}\otimes\Id_V)} (U\otimes V)\tensC (U\otimes V)\xrightarrow{r_{U\otimes V}=l_{U\otimes V}} U\otimes V.
\end{align}
Now by Lemma \ref{lem:prime_involution},
\begin{align*}
 \varepsilon_A\circ\mu_A\vert_{(U_{j'}\otimes V_{j'})\tensC(U_j\otimes V_j)} & = e_{U_j\otimes V_j}\circ((p_j\otimes q_j)\tensC\Id_{U_j\otimes V_j})\nonumber\\
 &= (e_{U_j}\circ(p_j\tensU\Id_{U_j}))\otimes(e_{V_j}\circ(q_j\tensU\Id_{V_j})).
\end{align*}
Substituting this into \eqref{eqn:Ej_calc}, we get a (vector space) tensor product of a morphism in $\cU$ with a morphism in $\cV$.

The morphism in $\cU$ is the composition
\begin{align*}
 U & \tensU U\xrightarrow{i_{U_j}\tensU\Id_U} (U_j\tensU U_j')\tensU U\xrightarrow{\cR^{-1}_{U_j',U_j}\tensU\Id_U} (U_j'\tensU U_j)\tensU U\nonumber\\
 & \xrightarrow{(p_j^{-1}\tensU\Id_{U_j})\tensU i_{U_j}} (U_{j'}\tensU U_j)\tensU(U_j\tensU U_j') \xrightarrow{assoc.} (U_{j'}\tensU(U_j\tensU U_j))\tensU U_j'\nonumber\\ & \xrightarrow{(\Id_{U_{j'}}\tens\cR_{U_j,U_j}^{-1})\tensU \Id_{U_j'}} (U_{j'}\tensU(U_j\tensU U_j))\tensU U_j'\xrightarrow{assoc.} (U_{j'}\tensU U_j)\tensU (U_j\tensU U_j')\nonumber\\
 & \xrightarrow{(p_j\tensU \Id_{U_j})\tens\Id_{U_j\tensU U_j'}} (U_j'\tensU U_j)\tensU (U_j\tensU U_j')\xrightarrow{e_{U_j}\tensU (\delta_{U_j}\tensU \Id_{U_j'})} U\tensU (U_j''\tensU U_j')\nonumber\\
 & \xrightarrow{\Id_U\tensU e_{U_j'}} U\tensU U\xrightarrow{l_U=r_U} U.
\end{align*}
We use naturality of associativity to cancel $p_j$ with its inverse and to move $\cR_{U_j',U_j}^{-1}$ in this composition. We also move the final arrow using naturality of the left unit:
\begin{align*}
 &U  \tensU  U\xrightarrow{i_{U_j}\tensU i_{U_j}} (U_j\tensU U_j')\tensU (U_j\tensU U_j') \xrightarrow{\cA_{U_j\tensU U_j', U_j,U_j'}} ((U_j\tensU U_j')\tensU U_j)\tensU U_j'\nonumber\\
 & \xrightarrow{(\cR_{U_j',U_j}^{-1}\tensU\Id_{U_j})\tensU\Id_{U_j'}} ((U_j'\tensU U_j)\tensU U_j)\tensU U_j' \xrightarrow{\cA_{U_j',U_j,U_j}^{-1}\tensU\Id_{U_j'}} (U_j'\tensU(U_j\tensU U_j))\tensU U_j'\nonumber\\
 & \xrightarrow{(\Id_{U_j'}\tens\cR_{U_j,U_j}^{-1})\tens\Id_{U_j'}} (U_j'\tensU (U_j\tensU U_j))\tensU U_j'\xrightarrow{\cA_{U_j',U_j,U_j}\tensU\Id_{U_j'}} ((U_j'\tensU U_j)\tensU U_j)\tensU U_j'\nonumber\\
 & \xrightarrow{\cA_{U_j'\tensU U_j,U_j,U_j'}^{-1}} (U_j'\tensU U_j)\tensU (U_j\tensU U_j')\xrightarrow{e_{U_j}\tens\Id_{U_j\tensU U_j'}} U\tensU (U_j\tensU U_j')\xrightarrow{l_{U_j\tensU U_j'}} U_j\tensU U_j'\nonumber\\
 & \xrightarrow{\delta_{U_j}\tens\Id_{U_j'}} U_j''\tensU U_j'\xrightarrow{e_{U_j'}} U.
\end{align*}
By the hexagon axiom, the third through sixth arrows above simplify to
\begin{equation*}
(\cR_{U_j'\tensU U_j, U_j}^{-1}\tensU\Id_{U_j'})\circ( \cA_{U_j,U_j',U_j}^{-1}\tensU\Id_{U_j'}).
\end{equation*}
Then we apply naturality of the associativity and braiding isomorphisms to $e_{U_j}$ to obtain
\begin{align*}
 U & \tensU  U\xrightarrow{i_{U_j}\tensU i_{U_j}} (U_j\tensU U_j')\tensU (U_j\tensU U_j') \xrightarrow{\cA_{U_j\tensU U_j', U_j,U_j'}} ((U_j\tensU U_j')\tensU U_j)\tensU U_j'\nonumber\\
 & \xrightarrow{\cA_{U_j,U_j',U_j}^{-1}\tens_U\Id_{U_j'}} (U_j\tensU (U_j'\tensU U_j))\tensU U_j'\xrightarrow{(\Id_{U_j}\tensU e_{U_j})\tensU \Id_{U_j'}} (U_j\tensU U)\tensU U_j'\nonumber\\ & \xrightarrow{\cR_{U,U_j}^{-1}\tensU\Id_{U_j'}} (U\tensU U_j)\tensU U_j' \xrightarrow{\cA_{U,U_j,U_j'}^{-1}} U\tensU(U_j\tensU U_j') \xrightarrow{l_{U_j\tensU U_j'}} U_j\tensU U_j'\nonumber\\
 &\xrightarrow{\delta_{U_j}\tensU \Id_{U_j'}} U_j''\tensU U_j' \xrightarrow{e_{U_j'}} U.
\end{align*}
Using properties of the unit, we rewrite the fifth through seventh arrows:
\begin{align*}
 l_{U_j\tensU U_j'}\circ\cA_{U,U_j,U_j'}^{-1}\circ(\cR_{U,U_j}^{-1}\tensU\Id_{U_j'}) & = (l_{U_j}\tens\Id_{U_j'})\circ(\cR_{U,U_j}^{-1}\tensU\Id_{U_j'})\nonumber\\
 &= r_{U_j}\tens\Id_{U_j'} =(\Id_{U_j}\tens l_{U_j'})\circ\cA_{U_j,U,U_j'}^{-1}.
\end{align*}
Returning this to the composition, applying naturality of the associativity isomorphisms to $e_{U_j}$, and then using the pentagon axiom, we get
\begin{align*}
 U & \tensU  U\xrightarrow{i_{U_j}\tensU i_{U_j}} (U_j\tensU U_j')\tensU (U_j\tensU U_j') \xrightarrow{\cA^{-1}_{U_j,U_j',U_j\tensU U_j'}} U_j\tensU(U_j'\tensU(U_j\tensU U_j'))\nonumber\\
 &\xrightarrow{\Id_{U_j}\tens\cA_{U_j',U_j,U_j'}} U_j\tensU((U_j'\tensU U_j)\tensU U_j') \xrightarrow{\Id_{U_j}\tens(e_{U_j}\tensU \Id_{U_j'})} U_j\tensU (U\tensU U_j')\nonumber\\
 &\xrightarrow{\Id_{U_j}\tensU l_{U_j'}} U_j\tensU U_j'\xrightarrow{\delta_{U_j}\tensU \Id_{U_j'}} U_j''\tensU U_j' \xrightarrow{e_{U_j'}} U.
\end{align*}
We now focus on the first two arrows of the composition:
\begin{align*}
 \cA_{U_j,U_j',U_j\tensU U_j'}^{-1}\circ(i_{U_j}\tensU i_{U_j}) & = (\Id_{U_j}\tensU(\Id_{U_j'}\tensU i_{U_j}))\circ\cA_{U_j,U_j',U}^{-1}\circ(i_{U_j}\tensU\Id_U)\circ r_U^{-1}\circ r_U\nonumber\\
& = (\Id_{U_j}\tensU(\Id_{U_j'}\tensU i_{U_j}))\circ(\Id_{U_j}\tensU r_{U_j'}^{-1})\circ i_{U_j}\circ l_U.
\end{align*}
Returning this to the composition, rigidity of $U_j$ implies everything simplifies to
\begin{align*}
 U\tensU U\xrightarrow{l_U} U\xrightarrow{i_{U_j}} U_j\tensU U_j'\xrightarrow{\delta_{U_j}\tensU \Id_{U_j'}} U_j''\tensU U_j'\xrightarrow{e_{U_{j'}}} U,
\end{align*}
which is $(\dim_\cU U_j)l_U$. Because $U_j$ is a simple object of the semisimple ribbon category $\cU$, $\dim_\cU U_j\neq 0$ (see \cite[Proposition 4.8.4]{EGNO}).

On the other hand, the $\cV$ side of \eqref{eqn:Ej_calc} is the composition
\begin{align*}
 V_{j'} & \tensV  V_j\xrightarrow{r_{V_{j'}}^{-1}\tensV r_{V_j}^{-1}} (V_{j'}\tensV V)\tensV (V_j\tensV V)\xrightarrow{assoc.} (V_{j'}\tensV (V\tensV V_j))\tensV V\nonumber\\
 &\xrightarrow{ (\Id_{V_{j'}}\tens\cR_{V_j,V}^{-1})\tensV\Id_V} (V_{j'}\tensV (V_j\tensV V))\tensV V \xrightarrow{assoc.} (V_{j'}\tensV V_j)\tensV (V\tensV V)\nonumber\\
 & \xrightarrow{(q_j\tensV\Id_{V_j})\tensV (l_V=r_V)} (V_j'\tensV V_j)\tensV V \xrightarrow{e_{V_j}\tensV\Id_V} V\tensV V\xrightarrow{l_V=r_V} V.
\end{align*}
We apply naturality of the unit and associativity isomorphisms to $q_j$ and use properties of the unit isomorphisms to rewrite as
\begin{align*}
 V_{j'} & \tensV  V_j\xrightarrow{q_j\tensV\Id_{V_j}} V_j'\tensV V_j \xrightarrow{r_{V_j'}^{-1}\tensV\Id_{V_j}} (V_j'\tensV V)\tensV V_j\nonumber\\
 &\xrightarrow{r_{(V_j'\tensV V)\tensV V_j}^{-1}} ((V_j'\tensV V)\tensV V_j)\tensV V \xrightarrow{\cA_{V_j',V,V_j}^{-1}\tensV\Id_V} (V_j'\tensV(V\tensV V_j))\tensV V\nonumber\\ &\xrightarrow{(\Id_{V_j'}\tensV\cR_{V_j,V}^{-1})\tensV\Id_V} (V_j'\tensV(V_j\tensV V))\tensV V \xrightarrow{\cA_{V_j',V_j,V}\tensV\Id_V} ((V_j'\tensV V_j)\tensV V)\tensV V\nonumber\\
 &\xrightarrow{r_{(V_j'\tensV V_j)\tensV V}} (V_j'\tensV V_j)\tensV V\xrightarrow{r_{V_j'\tensV V_j}} V_j'\tens V_j\xrightarrow{e_{V_j}} V.
\end{align*}
We use naturality of the right unit to move $r_{(V_j'\tensV V)\tensV V_j}$ and cancel it against its inverse. Then again by properties of the unit, we get
\begin{align*}
 V_{j'}\tensV V_j\xrightarrow{q_j\tensV\Id_{V_j}} V_j'\tensV V_j & \xrightarrow{\Id_{V_j'}\tens l_{V_j}^{-1}} V_j'\tens(V\tensV V_j)\nonumber\\
 &\xrightarrow{\Id_{V_j'}\tensV\cR_{V_j,V}^{-1}} V_j'\tensV (V_j\tensV V)\xrightarrow{\Id_{V_j'}\tensV r_{V_j}} V_j'\tensV V_j\xrightarrow{e_{V_j}} V.
\end{align*}
The middle three arrows here collapse to the identity, so we are left with $e_{V_j}\circ(q_j\tensV\Id_{V_j})$.

Our calculations have shown that the homomorphism $E_j$ is given by
\begin{equation*}
 E_j=(\dim_\cU U_j) \, e_{V_j}\circ(q_j\tensV\Id_{V_j}).
\end{equation*}
Since this is non-zero, the lemma is proved.
\end{proof}

\begin{theo}\label{thm:J_iso}
 For $j\in I$ and $M$ a module in $\cU$, $J_{U_j,M}$ is an isomorphism.
\end{theo}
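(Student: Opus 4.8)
The plan is to show separately that $J_{U_j,M}$ is injective and that it is surjective; the two arguments are formally dual, and each reduces, after a diagram chase in $\cV$, to $\Phi$ applied to one of the two rigidity axioms for $U_j$ in the rigid category $\cU$. Since all the functors involved are additive and $\cU$ is semisimple, it is enough to treat $M=U_i$ for a single $i\in\widetilde I$. It is also useful to record that, since $\cF$ is a tensor functor, the structure morphism $f_{U_j,M}\colon\cF(U_j)\tensA\cF(M)\to\cF(U_j\tensU M)$ is an isomorphism, so $J_{U_j,M}=\cG(f_{U_j,M})\circ g_{\cF(U_j),\cF(M)}$ is an isomorphism if and only if the invariants-functor structure map $g_{\cF(U_j),\cF(M)}$ is; thus the statement really only concerns the lax monoidal functor $\cG$ and the dualizable object $\cF(U_j)$ of $\cF(\cU)$, whose dual is $\cF(U_j')\cong\cF(U_{j'})$ with $j'\in I$.

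For injectivity I would run the categorical argument of \cite[Theorem 4.7]{McR}. Its one missing ingredient in the present non-rigid setting is a \emph{partial coevaluation} $s\colon\Phi(U)\to\Phi(U_j)\tensV\Phi(U_j')$ with $J_{U_j,U_j'}\circ s=\Phi(i_{U_j})$. To produce $s$, observe that $U_j$ simple and $\cU$ rigid force $U_j\tensU U_j'$ to be a finite direct sum of modules $U_{i'}$ with $i\in I$, so by Corollary \ref{cor:Phi_of_Ui} the $V$-module $\Phi(U_j\tensU U_j')$ is semisimple, with $\im\Phi(i_{U_j})\cong\Phi(U)\cong V$ a direct summand; Lemma \ref{lem:J_surj_enough} says precisely that this summand lies in $\im J_{U_j,U_j'}$, and splitting it off produces $s$. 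One then assembles a left inverse $\lambda$ of $J_{U_j,M}$ from $\Phi$ of a left unit isomorphism, the map $s$, an associativity isomorphism in $\cV$, the structure map $J_{U_j',\,U_j\tensU M}$, and $\Phi$ of $e_{U_j}\tensU\Id_M$ and of a left unit isomorphism, and checks $\lambda\circ J_{U_j,M}=\Id$ by a chase that, via the naturality and associativity coherences of $\Phi$ and the identity $J_{U_j,U_j'}\circ s=\Phi(i_{U_j})$, collapses to $\Phi$ of a rigidity axiom for $U_j$ tensored with $\Phi(\Id_M)$. Since $j'\in I$ too, the same argument also shows every $J_{U_{j'},N}$, $N$ in $\cU$, is injective.

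Surjectivity I would obtain by the dual argument, now using the injectivity just proved. The part played by $s$ is taken over by a \emph{partial evaluation} $\widehat e=\Phi(e_{U_j})\circ J_{U_{j'},U_j}\colon\Phi(U_{j'})\tensV\Phi(U_j)\to\Phi(U)\cong V$, which Lemma \ref{lem:J_surj_enough} applied to $j'$ shows is nonzero, hence surjective onto the simple module $V$. Out of $\Phi(i_{U_j})$, $\widehat e$, and associativity isomorphisms one builds a candidate right inverse $\mu$ of $J_{U_j,M}$; verifying $J_{U_j,M}\circ\mu=\Id$ comes down to an identity of morphisms from which several structure maps $J$ must be cancelled, and these are all of the form $J_{U_{j'},N}$ or $J_{U_j,N}$, hence injective by the previous step, so the cancellation is legitimate; the chase then again reduces to $\Phi$ of a rigidity axiom for $U_j$, the nonzero categorical dimension $\dim_\cU U_j$ reappearing exactly as in the proof of Lemma \ref{lem:J_surj_enough}. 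Together these give that $J_{U_j,M}$ is an isomorphism. I expect the main obstacle to lie in the coherence bookkeeping of the two diagram chases, and particularly in the surjectivity step, where the cancellations must be organized so that only the already-injective structure maps are cancelled; a further point needing care is checking that the summand $\im\Phi(i_{U_j})$ of $\Phi(U_j\tensU U_j')$ indeed splits off \emph{through} $J_{U_j,U_j'}$ rather than merely abstractly, which is what makes $s$ exist.
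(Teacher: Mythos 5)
Your overall plan (injectivity first, then surjectivity, each reducing after a coherence chase to a rigidity axiom for $U_j$ in $\cU$, with Lemma \ref{lem:J_surj_enough} supplying the key nontrivial input) is the right shape, and your observation that everything reduces to the lax structure map $g$ of $\cG$ is also correct. However, your injectivity step contains a genuine gap, in exactly the spot you flag as ``needing care.'' To build the left inverse $\lambda$ you need a $V$-module map $s\colon V\to\Phi(U_j)\tensV\Phi(U_j')$ with $J_{U_j,U_j'}\circ s=\Phi(i_{U_j})$, i.e.\ a $V$-linear section of $J_{U_j,U_j'}$ over the summand $\im\Phi(i_{U_j})\cong V$. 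Lemma \ref{lem:J_surj_enough} gives only the containment $\im\Phi(i_{U_j})\subseteq\im J_{U_j,U_j'}$, which yields pointwise preimages but not a module section; to split the resulting surjection $J_{U_j,U_j'}^{-1}(\im\Phi(i_{U_j}))\twoheadrightarrow V$ you would need either $V$ projective in $\cV$ (not assumed) or $\Phi(U_j)\tensV\Phi(U_j')$ semisimple (which at this stage you do \emph{not} know --- it is precisely the injectivity of $J_{U_j,U_j'}$, together with semisimplicity of $\Phi(U_j\tensU U_j')$, that later realizes $\Phi(U_j)\tensV\Phi(U_j')$ as a submodule of a semisimple module and hence semisimple). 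So $s$ cannot be produced before injectivity is known, and your argument is circular. There is a secondary problem even granting $s$: when you compose $\lambda\circ J_{U_j,M}$ and push $J_{U_j,M}$ rightward through $\lambda$ via naturality and the associativity compatibility of $J$ (taking $M_1=U_j'$, $M_2=U_j$, $M_3=M$), the structure map that appears is $J_{U_j',U_j}$, not $J_{U_j,U_j'}$; the relation $J_{U_j,U_j'}\circ s=\Phi(i_{U_j})$ never comes into play, so the chase does not reduce to $\Phi$ of the rigidity composition.

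The paper's injectivity argument sidesteps both problems by not constructing any section or left inverse. Following \cite[Theorem 4.7]{McR}, one takes the kernel $k\colon K\to\Phi(U_j)\tensV\Phi(M)$ of $J_{U_j,M}$, builds a $V$-homomorphism $F\colon\Phi(U_j\tensU U_j')\tensV K\to\Phi(U_j)\tensV\Phi(M)$ out of $k$, associativity, $J_{U_j\tensU U_j',U_j}$, and $\Phi$ of evaluation and unit morphisms, and verifies two coherence facts: $F$ precomposed with $\Phi(i_{U_j})\tensV\Id_K$ (and the obvious unit arrows) equals $k$, and $F\circ(J_{U_j,U_j'}\tensV\Id_K)=0$. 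The second fact kills $F$ on $\im(J_{U_j,U_j'}\tensV\Id_K)$, and Lemma \ref{lem:J_surj_enough} combined with right exactness of $\tensV$ implies $\im(\Phi(i_{U_j})\tensV\Id_K)\subseteq\im(J_{U_j,U_j'}\tensV\Id_K)$, so $F$ vanishes there too; the first fact then forces $k=0$. Only the containment is needed --- no splitting. Once injectivity of $J_{U_j,U_j'}$ is in hand, $\Phi(U_j)\tensV\Phi(U_j')$ embeds in the semisimple $\Phi(U_j\tensU U_j')$, so a retraction $\widetilde{J}_{U_j,U_j'}$ exists; the surjectivity chase you sketch is then essentially the paper's, but it should be organized around $\widetilde{J}_{U_j,U_j'}$ rather than your ``partial evaluation'' $\widehat{e}$ alone, since it is the cancellation of the adjacent block $J_{U_j,U_j'}\circ\widetilde{J}_{U_j,U_j'}$ on $\im\Phi(i_{U_j})$ (again via Lemma \ref{lem:J_surj_enough}) that makes the right-inverse composition collapse to $\Phi$ of the rigidity axiom.
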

\begin{proof}
 We need to show that $J_{U_j,M}$ is both injective and surjective. The proof of injectivity is the same as the proof of \cite[Theorem 4.7]{McR} except for one difference. As in \cite{McR}, we take $k: K\rightarrow\Phi(U_j)\tensV\Phi(M)$ to be the kernel of $J_{U_j,M}$ and consider the $V$-module homomorphism $F$ which is the composition
 \begin{align*}
  \Phi  (U_j & \tens_U U_j') \tensV K \xrightarrow{\Id_{\Phi(U_j\tensU U_j')}\tensV k} \Phi(U_j\tensU U_j')\tensV(\Phi(U_j)\tensV\Phi(M))\nonumber\\
  & \xrightarrow{\cA_{\Phi(U_j\tensU U_j'),\Phi(U_j),\Phi(M)}} (\Phi(U_j\tensU U_j')\tensV\Phi(U_j))\tensV\Phi(M)\nonumber\\ &\xrightarrow{J_{U_j\tensU U_j', U_j}\tensV\Id_{\Phi(M)}} \Phi((U_j\tensU U_j')\tensU U_j)\tensV\Phi(M)\nonumber\\ &\xrightarrow{\Phi(\cA_{U_j,U_j',U_j}^{-1})\tensV\Id_{\Phi(M)}} \Phi(U_j\tensU (U_j'\tensU U_j))\tensV\Phi(M)\nonumber\\ &\xrightarrow{\Phi(\Id_{U_j}\tensU e_{U_j})\tensV\Id_{\Phi(M)}} \Phi(U_j\tensU U)\tensV\Phi(M) \xrightarrow{\Phi(r_{U_j})\tensV\Id_{\Phi(M)}} \Phi(U_j)\tensV\Phi(M).
 \end{align*}
Because $(\Phi, J, j)$ is a lax tensor functor and $\cU$ is rigid, it follows exactly as in \cite{McR} that:
\begin{itemize}
 \item The composition
 \begin{equation*}
  K\xrightarrow{l_K^{-1}} V\tensV K\xrightarrow{j^{-1}\tensV\Id_K} \Phi(U)\tensV K\xrightarrow{\Phi(i_{U_j})\tensV\Id_K} \Phi(U_j\tensU U_j')\tensV K\xrightarrow{F} \Phi(U_j)\tensV\Phi(M)
 \end{equation*}
is equal to $k$.

\item The composition
\begin{equation*}
 (\Phi(U_j)\tensV\Phi(U_j'))\tensV K\xrightarrow{J_{U_j,U_j'}\tensV\Id_K} \Phi(U_j\tensU U_j')\tensV K\xrightarrow{F} \Phi(U_j)\tensV\Phi(M)
\end{equation*}
is zero.
\end{itemize}
Now while the proof in \cite{McR} used surjectivity of $J$ to conclude that $k=0$, here we use Lemma \ref{lem:J_surj_enough}. That is, the second point above implies $F\vert_{\im (J_{U_j,U_j'}\tensV \Id_K)} =0$, so that by Lemma \ref{lem:J_surj_enough}, $F\vert_{\im (\Phi(i_{U_j})\tensV \Id_K)}=0$. Then the first point implies $k=0$, so $J_{U_j,M}$ is injective.

To prove that $J_{U_j,M}$ is surjective, we will use the injectivity we have just proven. Since we are assuming the $V_i$ are semisimple and since $\Phi(U_j\tensU U_j')$ is isomorphic to a direct sum of the $V_i$,
the injection $J_{U_j,U_j'}: \Phi(U_j)\tensV\Phi(U_j')\rightarrow\Phi(U_j\tensU U_j')$ realizes $\Phi(U_j)\tensV\Phi(U_j')$ as a submodule of a semisimple $V$-module. Hence there is a $V$-module homomorphism
\begin{equation*}
 \til{J}_{U_j,U_j'}: \Phi(U_j\tensU U_j')\rightarrow\Phi(U_j)\tensV\Phi(U_j')
\end{equation*}
such that 
$$\til{J}_{U_j,U_j'}\circ J_{U_j,U_j'} =\Id_{\Phi(U_j)\tensV\Phi(U_j')}$$
and $J_{U_j,U_j'}\circ\til{J}_{U_j,U_j'}$ is projection onto $\mathrm{Im}\,J_{U_j,U_j'}$ with respect to some $V$-module complement.

\allowdisplaybreaks

Now we will prove $J_{U_j,M}$ is surjective, by showing that the composition
\begin{align*}
 \Phi(  U_j & \tensU  M)  \xrightarrow{l_{\Phi(U_j\tensU M)}^{-1}} V\tensV\Phi(U_j\tensU M)\xrightarrow{j^{-1}\tens\Id_{\Phi(U_j\tensU M)}} \Phi(U)\tensV\Phi(U_j\tensU M)\nonumber\\
 & \xrightarrow{\Phi(i_{U_j})\tensV\Id_{\Phi(U_j\tensU M)}} \Phi(U_j\tensU U_j')\tensV\Phi(U_j\tensU M)\nonumber\\
 &\xrightarrow{\widetilde{J}_{U_j,U_j'}\tensV\Id_{\Phi(U_j\tensU M)}} (\Phi(U_j)\tensV\Phi(U_j'))\tensV\Phi(U_j\tensU M)\nonumber\\
 & \xrightarrow{\cA_{\Phi(U_j),\Phi(U_j'),\Phi(U_j\tensU M)}^{-1}} \Phi(U_j)\tensV(\Phi(U_j')\tensV\Phi(U_j\tensU M))\nonumber\\
 &\xrightarrow{\Id_{\Phi(U_j)}\tensV J_{U_j',U_j\tensU M}} \Phi(U_j)\tensV\Phi(U_j'\tensU(U_j\tensU M))\nonumber\\
 & \xrightarrow{\Id_{\Phi(U_j)}\tensV\Phi(\cA_{U_j',U_j,M})} \Phi(U_j)\tensU\Phi((U_j'\tensV U_j)\tensU M)\nonumber\\ &\xrightarrow{\Id_{\Phi(U_j)}\tensV\Phi(e_{U_j}\tensU \Id_M)} \Phi(U_j)\tensV\Phi(U\tensU M)\nonumber\\ &\xrightarrow{\Id_{\Phi(U_j)}\tensV\Phi(l_M)} \Phi(U_j)\tensV\Phi(M)\xrightarrow{J_{U_j,M}} \Phi(U_j\tensU M)
\end{align*}
is the identity on $\Phi(U_j\tens_U M)$. We shall accomplish this by reducing to the rigidity composition for $U_j$. First, we use the naturality of $J_{U_j,M}$ to obtain
\begin{align*}
  \Phi(  U_j & \tensU  M)\xrightarrow{l_{\Phi(U_j\tensU M)}^{-1}} V\tensV\Phi(U_j\tensU M)\xrightarrow{j^{-1}\tens\Id_{\Phi(U_j\tensU M)}} \Phi(U)\tensV\Phi(U_j\tensU M)\nonumber\\
 & \xrightarrow{\Phi(i_{U_j})\tensV\Id_{\Phi(U_j\tensU M)}} \Phi(U_j\tensU U_j')\tensV\Phi(U_j\tensU M)\nonumber\\
 &\xrightarrow{\widetilde{J}_{U_j,U_j'}\tensV\Id_{\Phi(U_j\tensU M)}} (\Phi(U_j)\tensV\Phi(U_j'))\tensV\Phi(U_j\tensU M)\nonumber\\
 & \xrightarrow{\cA_{\Phi(U_j),\Phi(U_j'),\Phi(U_j\tensU M)}^{-1}} \Phi(U_j)\tensV(\Phi(U_j')\tensV\Phi(U_j\tensU M))\nonumber\\
 &\xrightarrow{\Id_{\Phi(U_j)}\tensV J_{U_j',U_j\tensU M}} \Phi(U_j)\tensV\Phi(U_j'\tensU(U_j\tensU M))\nonumber\\
 & \xrightarrow{J_{U_j, U_j'\tensU(U_j\tensU M)}} \Phi(U_j\tensU(U_j'\tensU(U_j\tensU M)))\nonumber\\
 &\xrightarrow{\Phi(\Id_{U_j}\tensU\cA_{U_j',U_j,M})} \Phi(U_j\tensU((U_j'\tensU U_j)\tensU M))\nonumber\\
 & \xrightarrow{\Phi(\Id_{U_j}\tensU(e_{U_j}\tensU\Id_M))} \Phi(U_j\tensU(U\tensU M))\xrightarrow{\Phi(\Id_{U_j}\tensU l_M)} \Phi(U_j\tensU M).
\end{align*}
Next we apply compatibility of $J$ with associativity to the fifth through seventh arrows:
\begin{align*}
  \Phi(  U_j & \tensU  M)\xrightarrow{l_{\Phi(U_j\tensU M)}^{-1}} V\tensV\Phi(U_j\tensU M)\xrightarrow{j^{-1}\tens\Id_{\Phi(U_j\tensU M)}} \Phi(U)\tensV\Phi(U_j\tensU M)\nonumber\\
 & \xrightarrow{\Phi(i_{U_j})\tensV\Id_{\Phi(U_j\tensU M)}} \Phi(U_j\tensU U_j')\tensV\Phi(U_j\tensU M)\nonumber\\
 &\xrightarrow{\widetilde{J}_{U_j,U_j'}\tensV\Id_{\Phi(U_j\tensU M)}} (\Phi(U_j)\tensV\Phi(U_j'))\tensV\Phi(U_j\tensV M)\nonumber\\
 & \xrightarrow{J_{U_j,U_j'}\tensV\Id_{\Phi(U_j\tensU M)}} \Phi(U_j\tensU U_j')\tensV\Phi(U_j\tensU M)\nonumber\\
 &\xrightarrow{J_{U_j\tensU U_j',U_j\tensU M}} \Phi((U_j\tensU U_j')\tensU (U_j\tensU M))\nonumber\\
 & \xrightarrow{\Phi(\cA_{U_j,U_j',U_j\tensU M}^{-1})} \Phi(U_j\tensU(U_j'\tensU(U_j\tensU M)))\nonumber\\
 &\xrightarrow{\Phi(\Id_{U_j}\tensU\cA_{U_j',U_j,M})} \Phi(U_j\tensU((U_j'\tensU U_j)\tensU M))\nonumber\\
 & \xrightarrow{\Phi(\Id_{U_j}\tensU(e_{U_j}\tensU\Id_M))} \Phi(U_j\tensU(U\tensU M))\xrightarrow{\Phi(\Id_{U_j}\tensU l_M)} \Phi(U_j\tensU M).
\end{align*}
Now since $J_{U_j,U_j'}\circ \til{J}_{U_j,U_j'}$ is a projection onto the image of $J_{U_j,U_j'}$, and since $\im\Phi(i_{U_j})\subseteq\im J_{U_j,U_j'}$ by Lemma \ref{lem:J_surj_enough}, we can remove the fourth and fifth arrows in the composition. Then we apply the naturality of $J$ to $J_{U_j\tensU U_j',U_j\tensU M}$:
\begin{align*}
  \Phi(  U_j & \tensU  M)\xrightarrow{l_{\Phi(U_j\tensU M)}^{-1}} V\tensV\Phi(U_j\tensU M)\xrightarrow{j^{-1}\tens\Id_{\Phi(U_j\tensU M)}} \Phi(U)\tensV\Phi(U_j\tensU M)\nonumber\\ 
  & \xrightarrow{J_{U,U_j\tensU M}} \Phi(U\tensU(U_j\tensU M)) \xrightarrow{\Phi(i_{U_j}\tensU\Id_{U_j\tensU M})} \Phi((U_j\tensU U_j')\tensU(U_j\tensU M))    \nonumber\\
  & \xrightarrow{\Phi(\cA_{U_j,U_j',U_j\tensU M}^{-1})} \Phi(U_j\tensU(U_j'\tensU(U_j\tensU M)))\nonumber\\
  &\xrightarrow{\Phi(\Id_{U_j}\tensU\cA_{U_j',U_j,M})} \Phi(U_j\tensU((U_j'\tensU U_j)\tensU M))\nonumber\\
 & \xrightarrow{\Phi(\Id_{U_j}\tensU(e_{U_j}\tensU\Id_M))} \Phi(U_j\tensU(U\tensU M))\xrightarrow{\Phi(\Id_{U_j}\tensU l_M)} \Phi(U_j\tensU M).
\end{align*}
By compatibility of $J$ and $j$ with the unit isomorphisms, the first three arrows are $\Phi(l_{U_j\tensU M}^{-1})$. We also apply the triangle axiom to the last arrow and then naturality of associativity to $e_{U_j}$:
\begin{align*}
 \Phi( U_j & \tensU M)\xrightarrow{\Phi(l_{U_j\tensU M}^{-1})}\Phi(U\tensU(U_j\tensU M))\nonumber\\ &\xrightarrow{\Phi(i_{U_j}\tensU\Id_{U_j\tensU M})} \Phi((U_j\tensU U_j')\tensU(U_j\tensU M))    \nonumber\\
  & \xrightarrow{\Phi(\cA_{U_j,U_j',U_j\tensU M}^{-1})} \Phi(U_j\tensU(U_j'\tensU(U_j\tensU M)))\nonumber\\
  &\xrightarrow{\Phi(\Id_{U_j}\tensU\cA_{U_j',U_j,M})} \Phi(U_j\tensU((U_j'\tensU U_j)\tensU M))\nonumber\\
  &\xrightarrow{\Phi(\cA_{U_j,U_j'\tensU U_j, M})} \Phi((U_j\tensU(U_j'\tensU U_j))\tensU M)\nonumber\\
  &\xrightarrow{\Phi((\Id_{U_j}\tensU e_{U_j})\tensU\Id_M)} \Phi((U_j\tensU U)\tensU M)\xrightarrow{\Phi(r_{U_j}\tensU \Id_M)} \Phi(U_j\tensU M).
\end{align*}

At this point, the composition is simply $\Phi$ applied to a morphism in $\cU$, and we need to show that this morphism in $\cU$ is $\Id_{U_j\tensU M}$. In fact, the pentagon axiom applied to the third through fifth arrows yields
\begin{equation*}
 (\cA_{U_j,U_j',U_j}^{-1}\tensU\Id_M)\circ\cA_{U_j\tensU U_j', U_j,M}.
\end{equation*}
Returning this to the composition and using the naturality of associativity, we get
\begin{align*}
 U_j & \tensU M\xrightarrow{l_{U_j\tensU M}^{-1}} U\tensU(U_j\tensU M)\xrightarrow{\cA_{U,U_j,M}} (U\tensU U_j)\tensU M\nonumber\\
 &\xrightarrow{(i_{U_j}\tensU\Id_{U_j})\tensU\Id_M} ((U_j\tensU U_j')\tensU U_j)\tensU M \xrightarrow{\cA_{U_j,U_j',U_j}^{-1}\tensU\Id_M} (U_j\tensU(U_j'\tensU U_j))\tensU M\nonumber\\ &\xrightarrow{(\Id_{U_j}\tensU e_{U_j})\tensU\Id_M} (U_j\tensU U)\tensU M \xrightarrow{r_{U_j}\tensU\Id_M} U_j\tensU M.
\end{align*}
The first two arrows are $l_{U_j}^{-1}\tensU\Id_M$ by properties of the unit, and then the whole composition collapses to $\Id_{U_j\tensU M}$ by the rigidity of $U_j$. This completes the proof of the theorem.
\end{proof}

The preceding theorem brings us close to showing that the lax monoidal functor $\Phi: \cU\rightarrow\cV$ restricts to a (strong) monoidal functor on the subcategory $\cU_A$, which we will show is a tensor subcategory of $\cU$. So far, we have not analyzed compatibility of $(\Phi,J,j)$ with the braidings on $\cU$ and $\cV$, because $\Phi$ factors through the non-braided tensor category $\repA$. It is possible to show directly that $\Phi$ is a braid-reversed tensor functor on $\cU_A$, in the sense that 
\begin{equation*}
  J_{M_2,M_1}\circ\cR_{\Phi(M_2),\Phi(M_1)} =\Phi(\cR_{M_2,M_1}^{-1})\circ J_{M_1,M_2}
 \end{equation*}
for modules $M_1$, $M_2$ in $\cU_A$. However, we will not need to do this, because once we use Theorem \ref{thm:J_iso} to transfer rigidity from $\cU_A$ to $\cV_A$, results from \cite{CKM2} (proved under the assumption that modules in $\cV_A$ are rigid) show there is a braid-reversed tensor functor.

\begin{theo}\label{thm:mirror_equiv}
Under Assumption \ref{assum:finite_I}, together with Assumption \ref{assum:inf_order} if $I$ is infinite, the category $\cU_A$ of $U$-modules (respectively, the category $\cV_A$ of $V$-modules), is a semisimple braided ribbon category with distinct simple objects $U_i$ (respectively, $V_i$), $i\in I$. Moreover, there is a braid-reversed tensor equivalence $\tau: \cU_A\rightarrow\cV_A$ such that $\tau(U_i)\cong V_i'$ for all $i\in I$.
\end{theo}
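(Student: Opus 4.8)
The plan is to obtain everything as a consequence of Theorem~\ref{thm:J_iso}, together with the computation made in the proof of the lemma preceding Corollary~\ref{cor:Phi_of_Ui}: namely $\Phi(U_i)=\cG(\cF(U_i))\cong V_{i'}$ for $i\in I$, while $\Phi(U_i)=0$ for $i\in\til I\setminus I$. Here $i\mapsto i'$ is the involution of Lemma~\ref{lem:prime_involution}.

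First I would show that $\cU_A$ is closed under $\tensU$, so that it is a semisimple braided ribbon subcategory of $\cU$ with distinct simple objects $\{U_i\}_{i\in I}$. For $a,b\in I$ and $c\in\til I\setminus I$, apply Theorem~\ref{thm:J_iso} with $j=a'\in I$ and $M=U_c$: since $\Phi(U_c)=0$, the isomorphism $J_{U_{a'},U_c}$ forces $\Phi(U_{a'}\tensU U_c)=0$, and since $\Phi(U_{a'}\tensU U_c)\cong\bigoplus_{l\in I}N^{\,l}_{a'c}\,V_{l'}$ with each $V_{l'}\neq 0$, we get $N^{\,l}_{a'c}=0$ for all $l\in I$. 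The fusion-rule symmetry $N^{\,c}_{ab}=N^{\,b}_{a'c}$ in the rigid category $\cU$ then gives $N^{\,c}_{ab}=0$. Hence $U_a\tensU U_b$ is a finite direct sum of $U_i$, $i\in I$; together with $U_i'\cong U_{i'}$ ($i'\in I$) this makes $\cU_A$ a tensor subcategory that inherits the braiding and ribbon structure of $\cU$.

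Next, by additivity $J_{M_1,M_2}$ is an isomorphism for all $M_1,M_2\in\cU_A$, so $(\Phi,J,\varphi)$ restricts to a strong monoidal functor $\cU_A\to\cV$ whose essential image lies in $\cV_A$ (because $\Phi(\bigoplus_i m_iU_i)\cong\bigoplus_{i\in I}m_iV_{i'}$); moreover $\cV_A$ is itself closed under $\tensV$, since $V_a\tensV V_b\cong\Phi(U_{a'})\tensV\Phi(U_{b'})\cong\Phi(U_{a'}\tensU U_{b'})$ is a finite sum of $V_{l'}$, $l\in I$, by the previous step. I would then show that $\Phi\colon\cU_A\to\cV_A$ is fully faithful: the induced map on Hom-spaces is injective because on a simple $U_i$ it is $c\mapsto c\,\Id_{V_{i'}}$, and it is surjective by a dimension count identifying $\dim\hom_V(\Phi M_1,\Phi M_2)$ with $\dim\hom_U(M_1,M_2)$ — here one uses the contragredient adjunction of Definition~\ref{def:contragredient} (valid by Proposition~\ref{prop:contra_property}), the isomorphisms $(\Phi M)'\cong\Phi(M')$ and $J_{M_1,M_2'}$, and the hypothesis $\dim\hom_V(V,V_i)=\delta_{i,0}$. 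Full faithfulness forces the $V_i\cong\Phi(U_{i'})$ to be simple (as $\hom_V(V_i,V_i)\cong\hom_U(U_{i'},U_{i'})=\CC$) and pairwise non-isomorphic, so $\cV_A$ is a semisimple abelian category with distinct simple objects $\{V_i\}_{i\in I}$.

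Finally, since a strong monoidal functor takes duals to duals, $V_i\cong\Phi(U_{i'})$ has a dual $\Phi(U_i)\cong V_i'$ in $\cV$, with (co)evaluation morphisms obtained from $e_{U_{i'}}$ and $i_{U_{i'}}$ by conjugating with $\varphi$ and the isomorphisms $J$; hence $\cV_A$ is rigid, and being rigid, braided, and equipped with the twist $\theta=e^{2\pi i L(0)}$, it is a semisimple braided ribbon subcategory of $\cV$. Once $\cV_A$ is known to be rigid, the hypotheses of the mirror equivalence theorem of \cite{CKM2} — proved there under the \emph{a priori} assumption that $\cV_A$ is rigid — are satisfied, and quoting that result (in the ind-category framework of \cite{CMY} when $I$ is infinite) yields a braid-reversed tensor equivalence $\tau\colon\cU_A\to\cV_A$ with $\tau(U_i)\cong V_i'$. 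The substantive input (Theorem~\ref{thm:J_iso} and Lemma~\ref{lem:J_surj_enough}) being already in hand, the only genuinely delicate remaining point is the Hom-space dimension count establishing full faithfulness of $\Phi$, where one must carefully keep track of the contragredient functor on the non-rigid category $\cV$ and of the involution $i\mapsto i'$; the braiding compatibility of $\tau$ is, deliberately, outsourced to \cite{CKM2} rather than checked directly.
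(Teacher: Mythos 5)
Your proposal is correct and reaches the same endpoint as the paper, but organizes the argument differently in two places. First, you prove closure of $\cU_A$ under $\tensU$ directly, by combining Theorem~\ref{thm:J_iso} (applied to $J_{U_{a'},U_c}$ with $c\in\til I\setminus I$, where $\Phi(U_c)=0$) with the fusion-rule symmetry $N^c_{ab}=N^b_{a'c}$ in the rigid semisimple category $\cU$; the paper instead obtains this closure as an output of \cite[Theorem~5.10(2)]{CKM2} after establishing that $\cV_A$ is a semisimple ribbon category. Your direct argument is sound and arguably more self-contained, and it does not affect the rest of the proof since $\cV_A$-closure already follows from $J_{U_{a'},U_{b'}}$ being an isomorphism and Corollary~\ref{cor:Phi_of_Ui} without needing $\cU_A$-closure first. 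Second, you establish simplicity and distinctness of the $V_i$ by proving full faithfulness of $\Phi\colon\cU_A\to\cV_A$ via the chain $\hom_V(\Phi M_1,\Phi M_2)\cong\hom_V(\Phi M_1\tensV\Phi(M_2'),V)\cong\hom_V(\Phi(M_1\tensU M_2'),V)$, relying on the compatibility $\Phi(M')\cong(\Phi M)'$ (which does hold on $\cU_A$ by Corollary~\ref{cor:Phi_of_Ui} and Lemma~\ref{lem:prime_involution}) and the assumption $\dim\hom_V(V_i,V)=\delta_{i,0}$. The paper instead first proves rigidity of each $V_i$ and then computes $\dim\hom_V(V_i,V_j)=\dim\hom_V(V,V_j\tensV V_i')=N^0_{j'i}=\delta_{ij}$ directly; the two counts are logically equivalent, and your version trades the use of rigidity in $\cV_A$ for the contragredient adjunction in $\cV$ from Proposition~\ref{prop:contra_property}. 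For rigidity itself, you invoke the general principle that a strong monoidal functor takes duals to duals, which is exactly what the paper's explicit (co)evaluation formulas verify. Both arguments then quote \cite{CKM2} for the final braid-reversed equivalence. One minor point: the dimension count uses $\dim\hom_V(V_i,V)=\delta_{i,0}$ rather than the variant $\dim\hom_V(V,V_i)=\delta_{i,0}$ that you cited, but both are part of Assumption~\ref{assum:finite_I}(3), so this is only a notational slip.
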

\begin{proof}
 We first prove that $\cV_A$ is closed under the tensor product on $\cV$ and that it is rigid. Since every object of $\cV_A$ is isomorphic to a direct sum of $V_i$ for $i\in I$, and since $V_i\cong\Phi(U_{i'})$, it is enough to show that $\Phi(U_i)\tensV\Phi(U_j)$ is an object of $\cV_A$ for $i,j\in I$, and that each $\Phi(U_i)$ is rigid. By Theorem \ref{thm:J_iso} and Corollary \ref{cor:Phi_of_Ui},
 \begin{equation*}
  \Phi(U_i)\tensV\Phi(U_j)\cong\Phi(U_i\tensU U_j)\cong\bigoplus_{k\in\til{I}} N^k_{ij} \Phi(U_k)\cong\bigoplus_{k\in I} N^k_{ij} V_{k'},
 \end{equation*}
which is an object of $\cV_A$. Moreover, because $J_{U_i,U_{i}'}$ is an isomorphism for $i\in I$, $\Phi(U_{i}')$ is a dual of $\Phi(U_i)$, with evaluation
\begin{align*}
 \Phi(U_{i}')\tensV\Phi(U_i)\xrightarrow{J_{U_{i}',U_i}} \Phi(U_{i}'\tensU U_i)\xrightarrow{\Phi(e_{U_i})} \Phi(U)\xrightarrow{j} V
\end{align*}
and coevaluation
\begin{equation*}
 V\xrightarrow{j^{-1}} \Phi(U)\xrightarrow{\Phi(i_{U_i})} \Phi(U_i\tensU U_i')\xrightarrow{J_{U_i,U_{i}'}^{-1}} \Phi(U_i)\tensV\Phi(U_{i}').
\end{equation*}
It is easy to see that this evaluation and coevaluation satisfy the rigidity axioms from the rigidity of $U_i$, the naturality of $J$, and the compatibility of $J$ and $j$ with the unit and associativity isomorphisms of $\cU$ and $\cV$.

Now we show that the $V_i$ for $i\in I$ are distinct simple $V$-modules. Since we have assumed they are semisimple,  we just need to show $\dim\hom_V (V_i, V_j) = \delta_{i,j}$. In fact,
\begin{align*}
 \dim\hom_V (V_i, V_j) & =\dim\hom_V (V\tensV V_i, V_j)=\dim\hom_V (V, V_j\tensV V_i')\nonumber\\
 &=\sum_{k\in I} N^k_{j' i} \dim\hom_V (V, V_{k'})= N^0_{j' i} = \delta_{i,j}.
\end{align*}
The second equality here uses the rigidity of $V_i$, and the third equality uses
\begin{equation*}
 V_j\tensV V_i'\cong\Phi(U_{j'})\tensV\Phi(U_i)\cong\Phi(U_{j'}\tensU U_i)\cong\bigoplus_{k\in\til{I}} N^k_{j' i} \Phi(U_k)\cong\bigoplus_{k\in I} N^k_{j' i} V_{k'}.
\end{equation*}
Then the fourth equality uses the assumption $\dim\hom_V(V, V_i)=\delta_{i,0}$ for $i\in I$.

We have now shown that $\cV_A$ is a semisimple braided ribbon category with distinct simple objects $V_i$, $i\in I$. This means that $\cU$ and $\cV_A$ satisfy the conditions needed to apply \cite[Theorem 5.10(2)]{CKM2}, and we conclude that $\cU_A$ is closed under the tensor product on $\cU$ and that there is a braid-reversed tensor equivalence $\tau: \cU_A\rightarrow\cV_A$ such that $\tau(U_i)\cong V_i'$ for $i\in I$.
\end{proof}

\begin{rema}
 Recall that the twists on the braided ribbon categories $\cU_A$ and $\cV_A$ are given by $e^{2\pi iL_U(0)}$ and $e^{2\pi i L_V(0)}$, respectively. Since the vertex operator algebra $A$ is $\ZZ$-graded by $L(0)=L_U(0)+L_V(0)$-eigenvalues, the braid-reversed equivalence $\tau$ reverses twists. That is, if $\theta_{U_i}$ and $\theta_{V_i}$ are the scalars by which the twists act on the simple modules $U_i$ and $V_i$, respectively, then $\theta_{V_i'}=\theta_{V_i} =\theta_{U_i}^{-1}$.
\end{rema}

\subsection{Superalgebra extensions}\label{subsec:VOSAs}

Here we adapt Theorem \ref{thm:mirror_equiv} to coset extensions $U\otimes V\subseteq A$ where now $A$ is a conformal vertex superalgebra, using a trick due to Thomas Creutzig. There are two classes of conformal vertex superalgebra that we want to consider, called correct statistics and wrong statistics superalgebras in \cite{CKL}:
\begin{itemize}
 \item A conformal vertex superalgebra $A=A^\even\oplus A^\odd$ of correct statistics is $\frac{1}{2}\ZZ$-graded by $L(0)$-eigenvalues in such a way that $A^{\bar{j}}=\bigoplus_{n\in\frac{j}{2}+\ZZ} A_{(n)}$ for $j=0,1$.
 
 \item A vertex operator superalgebra of wrong statistics is $\ZZ$-graded by $L(0)$-eigenvalues.
\end{itemize}

We retain the assumptions on the vertex operator algebras $U$ and $V$ and on the tensor categories $\cU$ and $\cV$ from Section \ref{sec:mirror_equiv_setting}, but we replace the assumptions on $A$ with the following:
\begin{itemize}
 \item There is a conformal inclusion $U\otimes V\subseteq A^\even$ where $A=A^\even\oplus A^\odd$ is a simple conformal vertex superalgebra that is semisimple as a $U\otimes V$-module. That is, for $j\in\ZZ/2\ZZ$,
 \begin{equation*}
  A^j=\bigoplus_{i\in I^j} U_i\otimes V_i
 \end{equation*}
where the $U_i$ are distinct simple $U$-modules and the $V_i$ are semisimple $V$-modules.

\item Using $0\in I^\even$ to denote the index such that $U_0=U$, we have $V_0=V$ and
\begin{equation*}
 \dim\hom_{V}(V,V_i)=\delta_{i,0}
\end{equation*}
for $i\in I^\even\cup I^\odd$.
\end{itemize}
If the index set $I=I^\even\cup I^\odd$ is infinite, we also impose Assumption \ref{assum:inf_order}.

Now fix a simple vertex operator superalgebra $S=S^\even\oplus S^\odd$ with the same statistics as $A$ such that $S^\even$ is a simple self-contragredient vertex operator algebra and $S^\odd$ is a simple $C_1$-cofinite $S^\even$-module contained in some vertex algebraic braided tensor category of $S^\even$-modules. For example, if $A$ has correct statistics, we can take $S$ to be the vertex operator superalgebra of one free fermion, that is, $S^\even$ is the simple Virasoro vertex operator algebra $L(\frac{1}{2},0)$ and $S^\odd$ is its simple module $L(\frac{1}{2},\frac{1}{2})$. If $A$ has incorrect statistics, we can take $S$ to be the vertex operator superalgebra of one pair of symplectic fermions. Because $S^\even$ and $S^\odd$ are objects of some braided tensor category of $S^\even$-modules, \cite[Corollary 4.8]{McR} (see also \cite[Theorem 4.2]{CM} and \cite[Theorem 3.1]{CKLR}) shows that there is a semisimple braided ribbon category $\cS$ of $S^\even$-modules whose simple objects are $S^\even$ and $S^\odd$, and that $S^\odd\tens S^\odd\cong S^\even$.

So now we can replace the conformal vertex superalgebra $A$ with the conformal vertex algebra
\begin{equation*}
 \widetilde{A}=(S\otimes A)^\even =(S^\even\otimes A^\even)\oplus(S^\odd\otimes A^\odd)=\bigoplus_{i\in I^\even} (S^\even\otimes U_i)\otimes V_i\oplus\bigoplus_{i\in I^\odd} (S^\odd\otimes U_i)\otimes V_i.
\end{equation*}
We would like to show that $\widetilde{A}$ satisfies the assumptions in Section \ref{sec:mirror_equiv_setting}, with $U$ replaced by $S^\even\otimes U$ and $\cU$ replaced by $\cS\tens\cU$ (the semisimple category of $S^\even\otimes U$-modules whose simple objects are $S^j\otimes U_i$ for $i\in \widetilde{I}$ and $j\in\ZZ/2\ZZ$).

 First, $S^\even\otimes U$ is simple and self-contragredient because $S^\even$ and $U$ both are. We also have a conformal inclusion $(S^\even\otimes U)\otimes V\subseteq\widetilde{A}$ such that $\widetilde{A}$ is semisimple as a $(S^\even\otimes U)\otimes V$-module, but we need to show that $\widetilde{A}$ is simple. We first claim that $A^\even$ is a simple conformal vertex algebra and $A^\odd$ is a simple $A^\even$-module: Indeed, given any non-zero $b\in A^j$ for $j\in\ZZ/2\ZZ$, 
 \begin{equation*}
 A=\mathrm{span}\lbrace a_n b\,\vert\,a\in A^\even, n\in\ZZ\rbrace +\mathrm{span}\lbrace a_n b\,\vert\,a\in A^\odd, n\in\ZZ\rbrace
 \end{equation*}
 since $A$ is simple (see \cite[Proposition 4.5.6]{LL}). Since $a_n b\in A^{i+j}$ for $a\in A^i$, we get
 \begin{equation*}
 A^j=\mathrm{span}\lbrace a_n b\,\vert\, a\in A^0, n\in\ZZ\rbrace
 \end{equation*}
 for any non-zero $b\in A^j$, and thus $A^j$ is simple. Then
 \begin{equation*}
  \widetilde{A}=(S^\even\otimes A^\even)\oplus(S^\odd\otimes A^\odd)
 \end{equation*}
is a semisimple $S^\even\otimes A^\even$-module. 

The two simple $S^\even\otimes A^\even$-modules occurring in the decomposition of $\widetilde{A}$ are non-isomorphic since $S^\even$ and $S^\odd$ are distinct $S^\even$-modules. Thus any non-zero ideal $J\subseteq\widetilde{A}$ must contain either $S^\even\otimes A^\even$ or $S^\odd\otimes A^\odd$. If $J$ contains $S^\even\otimes A^\even$, then $J$ also contains $S^\odd\otimes A^\odd$ because 
\begin{equation*}
 s_1\otimes a_1=Y_{\widetilde{A}}(\vac,x)(s_1\otimes a_1)
\end{equation*}
for any $s_1\in S^\odd$, $a_1\in A^\odd$. If $J$ contains $S^\odd\otimes A^\odd$, then
\begin{equation*}
 Y_{\widetilde{A}}\vert_{(S^\odd\otimes A^\odd)\otimes(S^\odd\otimes A^\odd)}= Y_S\vert_{S^\odd\otimes S^\odd}\otimes Y_A\vert_{A^\odd\otimes A^\odd}
\end{equation*}
is an intertwining operator of type $\binom{S^\even\otimes A^\even}{S^\odd\otimes A^\odd\,S^\odd\otimes A^\odd}$ which is non-zero since simplicity of $S$ and $A$ implies $Y_S\vert_{S^\odd\otimes S^\odd}$ and $ Y_A\vert_{A^\odd\otimes A^\odd}$ are non-zero. But then $Y_{\widetilde{A}}\vert_{(S^\odd\otimes A^\odd)\otimes(S^\odd\otimes A^\odd)}$ is surjective since $S^\even\otimes A^\even$ is simple, so $J$ contains $S^\even\otimes A^\even$ as well. This shows that $\til{A}$ is simple.

Now it is clear that $S^\even\otimes U$ and $V$ form a dual pair inside $\widetilde{A}$, in the sense of Assumption \ref{assum:finite_I}. Next, for $i\in\widetilde{I}$ and $j\in\ZZ/2\ZZ$, the $S^j\otimes U_i$ are objects of the semisimple braided ribbon tensor category $\cS\tens\cU$ of $S^\even\otimes U$-modules. This category is locally-finite abelian and closed under contragredients since these properties hold for both $\cS$ and $\cU$. Finally, the $V_i$ are still objects of the braided tensor category $\cV$ of $V$-modules which is closed under submodules, quotients, and contragredients.

If the index set $I$ is infinite, we need to consider the conditions in Assumption \ref{assum:inf_order}. Every $S^\even\otimes U\otimes V$-module in $\cS\tens\cC=\cS\tens\cU\tens\cV$ is finitely generated since this holds for both $\cS$ and $\cC$. Now suppose $\cY$ is an intertwining operator of type $\binom{X_3}{X_1\,X_2}$ where $X_1$, $X_2$ are grading-restricted $S^\even\otimes U\otimes V$-modules in $\cS\tens\cC$ and $X_3$ is a generalized module in $\ind(\cS\tens\cC)$. Then $\im\cY$ will be a module in $\cS\tens\cC$ by Proposition \ref{prop:IndC_intw_op_cond} (with $\cU$ and $\cV$ replaced by $\cS$ and $\cC$, respectively), once we verify the conditions of that proposition. In fact, every $S^\even$-module in $\cS$ is semisimple and $C_1$-cofinite by assumption, and $\cC$ is closed under submodules, quotients and finite direct sums because $\cV$ is and because $\cU$ is semisimple (see \cite[Theorem 5.2]{CKM2}). Thus because we assume $\cC$ satisfies the intertwining operator condition of Assumption \ref{assum:inf_order}, Proposition \ref{prop:IndC_intw_op_cond} shows that $\cS\tens\cC$ does as well.

Theorem \ref{thm:mirror_equiv} now shows that the category $\cV_A$ of $V$-modules whose objects are finite direct sums of the $V_i$ is a semisimple braided ribbon category which is braid-reversed equivalent to a category of $(\cS\tens\cU)_{\til{A}}$ of $S^\even\otimes U$-modules. Now, the category $\cS$ of $S^\even$-modules is braided tensor equivalent to a cocycle twist of $\rep\ZZ/2\ZZ$ by \cite[Corollary 4.8]{McR}, and this cocycle twist is nothing but the braided tensor category of finite-dimensional vector superspaces. Using this equivalence of $\cS$ with superspaces, we can identify $(\cS\tens\cU)_{\til{A}}$ with a braided tensor category structure on the semisimple category $\cU_A$ of $U$-modules whose simple objects are the $U_i$, $i\in I$. The only difference between this braided tensor category structure on $\cU_A$ and that inherited from $\cU$ is a sign change on some of the braidings. That is, $\cU_A$ and $\cV_A$ are super-braid-reversed equivalent in the sense of the following definition:
\begin{defi}
 Let $\cC=\cC^\even\oplus\cC^\odd$ and $\mathcal{D}=\mathcal{D}^\even\oplus\mathcal{D}^\odd$ be $\ZZ/2\ZZ$-graded braided tensor categories. A \textit{super-braid-reversed equivalence} $(\tau,J,j):\cC\rightarrow\mathcal{D}$ is a tensor equivalence such that for $i\in\ZZ/2\ZZ$, $\tau(X)$ is an object of $\mathcal{D}^i$ if $X$ is an object of $\cC^i$, and such that the diagram
 \begin{equation*}
  \xymatrixcolsep{7pc}
  \xymatrix{
  \tau(X_1)\tens_{\mathcal{D}}\tau(X_2) \ar[r]^{\cR_{\tau(X_1),\tau(X_2)}} \ar[d]^{J_{X_1,X_2}} & \tau(X_2)\tens_{\mathcal{D}}\tau(X_1) \ar[d]^{J_{X_2,X_1}}\\
  \tau(X_1\tens_{\cC} X_2) \ar[r]^{(-1)^{i_1 i_2}\tau(\cR_{X_2,X_1}^{-1})} & \tau(X_2\tens_{\cC} X_1) \\
  }
 \end{equation*}
commutes for objects $X_1$ in $\cC^{i_1}$ and $X_2$ in $\cC^{i_2}$.
\end{defi}

The $\ZZ/2\ZZ$-gradings of $\cU_A$ and $\cV_A$ are defined so that $U_i$ and $V_i$ have degree $j\in\ZZ/2\ZZ$ if $i\in I^j$. In particular, if $i_1\in I^{j_1}$ and $i_2\in I^{j_2}$, then $U_{i_1}\tens_\cU U_{i_2}$ and $V_{i_1}\tens_\cV V_{i_2}$ have degree $j_1+j_2$. We summarize the discussion of this section in the following theorem:
\begin{theo}\label{thm:superalg_mirror_equiv}
 Let $U$ and $V$ be simple self-contragredient vertex operator algebras such that:
 \begin{itemize}
  \item There is an injective conformal vertex algebra homomorphism $\iota_A: U\otimes V\rightarrow A^\even$ with $A=A^\even\oplus A^\odd$ a simple conformal vertex superalgebra of either correct or wrong statistics.
  \item For $j\in\ZZ/2\ZZ$,
  \begin{equation*}
   A^j\cong\bigoplus_{i\in I^j} U_i\otimes V_i
  \end{equation*}
as a $U\otimes V$-module, where the $U_i$ are distinct simple $U$-modules and the $V_i$ are semisimple $V$-modules.

\item Using $0\in I^\even$ to denote the index such that $U_0=U$, we have $V_0=V$ and
\begin{equation*}
 \dim \hom_V(V,V_i)=\delta_{i,0}
\end{equation*}
for $i\in I^\even\cup I^\odd$.

\item The $U$-modules $U_i$ for $i\in I$ are objects of a locally-finite semisimple braided ribbon category $\cU$ of $U$-modules that is closed under contragredients.

\item The $V$-modules $V_i$ for $i\in I$ are objects of a braided tensor category $\cV$ of grading-restricted generalized $V$-modules which is closed under submodules, quotients, and contragredients.
 \end{itemize}
 If $I$ is infinite, assume in addition that:
 \begin{itemize}
  \item Every $V$-module in $\cV$ is finitely generated.
  \item For every intertwining operator $\cY$ of type $\binom{X_3}{M_1\otimes W_1\,M_2\otimes W_2}$, where $M_1$ and $M_2$ are $U$-modules in $\cU$, $W_1$ and $W_2$ are $V$-modules in $\cV$, and $X_3$ is a generalized $U\otimes V$-module in $\ind(\cU\tens\cV)$, the image $\im\cY\subseteq X_3$ is an object of $\cU\tens\cV$.
 \end{itemize}
 Let $\cU_A$ (respectively, $\cV_A$) be the category of $U$-modules (respectively, $V$-modules) whose objects are finite direct sums of the $U_i$ (respectively, of the $V_i$) for $i\in I$. Then:
 \begin{enumerate}
  \item The category $\cU_A$ of $U$-modules is a tensor subcategory of $\cU$.
  
  \item The category $\cV_A$ of $V$-modules is a semisimple ribbon subcategory of $\cV$ with distinct simple objects $\lbrace V_i\rbrace_{i\in I}$. In particular, $\cV_A$ is rigid.
  
  \item There is a super-braid-reversed tensor equivalence $\tau: \cU_A\rightarrow\cV_A$ such that $\tau(U_i)\cong V_i'$ for $i\in I$, where $V_i'$ is the contragredient of $V_i$.
 \end{enumerate}
\end{theo}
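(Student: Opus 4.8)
The plan is to reduce to the non-super Theorem \ref{thm:mirror_equiv} by the tensoring trick sketched in this subsection. First I would fix an auxiliary simple vertex operator superalgebra $S=S^\even\oplus S^\odd$ of the same statistics as $A$ (one free fermion when $A$ has correct statistics, one pair of symplectic fermions when $A$ has wrong statistics), with $S^\even$ simple self-contragredient, $S^\odd$ a simple $C_1$-cofinite $S^\even$-module, and $S^\even,S^\odd$ the simple objects of a semisimple braided ribbon category $\cS$ of $S^\even$-modules with $S^\odd\tens S^\odd\cong S^\even$, all guaranteed by \cite[Corollary 4.8]{McR}. Then I would replace $A$ by the genuine conformal vertex algebra $\widetilde{A}=(S\otimes A)^\even=(S^\even\otimes A^\even)\oplus(S^\odd\otimes A^\odd)$, viewed as a $(S^\even\otimes U)\otimes V$-module via its decomposition $\bigoplus_{i\in I^\even}(S^\even\otimes U_i)\otimes V_i\oplus\bigoplus_{i\in I^\odd}(S^\odd\otimes U_i)\otimes V_i$, with $U$ replaced by $S^\even\otimes U$, $\cU$ replaced by the semisimple category $\cS\tens\cU$, and $\cV$, $V$ unchanged.

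Next I would check that $\widetilde{A}$ satisfies the hypotheses of Theorem \ref{thm:mirror_equiv} (i.e.\ Assumption \ref{assum:finite_I}, plus Assumption \ref{assum:inf_order} if $I$ is infinite). Simplicity and self-contragredience of $S^\even\otimes U$ are immediate; local finiteness, the ribbon structure, and closure under contragredients for $\cS\tens\cU$ are inherited from $\cS$ and $\cU$; $\cV$ is unchanged; and the dual-pair condition $\dim\hom_V(V,V_i)=\delta_{i,0}$ is hypothesized. The one substantive point is that $\widetilde{A}$ is simple. For this I would first show, using that $A$ is simple, that $A^\even$ is a simple conformal vertex algebra and $A^\odd$ a simple $A^\even$-module (any nonzero $b\in A^j$ generates $A$ under the action of $A^\even\oplus A^\odd$, and $A^i$ sends $A^j$ into $A^{i+j}$). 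Then $\widetilde{A}$ is a semisimple $S^\even\otimes A^\even$-module whose two simple summands $S^\even\otimes A^\even$ and $S^\odd\otimes A^\odd$ are non-isomorphic, so any nonzero ideal contains one of them; if it contains $S^\odd\otimes A^\odd$, surjectivity of the nonzero intertwining operator $Y_S\vert_{S^\odd\otimes S^\odd}\otimes Y_A\vert_{A^\odd\otimes A^\odd}$ of type $\binom{S^\even\otimes A^\even}{S^\odd\otimes A^\odd\,\,S^\odd\otimes A^\odd}$ onto the simple module $S^\even\otimes A^\even$ forces it to contain $S^\even\otimes A^\even$ as well, while if it contains $S^\even\otimes A^\even$ the vacuum vertex operator recovers $S^\odd\otimes A^\odd$; thus the ideal is all of $\widetilde{A}$. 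In the infinite-$I$ case I would verify Assumption \ref{assum:inf_order} by invoking Proposition \ref{prop:IndC_intw_op_cond} with $\cU,\cV$ there replaced by $\cS$ and $\cC=\cU\tens\cV$: $\cS$ is semisimple and $C_1$-cofinite by construction, $\cC$ is closed under submodules, quotients, and finite direct sums because $\cV$ is and $\cU$ is semisimple, and $\cC$ satisfies the required intertwining operator condition by hypothesis.

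Having checked the hypotheses, Theorem \ref{thm:mirror_equiv} applied to $\widetilde{A}$ gives at once that $\cV_A$ is a semisimple ribbon subcategory of $\cV$ with distinct simple objects $V_i$, $i\in I$ (hence rigid), and that $\cV_A$ is braid-reversed tensor equivalent to the semisimple braided ribbon subcategory $(\cS\tens\cU)_{\widetilde{A}}$ of $\cS\tens\cU$ generated by the $S^j\otimes U_i$ with $i\in I^j$, via an equivalence $\tau$ with $\tau(S^j\otimes U_i)\cong V_i'$. The last step is to unwind the $\cS$-factor: by \cite[Corollary 4.8]{McR}, $\cS$ is braided tensor equivalent to a cocycle twist of $\rep\ZZ/2\ZZ$, i.e.\ to the braided tensor category of finite-dimensional super vector spaces, and under this identification $(\cS\tens\cU)_{\widetilde{A}}$ is identified with the semisimple category $\cU_A$ of $U$-modules with simple objects $U_i$, $i\in I$, carrying the monoidal structure inherited from $\cU$ (so that part (1) holds) but with the braiding modified by the sign $(-1)^{j_1 j_2}$ on $U_{i_1}\tens_\cU U_{i_2}$ for $i_k\in I^{j_k}$ coming from the symmetric braiding of super vector spaces. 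Carrying these signs through $\tau$ yields precisely that $\tau:\cU_A\rightarrow\cV_A$ is a super-braid-reversed tensor equivalence with $\tau(U_i)\cong V_i'$. The main obstacles I anticipate are establishing the simplicity of $\widetilde{A}$ cleanly from that of $A$ and $S$, and carefully bookkeeping how the braiding signs propagate through the identification $(\cS\tens\cU)_{\widetilde{A}}\simeq\cU_A$ so that the sign convention in the definition of super-braid-reversed equivalence comes out correctly.
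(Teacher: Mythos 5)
Your proposal is correct and follows essentially the same route as the paper: introduce the auxiliary superalgebra $S$, pass to $\widetilde{A}=(S\otimes A)^{\even}$ with $U$ replaced by $S^{\even}\otimes U$ and $\cU$ by $\cS\tens\cU$, verify simplicity of $\widetilde{A}$ via the two-step ideal argument, check the infinite-$I$ hypotheses through Proposition \ref{prop:IndC_intw_op_cond}, apply Theorem \ref{thm:mirror_equiv}, and then strip off the $\cS$-factor using the identification of $\cS$ with super vector spaces to absorb the sign into the definition of super-braid-reversed equivalence. This matches the paper's proof in both structure and detail.
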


\begin{rema}
 In the setting of the preceding theorem, the relation between the twists on $\cU_A$ and $\cV_A$ depends on the statistics of $A$: If $A$ has correct statistics, then for $j\in\ZZ/2\ZZ$ and $i\in I^j$, $\tau(\theta_{U_i})=(-1)^j\theta_{\tau(U_i)}^{-1}$, while if $A$ has wrong statistics, then $\tau(\theta_{U_i})=\theta_{\tau(U_i)}^{-1}$. 
\end{rema}

\section{Application to Virasoro tensor categories}\label{sec:Vir_app}

Let $V_{c(t)}$ be the universal Virasoro vertex operator algebra of central charge $c(t)=13-6t-6t^{-1}$, $t\in\CC^\times$, and let $\cO_t:=\cC^1_{V_{c(t)}}$ be the category of $C_1$-cofinite grading-restricted generalized $V_{c(t)}$-modules. It is proved in \cite{CJORY} that $\cO_t$ admits the vertex algebraic braided tensor category structure of \cite{HLZ1}-\cite{HLZ8}, as described in Section \ref{subsec:VOAs}. In this section, we will use Theorem \ref{thm:mirror_equiv} to show that $\cO_{-p}$ for $p\in\ZZ_{\geq 2}$ contains a rigid semisimple tensor subcategory that is braided equivalent to an abelian $3$-cocycle twist of the category $\rep\mathfrak{sl}_2$ of finite-dimensional $\mathfrak{sl}_2$-modules (the case $p=1$ was addressed in \cite{MY2}). We will then show that $V_{c(-p)}$ is the $PSL_2(\CC)$-fixed point subalgebra of a simple conformal vertex algebra $\cW(-p)$, analogous to the realization of $V_{c(p)}$ for $p\in\ZZ_{\geq 2}$ as the $PSL_2(\CC)$-fixed point subalgebra of the non-rational but $C_2$-cofinite triplet $W$-algebra $\cW(p)$ \cite{AdM, ALM}.

To apply Theorem \ref{thm:mirror_equiv} to $\cO_{-p}$, we want to realize $V_{c(p)}\otimes V_{c(-p)}$ as a vertex operator subalgebra of a simple conformal vertex algebra. The correct conformal vertex algebra to use is the \textit{chiral universal centralizer} $\mathbf{I}^k_{SL_2}$ of $SL_2$ at level $k=-2+\frac{1}{p}$, constructed in \cite[Section 7]{Ar} by a two-step process of quantized Drinfeld-Sokolov reduction: One starts with the conformal vertex algebra $\mathcal{D}_{SL_2,k}^{ch}$ of chiral differential operators on $SL_2$ at level $k$, which is an extension of a tensor product of universal affine vertex operator algebras $V^k(\mathfrak{sl}_2)\otimes V^{k^*}(\mathfrak{sl}_2)$ with $k^*=-2-\frac{1}{p}$. Treating $\mathcal{D}_{SL_2,k}^{ch}$ as an $\NN$-gradable weak $V^k(\mathfrak{sl}_2)$-module, one applies the quantized Drinfeld-Sokolov reduction functor $H_{DS,f}^0$ with respect to the negative root vector $f\in\mathfrak{sl}_2$ to obtain the \textit{equivariant affine $W$-algebra} $\textbf{W}^k_{SL_2,f}:=H_{DS,f}^0(\mathcal{D}^{ch}_{SL_2,k})$ (see \cite[Section 6]{Ar}). One then treats $\mathbf{W}^k_{SL_2,f}$ as an $\NN$-gradable weak $V^{k^*}(\mathfrak{sl}_2)$-module and applies $H_{DS,f}^0$ again to obtain $\mathbf{I}_{SL_2}^k:=H_{DS,f}^0(\mathbf{W}_{SL_2,f}^k)$.

Since the quantized Drinfeld-Sokolov reductions of $V^k(\mathfrak{sl}_2)$ and $V^{k^*}(\mathfrak{sl}_2)$ are the Virasoro vertex operator algebras $V_{c(p)}$ and $V_{c(-p)}$, respectively, $\mathbf{I}_{SL_2}^k$ will be an extension of $V_{c(p)}\otimes V_{c(-p)}$. We need to determine its decomposition as a $V_{c(p)}\otimes V_{c(-p)}$-module. To do so, we first recall some results on Verma and generalized Verma modules for the affine Lie algebra $\widehat{\mathfrak{sl}}_2$. We take a standard basis $\lbrace e,f,h\rbrace$ for $\mathfrak{sl}_2$, and we recall the affine Lie algebra
\begin{equation*}
\widehat{\mathfrak{sl}}_2 =\mathfrak{sl}_2\otimes\CC[t,t^{-1}]\oplus\CC\mathbf{k}.
\end{equation*}
For $a\in\mathfrak{sl}_2$ and $n\in\ZZ$, we use $a(n)$ to denote the action of $a\otimes t^n$ on an $\widehat{\mathfrak{sl}}_2$-module. Such a module $M$ has level $k$ if $\mathbf{k}$ acts on $M$ by the scalar $k\in\CC$. For $k,r\in\CC$, let $M^k_r$ denote the Verma $\widehat{\mathfrak{sl}}_2$-module of level $k$ with a highest-weight vector on which $h(0)$ acts by $r-1$. For $r\in\ZZ_+$, the Verma module $M^k_r$ has the generalized Verma module quotient
\begin{equation*}
V^k_r = \ind^{\widehat{\mathfrak{sl}}_2}_{(\widehat{\mathfrak{sl}}_2)_{\geq 0}} V(r-1),
\end{equation*}
where $(\widehat{\mathfrak{sl}}_2)_{\geq 0}=\mathfrak{sl}_2\otimes\CC[t]\oplus\CC\mathbf{k}$, $V(r-1)$ is the $r$-dimensional simple $\mathfrak{sl}_2$-module, and $a(n)$ acts by zero on $V(r-1)$ for $a\in\mathfrak{sl}_2$, $n>0$. The Verma module $M^k_r$ (and also $V^k_r$ when $r\in\ZZ_+$) has a unique irreducible quotient $L^k_r$ when $k\neq -2$.

By the proof of \cite[Proposition 2.1]{Le} (or \cite[Proposition 3.5]{MY-intw-op}) there is an exact sequence
\begin{equation*}
0\longrightarrow M^k_{-r} \longrightarrow M^k_r\longrightarrow V^k_r\longrightarrow 0
\end{equation*}
for $r\in\ZZ_+$ and any $k$. But for $k=-2\pm\frac{1}{p}$, $p\in\ZZ_+$, there is also an exact sequence
\begin{equation*}
0\longrightarrow M^k_{-r} \longrightarrow M^k_r\longrightarrow L^k_r\longrightarrow 0
\end{equation*}
for $r\in\ZZ_+$. For $k=-2+\frac{1}{p}$, this follows from the second case of \cite[Theorem A(1)]{Ma}, while for $k=-2-\frac{1}{p}$, this follows from the second case of \cite[Theorem A(2)]{Ma}. Thus for $k=-2\pm\frac{1}{p}$, $L^k_r$ is a simple quotient of $V^k_r$ with the same character of $V^k_r$. That is, all generalized Verma modules $V^k_r$ are simple when $k=-2\pm\frac{1}{p}$.

For $k=-2\pm\frac{1}{p}$, $p\in\ZZ_+$, let $V^k(\mathfrak{sl}_2)$ be the level-$k$ universal affine vertex operator algebra (which equals $V^k_1$ as an $\widehat{\mathfrak{sl}}_2$-module and thus is also a simple vertex operator algebra). Also let $KL^k(\mathfrak{sl}_2)$ be the Kazhdan-Lusztig category of finitely-generated grading-restricted generalized $V^k(\mathfrak{sl}_2)$-modules. Because any module $W$ in $KL^k(\mathfrak{sl}_2)$ has finite-dimensional conformal weight spaces, a lower bound on conformal weights, and finitely many generators, $W$ has a finite filtration
\begin{equation*}
0 = W_0\subseteq W_1\subseteq W_2\subseteq\cdots\subseteq W_n= W
\end{equation*}
such that each $W_i/W_{i-1}$ is a quotient of a generalized Verma module (in particular, $W_i/W_{i-1}$ for $i\geq 1$ is the submodule of $W/W_{i-1}$ generated by a finite-dimensional irreducible $\mathfrak{sl}_2$-submodule contained in a minimal conformal weight space of $W/W_{i-1}$). Thus because generalized Verma modules are simple for $k=-2\pm\frac{1}{p}$, every module in $KL^k(\mathfrak{sl}_2)$ has finite length. Moreover, all short exact sequences of the form
\begin{equation*}
0\longrightarrow V^k_{r_1}\longrightarrow W\longrightarrow V^k_{r_2}\longrightarrow 0
\end{equation*}
split (by essentially the same argument as in \cite[Lemma 5.1.1]{CJORY}). Thus $KL^k(\mathfrak{sl}_2)$ is semisimple for $k=-2\pm\frac{1}{p}$, and then so is its direct limit completion $\ind(KL^k(\mathfrak{sl}_2))$. Note that, at least for the levels we are considering, $\ind(KL^k(\mathfrak{sl}_2))$ is the category that is called $KL_k$ in \cite[Section 3]{Ar}; thus a \textit{vertex algebra object in $KL_k$} in the sense of \cite{Ar} is a vertex algebra $V$ containing a vertex subalgebra isomorphic to $V^k(\mathfrak{sl}_2)$ such that $V$ is an object of $\ind(KL^k(\mathfrak{sl}_2))$ as a $V^k(\mathfrak{sl}_2)$-module. 

Now we return to the equivariant affine $W$-algebra $\mathbf{W}^k_{SL_2,f}$ of $SL_2$ at level $k=-2+\frac{1}{p}$, $p\in\ZZ_+$. By \cite[Equation 55]{Ar}, $\mathbf{W}^k_{SL_2,f}$ contains a vertex subalgebra isomorphic to $V_{c(p)}\otimes V^{k^*}(\mathfrak{sl}_2)$ where $k^*=-2-\frac{1}{p}$; moreover, $\mathbf{W}^k_{SL_2,f}$ is an object of $\ind(KL^{k^*}(\mathfrak{sl}_2))$ as a $V^{k^*}(\mathfrak{sl}_2)$-module by \cite[Proposition 6.6]{Ar}. Since $\ind(KL^{k^*}(\mathfrak{sl}_2))$ is semisimple, the decomposition of $\mathbf{W}_{SL_2,f}^k$ as a $V_{c(p)}\otimes V^{k^*}(\mathfrak{sl}_2)$-module is completely determined by the decomposition of its top level
\begin{equation*}
T(\mathbf{W}_{SL_2,f}^k) :=\lbrace v\in\mathbf{W}_{SL_2,f}^k\,\,\vert\,\,a(n)v=0\,\,\text{for all}\,\,a\in\mathfrak{sl}_2, n>0\rbrace
\end{equation*}
as a $V_{c(p)}\times\mathfrak{sl}_2$-module. This decomposition of $T(\mathbf{W}_{SL_2,f}^k)$ is given in \cite[Proposition 6.5]{Ar}, and it follows that
\begin{equation}\label{eqn:equiv_aff_W_decomp}
\mathbf{W}^k_{SL_2,f}\cong\bigoplus_{r\in\ZZ_+} H^0_{DS,f}(V^k_r)\otimes V^{k^*}_r
\end{equation}
as a $V_{c(p)}\otimes V^{k^*}(\mathfrak{sl}_2)$-module. Here the Drinfeld-Sokolov reduction $H^0_{DS,f}(V^k_r)$ is a $V_{c(p)}$-module which is simple (by the Main Theorem of \cite{Ar-KRW-conj} since $V^k_r$ is a simple non-integrable $\widehat{\mathfrak{sl}}_2$-module) and has lowest conformal weight $\frac{r^2-1}{4}p-\frac{r-1}{2}$ (see \cite[Equation 59]{Ar}).

Next we recall from \cite{CJORY} the simple objects in the categories $\cO_{\pm p}$ of $C_1$-cofinite grading-restricted generalized $V_{c(\pm p)}$-modules. These categories contain in particular the simple $V_{c(\pm p)}$-modules $\mathcal{L}_{r,s}^{\pm p}$ for $r,s\in\ZZ_+$ whose lowest conformal weights are
\begin{equation*}
h_{r,s}(\pm p) = \pm\frac{r^2-1}{4}p-\frac{rs-1}{2}\pm\frac{s^2-1}{4p}.
\end{equation*}
Thus $\mathcal{L}_{r,1}^{p}\cong H_{DS,f}^0(V^k_r)$ in the notation of the previous paragraph. Further, when we apply Drinfeld-Sokolov reduction to the equivariant affine $W$-algebra $\mathbf{W}_{SL_2,f}^k$ to obtain the chiral universal centralizer $\mathbf{I}_{SL_2}^k$ of \cite[Section 7]{Ar}, the decomposition \eqref{eqn:equiv_aff_W_decomp} implies that
\begin{equation}\label{eqn:chiral_univ_cent_decomp}
\mathbf{I}_{SL_2}^k\cong\bigoplus_{r\in\ZZ_+} \mathcal{L}_{r,1}^p\otimes\mathcal{L}_{r,1}^{-p}
\end{equation}
as a $V_{c(p)}\otimes V_{c(-p)}$-module. Since the lowest conformal weight of $\mathcal{L}_{r,1}^p\otimes\mathcal{L}_{r,1}^{-p}$ is $-r+1$, $\mathbf{I}^k_{SL_2}$ is a $\mathbb{Z}$-graded conformal vertex algebra. We can see that $\mathbf{I}_{SL_2}^k$ is simple from \cite[Corollary 9.3]{AM} (see also \cite[Theorem 2.2]{Ar}), which states that a vertex algebra is simple if its Li filtration is separated and its associated scheme is a smooth reduced symplectic variety. For $\mathbf{I}_{SL_2}^k$, the first condition follows from \cite[Theorem 6.1(ii)]{Ar} and the second from \cite[Equation 64]{Ar}.

In the notation of Assumption \ref{assum:finite_I}, we now take $U=V_{c(p)}$ and $V=V_{c(-p)}$ for $p\in\ZZ_{\geq 2}$, and we take $A=\mathbf{I}_{SL_2}^k$ with $k=-2+\frac{1}{p}$. Using \eqref{eqn:chiral_univ_cent_decomp}, conditions (1), (2), and (3) of Assumption \ref{assum:finite_I} hold for this choice of $U$, $V$, and $A$. Next, we take the category $\cU$ of Assumption \ref{assum:finite_I}(4) to be the semisimple subcategory $\cO_p^L\subseteq\cO_p$ with simple objects $\cL_{r,1}^p$, $r\in\ZZ_+$. The category $\cO_p^L$ is closed under contragredients since simple $V_{c(p)}$-modules are self-contragredient, and it is a braided ribbon tensor subcategory of $\cO_p$ by  \cite[Theorem 4.3]{MY}. We take the category $\cV$ of Assumption \ref{assum:finite_I}(5) to be $\cO_{-p}$, which is a braided tensor category by \cite[Theorem 4.2.6]{CJORY}. Since every $V_{c(-p)}$-module in $\cO_{-p}$ is finitely generated, Assumption \ref{assum:inf_order}(1) also holds in our setting. Finally, Assumption \ref{assum:inf_order}(2) holds via Proposition \ref{prop:IndC_intw_op_cond}, since $\cO_{-p}$ satisfies the intertwining operator condition of Assumption \ref{assum:inf_order}(2) by \cite[Corollary 2.12]{CMY}. We can now apply Theorem \ref{thm:mirror_equiv}:
\begin{theo}\label{thm:main_thm_app}
For $p\in\ZZ_{\geq 2}$, let $\cO_{-p}^L$ be the semisimple category of $V_{c(-p)}$-modules with simple objects $\cL_{r,1}^{-p}$, $r\in\ZZ_+$. Then $\cO_{-p}^L$ is a rigid tensor subcategory of $\cO_{-p}$, and there is a braid-reversed tensor equivalence $\tau: \cO_p^L\rightarrow\cO_{-p}^L$ such that $\tau(\cL_{r,1}^p)\cong\cL_{r,1}^{-p}$ for all $r\in\ZZ_+$.
\end{theo}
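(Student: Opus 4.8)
The plan is to obtain this theorem as a direct application of Theorem \ref{thm:mirror_equiv}, using the dictionary set up in the discussion above: take $U=V_{c(p)}$, $V=V_{c(-p)}$, and $A=\mathbf{I}_{SL_2}^k$ with $k=-2+\frac{1}{p}$, so that by \eqref{eqn:chiral_univ_cent_decomp} the decomposition $A\cong\bigoplus_{i\in I}U_i\otimes V_i$ has index set $I=\ZZ_+$ with $U_r=\cL_{r,1}^p$ and $V_r=\cL_{r,1}^{-p}$, the distinguished index ``$0$'' of Theorem \ref{thm:mirror_equiv} being $r=1$ here since $\cL_{1,1}^p=V_{c(p)}$. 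With $\cU=\cO_p^L$ and $\cV=\cO_{-p}$, the categories $\cU_A$ and $\cV_A$ of Theorem \ref{thm:mirror_equiv} are precisely $\cO_p^L$ and $\cO_{-p}^L$, respectively, so it suffices to check the hypotheses of that theorem and read off its conclusion.

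First I would confirm that all the hypotheses hold, which amounts to assembling facts already recorded. Conditions (1)--(3) of Assumption \ref{assum:finite_I} follow from \eqref{eqn:chiral_univ_cent_decomp}, the simplicity of $\mathbf{I}_{SL_2}^k$ (via \cite[Corollary 9.3]{AM}, \cite[Theorem 6.1(ii)]{Ar}, and \cite[Equation 64]{Ar}), and the fact that the $\cL_{r,1}^{-p}$ are pairwise non-isomorphic simple $V_{c(-p)}$-modules with only $\cL_{1,1}^{-p}=V_{c(-p)}$ admitting a nonzero homomorphism from $V$. Condition (4) holds because $\cO_p^L$ is a semisimple braided ribbon tensor subcategory of $\cO_p$ closed under contragredients by \cite[Theorem 4.3]{MY}, simple Virasoro modules being self-contragredient; and condition (5) holds because $\cO_{-p}=\cC^1_{V_{c(-p)}}$ is a braided tensor category by \cite{CJORY} that is closed under submodules, quotients, and contragredients. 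Since $I=\ZZ_+$ is infinite, I also need Assumption \ref{assum:inf_order}: finite generation of every module in $\cO_{-p}$ is automatic, and the intertwining-operator condition follows from Proposition \ref{prop:IndC_intw_op_cond}, using that $\cO_{-p}$ itself satisfies the condition of Assumption \ref{assum:inf_order}(2) by \cite[Corollary 2.12]{CMY} and that every object of $\cO_p^L$ is semisimple and $C_1$-cofinite.

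With the hypotheses in place, Theorem \ref{thm:mirror_equiv} gives directly that $\cO_{-p}^L=\cV_A$ is a semisimple braided ribbon subcategory of $\cO_{-p}$ with distinct simple objects $\cL_{r,1}^{-p}$, $r\in\ZZ_+$; in particular it is rigid. It also gives a braid-reversed tensor equivalence $\tau\colon\cO_p^L\to\cO_{-p}^L$ with $\tau(U_i)\cong V_i'$. Here the prime involution $i\mapsto i'$ of Lemma \ref{lem:prime_involution} is the identity, since the $U_i=\cL_{i,1}^p$ are self-contragredient and pairwise non-isomorphic, so $\tau(\cL_{r,1}^p)\cong(\cL_{r,1}^{-p})'\cong\cL_{r,1}^{-p}$, again by self-contragredience of simple Virasoro modules. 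Essentially all of the analytic and categorical content has already been absorbed into Theorem \ref{thm:mirror_equiv}, \cite[Theorem 4.3]{MY}, and Arakawa's decomposition \eqref{eqn:chiral_univ_cent_decomp}, so the only real work here is this bookkeeping; the point that deserves the most care is the verification of the infinite-$I$ conditions of Assumption \ref{assum:inf_order} via Proposition \ref{prop:IndC_intw_op_cond}.
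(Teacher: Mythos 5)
Your proposal is correct and matches the paper's proof essentially line by line: both reduce Theorem \ref{thm:main_thm_app} to a direct application of Theorem \ref{thm:mirror_equiv} with $U=V_{c(p)}$, $V=V_{c(-p)}$, $A=\mathbf{I}_{SL_2}^k$, $\cU=\cO_p^L$, $\cV=\cO_{-p}$, verifying Assumptions \ref{assum:finite_I} and \ref{assum:inf_order} via \eqref{eqn:chiral_univ_cent_decomp}, \cite[Theorem 4.3]{MY}, \cite{CJORY}, Proposition \ref{prop:IndC_intw_op_cond}, and \cite[Corollary 2.12]{CMY}. Your closing remark that $\tau(\cL_{r,1}^p)\cong(\cL_{r,1}^{-p})'\cong\cL_{r,1}^{-p}$ by self-contragredience of simple Virasoro modules is a helpful bit of bookkeeping the paper leaves implicit.
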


From the $\mathfrak{sl}_2$-type fusion rules for $\cO_p^L$ \cite[Theorem 4.3]{MY}, we then get:
\begin{corol}\label{cor:O_-p_L_fus_rules}
The following tensor product formula holds in $\cO_{-p}$, $p\in\ZZ_{\geq 2}$: for $r,r'\in\ZZ_+$,
\begin{equation*}
 \cL_{r,1}^{-p}\tens\cL_{r',1}^{-p}\cong\bigoplus_{\substack{k=\vert r-r'\vert+1\\ k+r+r'\equiv 1\,(\mathrm{mod}\,2)}}^{r+r'-1} \cL_{k,1}^{-p}.
\end{equation*}

\end{corol}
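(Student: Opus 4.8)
The plan is to transport the $\mathfrak{sl}_2$-type fusion rules on $\cO_p^L$ across the braid-reversed tensor equivalence $\tau$ of Theorem \ref{thm:main_thm_app}. First I would recall from \cite[Theorem 4.3]{MY} that $\cO_p^L$ is braided tensor equivalent, up to an abelian $3$-cocycle twist, to $\rep\mathfrak{sl}_2$; in particular, for $r,r'\in\ZZ_+$,
\begin{equation*}
 \cL_{r,1}^{p}\tens\cL_{r',1}^{p}\cong\bigoplus_{\substack{k=\vert r-r'\vert+1\\ k+r+r'\equiv 1\,(\mathrm{mod}\,2)}}^{r+r'-1}\cL_{k,1}^{p}.
\end{equation*}

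Next, since $\tau:\cO_p^L\rightarrow\cO_{-p}^L$ is in particular a strong monoidal equivalence (the braid-reversal only affects compatibility with the braidings, not the underlying monoidal structure isomorphisms), it preserves finite direct sums and simple objects and commutes with the tensor product up to the natural isomorphisms $J$. Applying $\tau$ to the above decomposition therefore gives
\begin{equation*}
 \tau(\cL_{r,1}^{p})\tens\tau(\cL_{r',1}^{p})\cong\tau(\cL_{r,1}^{p}\tens\cL_{r',1}^{p})\cong\bigoplus_{\substack{k=\vert r-r'\vert+1\\ k+r+r'\equiv 1\,(\mathrm{mod}\,2)}}^{r+r'-1}\tau(\cL_{k,1}^{p}),
\end{equation*}
and then invoking the identifications $\tau(\cL_{k,1}^{p})\cong\cL_{k,1}^{-p}$ from Theorem \ref{thm:main_thm_app} turns this into the asserted formula, with the tensor products computed in $\cO_{-p}^L$.

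Finally I would observe that because $\cO_{-p}^L$ is a tensor subcategory of $\cO_{-p}$ by Theorem \ref{thm:main_thm_app}, the tensor product of $\cL_{r,1}^{-p}$ and $\cL_{r',1}^{-p}$ computed in $\cO_{-p}^L$ coincides, up to canonical isomorphism, with the one computed in $\cO_{-p}$, which is the statement of the corollary. There is no real obstacle here: the entire content of the corollary is already contained in Theorem \ref{thm:main_thm_app} together with \cite[Theorem 4.3]{MY}, and the only mildly delicate point --- that restricting the tensor product bifunctor of $\cO_{-p}$ to the subcategory $\cO_{-p}^L$ is precisely what endows the latter with its tensor structure --- is built into the notion of tensor subcategory used throughout the paper.
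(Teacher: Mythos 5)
Your argument is exactly the paper's: transport the $\mathfrak{sl}_2$-type fusion rules on $\cO_p^L$ from \cite[Theorem 4.3]{MY} across the braid-reversed (in particular, monoidal) equivalence $\tau$ of Theorem \ref{thm:main_thm_app}, using $\tau(\cL_{r,1}^p)\cong\cL_{r,1}^{-p}$ and the fact that $\cO_{-p}^L$ is a tensor subcategory of $\cO_{-p}$. The proposal is correct and follows the same route.
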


\begin{rema}
It is shown in the proof of \cite[Theorem 4.3]{MY} that for $p\in\ZZ_{\geq 2}$, $\cO_p^L$ is braided tensor equivalent to a ($p$-dependent) twist of the category $\rep\mathfrak{sl}_2$ of finite-dimensional $\mathfrak{sl}_2$-modules by an abelian $3$-cocycle on $\ZZ/2\ZZ$. Thus the same holds for $\cO_{-p}^L$ (with the inverse abelian $3$-cocycle for $\cO_{-p}^L$ as for $\cO_p^L$).
\end{rema}

\begin{rema}
The $p=1$ versions of Theorem \ref{thm:main_thm_app} and Corollary \ref{cor:O_-p_L_fus_rules} have already been proved directly in \cite{MY2}. The advantage of the direct proof is that one can then use the braid-reversed equivalence to construct the chiral universal centralizer $\mathbf{I}_{SL_2}^{-1}$ by the method of gluing vertex algebras from \cite{CKM2}.
\end{rema}

We can now construct simple conformal vertex algebra extensions of $V_{c(-p)}$, $p\geq 2$, using the method of \cite[Section 7]{MY2} (which covers the $p=1$ case). First, the braid-reversed tensor equivalence $\tau: \cO_p^L\rightarrow\cO_{-p}^L$ of Theorem \ref{thm:main_thm_app} extends to a braid-reversed tensor equivalence $T:\ind(\cO_p^L)\rightarrow\ind(\cO_{-p}^L)$ (these direct limit completions are semisimple tensor subcategories of the braided tensor categories $\ind(\cO_{\pm p})$). Next, we recall the triplet $W$-algebra $\cW(p)$, which is a simple vertex operator algebra extension of $V_{c(p)}$ with automorphism group $PSL_2(\CC)$ such that $V_{c(p)}$ is the $PSL_2(\CC)$-fixed point subalgebra \cite{AdM, ALM}. As a $PSL_2(\CC)\times V_{c(p)}$-module,
\begin{equation*}
\cW(p)\cong\bigoplus_{n=0}^\infty V(2n)\otimes\cL_{2n+1,1}^p,
\end{equation*}
where $V(2n)$ is the $(2n+1)$-dimensional irreducible $PSL_2(\CC)$-module.
\begin{theo}\label{thm:W(-p)}
For $p\in\ZZ_{\geq 2}$, there is a unique simple conformal vertex algebra $\cW(-p)$ of central charge $c(-p)=13+6p+6p^{-1}$ such that
\begin{equation}\label{eqn:W(-p)_decomp}
\cW(-p)\cong\bigoplus_{n=0}^\infty V(2n)\otimes\cL_{2n+1,1}^{-p}
\end{equation}
as a $V_{1c(-p)}$-module. Moreover, $PSL_2(\CC)$ acts by conformal vertex algebra automorphisms on $\cW(-p)$ with fixed-point subalgebra $V_{c(-p)}$ such that \eqref{eqn:W(-p)_decomp} gives the decomposition of $\cW(-p)$ as a $PSL_2(\CC)$-module. 
\end{theo}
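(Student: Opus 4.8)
The plan is to construct $\cW(-p)$ by transporting the triplet $W$-algebra $\cW(p)$ across the braid-reversed tensor equivalence of Theorem \ref{thm:main_thm_app}, following the method of \cite[Section 7]{MY2}. First extend $\tau\colon\cO_p^L\to\cO_{-p}^L$ to a braid-reversed tensor equivalence $T\colon\ind(\cO_p^L)\to\ind(\cO_{-p}^L)$ of the semisimple direct limit completions, which are tensor subcategories of $\ind(\cO_{\pm p})$. By \cite{HKL, CKM1, CMY}, the simple vertex operator algebra $\cW(p)$ is a commutative, associative, unital algebra object in $\ind(\cO_p^L)$ (its tensor powers stay in the ``odd'' part of $\cO_p^L$ by the fusion rules of \cite[Theorem 4.3]{MY}); it is \emph{haploid}, i.e.\ $\dim\hom_{V_{c(p)}}(V_{c(p)},\cW(p))=1$, since $\cL_{1,1}^p=V_{c(p)}$ while the $\cL_{2n+1,1}^p$ for $n\geq 1$ are not isomorphic to $V_{c(p)}$; and, being $\NN$-graded, it has trivial twist $\theta_{\cW(p)}=\Id$. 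Moreover $PSL_2(\CC)$ acts on $\cW(p)$ by algebra automorphisms, and since $T$ is a tensor functor it carries $\cW(p)$ to a commutative associative unital haploid algebra object $A=T(\cW(p))$ in $\ind(\cO_{-p}^L)$ on which $PSL_2(\CC)$ acts by algebra automorphisms, with underlying object $\bigoplus_{n\geq 0}V(2n)\otimes\cL_{2n+1,1}^{-p}$.

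Next verify that $A$ satisfies the hypotheses of the (possibly infinite-order) vertex algebra extension theorems of \cite{HKL, CKM1, CMY}. Associativity, unitality, and haploidness are preserved by any tensor equivalence. Commutativity of $A$ with respect to the standard braiding of $\ind(\cO_{-p}^L)$ holds because $\cW(p)$ commutative implies $\mu_{\cW(p)}\circ\cR_{\cW(p),\cW(p)}^{-1}=\mu_{\cW(p)}$, and the braid-reversing tensorator of $T$ turns this identity into $\mu_A\circ\cR_{A,A}=\mu_A$. The key point is that $A$ still has trivial twist: the twist acts on $\cL_{r,1}^{-p}\in\cO_{-p}$ by $e^{2\pi i h_{r,1}(-p)}$, and $h_{2n+1,1}(-p)=-n(n+1)p-n\in\ZZ$, so this scalar is $1$ on every summand of $A$ (equivalently, by the remark after Theorem \ref{thm:mirror_equiv}, $T$ reverses twists and $\theta_{\cW(p)}=\Id$). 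Therefore $A$ is a conformal vertex algebra $\cW(-p)$ containing $V_{c(-p)}=\cL_{1,1}^{-p}$ as a vertex subalgebra, with $V_{c(-p)}$-module decomposition \eqref{eqn:W(-p)_decomp}; it is $\ZZ$-graded rather than $\NN$-graded precisely because the conformal weights $h_{2n+1,1}(-p)$ are negative for $n\geq 1$.

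The $PSL_2(\CC)$-action on the algebra object $A$ descends to an action by conformal vertex algebra automorphisms of $\cW(-p)$, and \eqref{eqn:W(-p)_decomp} gives its decomposition into $PSL_2(\CC)$-isotypic components; taking invariants yields $\cW(-p)^{PSL_2(\CC)}=\bigoplus_{n\geq 0}V(2n)^{PSL_2(\CC)}\otimes\cL_{2n+1,1}^{-p}=\cL_{1,1}^{-p}=V_{c(-p)}$, since $V(2n)$ is a nontrivial irreducible $PSL_2(\CC)$-module for $n\geq 1$. For simplicity, $T$ (as a monoidal equivalence) induces an equivalence between the category of $\cW(p)$-modules in $\ind(\cO_p^L)$ and the category of $A$-modules in $\ind(\cO_{-p}^L)$ taking $\cW(p)$, viewed as a module over itself, to $A$ viewed as a module over itself, so $A$ is a simple algebra object because $\cW(p)$ is; hence $\cW(-p)$ is a simple conformal vertex algebra. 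For uniqueness, any simple conformal vertex algebra with $V_{c(-p)}$-module decomposition \eqref{eqn:W(-p)_decomp} is, by \cite{HKL, CKM1, CMY}, a haploid commutative algebra object $B$ in $\ind(\cO_{-p}^L)$; applying $T^{-1}$ and the same twist computation, $T^{-1}(B)$ is a haploid commutative algebra object with trivial twist and underlying object $\bigoplus_{n\geq 0}V(2n)\otimes\cL_{2n+1,1}^p$ in $\ind(\cO_p^L)$, hence an $\NN$-graded simple vertex operator algebra extending $V_{c(p)}$; since $\cW(p)$ is, up to isomorphism, the unique such extension \cite{AdM, ALM, MY}, $T^{-1}(B)\cong\cW(p)$ and so $B\cong\cW(-p)$.

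The main obstacle I expect is two-fold. First, one must check that $A$ meets \emph{all} the hypotheses of the vertex algebra extension theorems of \cite{HKL, CKM1, CMY} in the non-$\NN$-graded, conformal-vertex-algebra setting --- in particular the grading and unitality conditions inside $\ind(\cO_{-p}^L)$, where $A$ has conformal weights unbounded below. Second, transporting simplicity and uniqueness relies on the (standard but nontrivial) facts that a monoidal equivalence induces an equivalence of algebra-module categories compatible with the regular modules, and that the triplet algebra $\cW(p)$ is, up to isomorphism, the unique simple vertex operator algebra extension of $V_{c(p)}$ with the stated decomposition; both are available from the cited literature but must be assembled with care. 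A minor additional point is verifying that $L(0)$ acts semisimply on each $\cL_{r,1}^{-p}$, so that the twist is genuinely the scalar $e^{2\pi i h_{r,1}(-p)}$.
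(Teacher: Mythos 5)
Your construction of $\cW(-p)=T(\cW(p))$ matches the paper's argument exactly: extend $\tau$ to the direct limit completions, transport the commutative algebra object, check that the triplet's $\NN$-grading (trivial twist) is sent to the integer grading making $A$ a genuine conformal vertex algebra, observe that $T$ sends $PSL_2(\CC)$-automorphisms to $PSL_2(\CC)$-automorphisms, and get simplicity from the equivalence of module categories over the algebra objects. All of this is what the paper does, with the same references \cite{HKL,CKM2,CMY,MY2}.

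Where you diverge is the uniqueness argument. You apply $T^{-1}$ to a hypothetical competitor $B$ and invoke uniqueness of the triplet algebra $\cW(p)$ as a simple vertex operator algebra extension of $V_{c(p)}$ with the given decomposition, citing \cite{AdM,ALM,MY}. The paper instead observes that the relevant symmetric tensor subcategory (generated by the $\cL_{2n+1,1}^{-p}$) is equivalent to the direct limit completion of $\rep PSL_2(\CC)$, which also embeds into $\ind(\cO_1)$, and reduces the uniqueness question to the uniqueness of a simple vertex operator algebra extension of $V_1$ with decomposition $\bigoplus_n V(2n)\otimes\cL_{2n+1,1}^1$ — and for the latter there is an explicit reference \cite[Theorem B.1]{MY2} (the $\mathfrak{sl}_2$-root-lattice vertex operator algebra). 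Your route and the paper's are equivalent in principle (both rest on the fact that the symmetric category is $\ind(\rep PSL_2(\CC))$, so a simple commutative algebra structure on $\bigoplus V(2n)\otimes \cL$ is unique), but your citation is the weaker point: I am not aware that the precise statement ``$\cW(p)$ is the \emph{unique} simple conformal vertex algebra with $V_{c(p)}$-module decomposition $\bigoplus V(2n)\otimes\cL_{2n+1,1}^p$'' appears in \cite{AdM} or \cite{ALM} (which construct and analyze $\cW(p)$ but do not address this kind of uniqueness), and in \cite{MY} I do not recall an explicit uniqueness theorem of this form. The most natural proof of that uniqueness statement is in fact the same reduction to $c=1$ that the paper performs, so the paper's route is both more direct and better supported by the literature. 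You should either locate a specific theorem establishing the uniqueness of $\cW(p)$ with its decomposition, or switch to the paper's reduction to $V_1$ and cite \cite[Theorem B.1]{MY2}.

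One additional small point: your remark that ``a monoidal equivalence induces an equivalence of algebra-module categories compatible with the regular modules'' is the right idea for transferring simplicity, and indeed this is what the paper uses (simple commutative algebra in $\ind(\cO_p^L)$ is sent to simple commutative algebra in $\ind(\cO_{-p}^L)$); this step is standard and fine.
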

\begin{proof}
The proof is essentially the same as that for the $p=1$ version in \cite[Theorem 7.1]{MY2}. Namely, we take $\cW(-p)=T(\cW(p))$, which is a simple commutative algebra in the braided tensor category $\ind(\cO_{-p}^L)$, and thus also a simple conformal vertex algebra extension of $V_{c(-p)}$ with the decomposition \eqref{eqn:W(-p)_decomp} as a $V_{c(-p)}$-module (see \cite[Theorem 3.2]{HKL}, \cite[Theorem 5.7]{CKM2}, and \cite[Theorem 7.5]{CMY}). Then for $g\in\mathrm{Aut}(\cW(p))=PSL_2(\CC)$, $T(g)$ is a commutative algebra automorphism of $\cW(-p)$ and thus also a conformal vertex algebra automorphism. So $PSL_2(\CC)$ acts on $\cW(-p)$, and the same proof as in \cite[Theorem 7.1]{MY2} shows that \eqref{eqn:W(-p)_decomp} gives the decomposition of $\cW(-p)$ as a $PSL_2(\CC)$-module.

To prove the uniqueness, first note that $\cW(-p)$ is a simple conformal vertex algebra in the semisimple symmetric tensor subcategory of $\ind(\cO_{-p}^L)$ with simple objects $\cL_{2n+1,1}^{-p}$, $n\in\NN$. This category is equivalent to the direct limit completion of $\rep PSL_2(\CC)$, which also embeds into the braided tensor category $\ind(\cO_1)$ of modules for the Virasoro algebra at central charge $1$ (see \cite[Example 4.12]{McR}). Thus the simple conformal vertex algebra structure on $\cW(-p)$ is unique if and only if the same holds for any simple vertex operator algebra extension of $V_1$ with decomposition $\bigoplus_{n=0}^\infty V(2n)\otimes\cL_{2n+1,1}^1$ as a $V_1$-module. Indeed, any such vertex operator algebra extension of $V_1$ is isomorphic to the $\mathfrak{sl}_2$-root lattice vertex operator algebra (see for example \cite[Theorem B.1]{MY2}), so $\cW(-p)$ is also unique up to isomorphism.
\end{proof}

Since $PSL_2(\CC)$ acts by automorphisms on $\cW(-p)$ with fixed-point subalgebra $V_{c(-p)}$, the $U(1)$-fixed-point subalgebra is another conformal vertex algebra extension of $V_{c(-p)}$. This conformal vertex algebra can also be constructed by applying the braid-reversed tensor equivalence $T$ to the singlet $W$-algebra $\cM(p)$ \cite{Ad}, which is the $U(1)$-fixed-point subalgebra of $\cW(p)$. That is, defining $\cM(-p)=T(\cM(p))$, we get:
\begin{propo}\label{prop:M(-p)}
For $p\in\ZZ_{\geq 2}$, there is a simple conformal vertex algebra $\cM(-p)$ of central charge $c(-p)$ such that as a $V_{c(-p)}$-module,
\begin{equation*}
\cM(-p)\cong\bigoplus_{n=0}^\infty \cL_{2n+1,1}^{-p}.
\end{equation*}
\end{propo}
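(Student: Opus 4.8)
The plan is to set $\cM(-p):=T(\cM(p))$, where $T:\ind(\cO_p^L)\rightarrow\ind(\cO_{-p}^L)$ is the braid-reversed tensor equivalence extending the equivalence $\tau$ of Theorem \ref{thm:main_thm_app}, and then to argue as in the proof of Theorem \ref{thm:W(-p)} (the $p=1$ case being \cite[Theorem 7.1]{MY2}). First I would recall that the singlet $W$-algebra $\cM(p)$ is the $U(1)$-fixed-point subalgebra of $\cW(p)$; since the zero weight space of the irreducible $PSL_2(\CC)$-module $V(2n)$ is one-dimensional, $\cM(p)$ decomposes as $\cM(p)\cong\bigoplus_{n=0}^\infty\cL_{2n+1,1}^p$ as a $V_{c(p)}$-module, with each summand occurring once. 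In particular $\cM(p)$ is an object of $\ind(\cO_p^L)$, and being a simple vertex operator algebra extension of $V_{c(p)}$ \cite{Ad}, it carries the structure of a haploid commutative associative algebra with trivial twist in the braided tensor category $\ind(\cO_p^L)$ by \cite[Theorem 3.2]{HKL}, \cite[Theorem 5.7]{CKM2}, and \cite[Theorem 7.5]{CMY}.

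Next I would push this algebra structure across $T$. Because $T$ is a braid-reversed tensor equivalence, $\cM(-p)=T(\cM(p))$ inherits a commutative associative algebra structure in $\ind(\cO_{-p}^L)$: the multiplication $T(\mu)\circ J_{\cM(p),\cM(p)}$, with $\mu$ the multiplication on $\cM(p)$, is associative by compatibility of the lax structure $J$ with the associativity isomorphisms, and it is commutative because a braid-reversed tensor functor carries commutative algebras to commutative algebras — the commutativity condition $\mu\circ\cR=\mu$ in $\ind(\cO_p^L)$ is equivalent to $\mu\circ\cR^{-1}=\mu$, which is exactly what is needed once the braiding is reversed. Moreover $\dim\hom_{\cO_{-p}}(V_{c(-p)},\cM(-p))=1$ since $\tau(\cL_{r,1}^p)\cong\cL_{r,1}^{-p}$ and only $\cL_{1,1}^{-p}=V_{c(-p)}$ occurs among the summands, and the twist on $\cM(-p)$ is still trivial (by the remark following Theorem \ref{thm:mirror_equiv}, $T$ reverses twists; alternatively, $h_{2n+1,1}(-p)=-n(n+1)p-n\in\ZZ$). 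Hence, by the extension theory of \cite{HKL, CKM2, CMY}, $\cM(-p)$ is a conformal vertex algebra extension of $V_{c(-p)}$ of central charge $c(-p)$, and additivity of $T$ together with $\tau(\cL_{2n+1,1}^p)\cong\cL_{2n+1,1}^{-p}$ gives $\cM(-p)\cong\bigoplus_{n=0}^\infty\cL_{2n+1,1}^{-p}$ as a $V_{c(-p)}$-module.

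It remains to establish simplicity. The singlet algebra $\cM(p)$ is simple \cite{Ad}, equivalently a simple algebra object in $\ind(\cO_p^L)$ under the correspondence between ideals of a conformal vertex algebra and algebra-object ideals (\cite{CKM2, CMY}); since $T$ is an equivalence of categories, it identifies the lattice of ideals of $\cM(p)$ with that of $\cM(-p)$, so $\cM(-p)$ is a simple algebra object, hence a simple conformal vertex algebra. I expect the only steps requiring genuine care — rather than routine invocation of the machinery — to be the determination of the $V_{c(p)}$-module structure of $\cM(p)$ (so that $\cM(p)$ genuinely lies in $\ind(\cO_p^L)$ with the claimed multiplicities, where the structure theory of \cite{Ad} enters) and the transfer of simplicity, which depends on the precise algebra-object/ideal dictionary in \cite{CKM2, CMY}. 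Unlike Theorem \ref{thm:W(-p)}, no uniqueness is asserted here, so there is no need to identify the ambient subcategory with the direct limit completion of $\rep PSL_2(\CC)$.
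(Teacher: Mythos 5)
Your proposal is correct and takes the same approach as the paper: the paper simply defines $\cM(-p)=T(\cM(p))$ and leaves the verification implicit, relying on the machinery of \cite{HKL, CKM2, CMY} already invoked for $\cW(-p)$ in Theorem \ref{thm:W(-p)}. You have merely spelled out the standard details — transfer of the haploid commutative algebra structure, transfer of simplicity via the ideal lattice, triviality of the twist — which the paper takes for granted.
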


Once the detailed tensor structure of $\cO_{-p}$ is better understood, it will be possible to study the representations of $\cW(-p)$ and $\cM(-p)$. We conjecture that the braid-reversed tensor equivalence $\tau:\cO_p^L\rightarrow\cO_{-p}^L$ of Theorem \ref{thm:main_thm_app} extends to a braid-reversed tensor equivalence from the subcategory $\cO_p^0\subseteq\cO_p$ defined in \cite[Section 5.1]{MY} to a suitable subcategory of $\cO_{-p}$. If so, then by \cite[Corollary A.4]{MY2}, $\cW(-p)$ would admit a tensor category of modules that would be braid-reversed tensor equivalent to the category of grading-restricted generalized $\cW(p)$-modules, and $\cM(-p)$ would admit a tensor category of modules that would be braid-reversed tensor equivalent to the subcategory $\cC_{\cM(p)}^0$ of $\cM(p)$-modules defined in \cite{CMY-sing}. By \cite{GN}, the category of grading-restricted generalized $\cW(p)$-modules is a non-semisimple modular tensor category equivalent to the module category for a quasi-Hopf modification of the small quantum group $u_q(\mathfrak{sl}_2)$, $q=e^{\pi i/p}$, and the braided tensor category $\cC_{\cM(p)}^0$ of $\cM(p)$-modules is equivalent to a module subcategory for the unrolled quantum group of $\mathfrak{sl}_2$ at $q=e^{\pi i/p}$.

\begin{rema}
In contrast with the singlet and triplet algebras $\cM(p)$ and $\cW(p)$ for $p\geq 2$, the algebras $\cM(-p)$ and $\cW(-p)$ do not seem to have obvious free field realizations. For example, while $\cM(p)$ is realized inside a rank-one Heisenberg Fock module with a modified Virasoro structure (that is, a Feigin-Fuchs module for the Virasoro algebra), the same cannot hold $\cM(-p)$, since Feigin-Fuchs modules for the Virasoro algebra at central charge $13+6p+6p^{-1}$ have finite length.
\end{rema}

\appendix

\section{Proofs for general coset theorems}\label{app:gen_coset_proofs}

In this appendix, we give proofs of Theorems \ref{thm:fund_coset} and \ref{thm:decomp_of_UV-mod}. Throughout, we use formal calculus as developed in \cite{FLM, LL}. Especially, we need the formal delta function $\delta(x)=\sum_{n\in\ZZ} x^n$ and its properties discussed in \cite[Chapter 2]{LL}. We also use the binomial expansion convention $(x+y)^h=\sum_{i\geq 0} \binom{h}{i} x^{h-i} y^i$ for $h\in\CC$, so that $(x+y)^h\neq (y+x)^h$ unless $h\in\NN$.

\subsection{Proof of Theorem \ref{thm:decomp_of_UV-mod}}

Since the second assertion of Theorem \ref{thm:fund_coset} is essentially a special case of Theorem \ref{thm:decomp_of_UV-mod}, we prove Theorem \ref{thm:decomp_of_UV-mod} first. Thus suppose $U$ and $V$ are vertex operator algebras and $X$ is a weak $U\otimes V$-module that is semisimple as a weak $U$-module. Let $\lbrace M_j\rbrace_{j\in J}$ denote the distinct irreducible grading-restricted $U$-modules that occur in $X$, so that as a weak $U$-module,
 \begin{equation*}
  X=\bigoplus_{j\in J} X_j
 \end{equation*}
where $X_j$ is the sum of all $U$-submodules of $X$ which are isomorphic to $M_j$. Then for all $j\in J$, we have the weak $U$-module homomorphism
\begin{align*}
 \varphi^{(j)}_X: M_j\otimes\hom_U(M_j,X) & \rightarrow X_j
\end{align*}
defined by $\varphi^{(j)}_X(m_j\otimes f)=f(m_j)$ for $m_j\in M_j$ and $f\in\hom_U(M_j,X)$. We claim that each $\varphi^{(j)}_X$ is actually an isomorphism, so that 
$$\varphi_X=\bigoplus_{j\in J} \varphi^{(j)}_X:\,\, \bigoplus_{j\in J} M_j\otimes\hom_U(M_j,X)\longrightarrow\bigoplus_{j\in J} X_j\quad( = X)$$ 
is an isomorphism of weak $U$-modules.

To prove the claim, note that since each $X_j$ is the sum of $U$-submodules $\til{M}$ isomorphic to $M_j$, and since each such $\til{M}$ is the image of some $U$-module homomorphism $f: M_j\rightarrow X$, each $\varphi^{(j)}_X$ is surjective. To show that $\varphi^{(j)}_X$ is injective, suppose 
 \begin{equation*}
  \varphi^{(j)}_X\left(\sum m^{(k)}_j\otimes f_k\right) = \sum f_k(m^{(k)}_j) =0
 \end{equation*}
for linearly independent $m^{(k)}_j\in M_j$ and $f_k\in\hom_U(M_j,X)$. Since each $f_k$ is a $U$-module homomorphism and thus preserves $L_U(0)$-eigenspaces, we may take each $m_j^{(k)}$ to be homogeneous of the same $L_U(0)$-weight $h$. Since $M_j$ is an irreducible $U$-module, $(M_j)_{[h]}$ is an irreducible module for the algebra of degree-preserving operators in the subalgebra of $\Endo_\CC(M_j)$ generated by the $u_n$ for $u\in U$, $n\in\ZZ$. Then the Jacobson Density Theorem implies there exist degree-preserving operators $a_k$ such that
\begin{equation*}
 a_k\cdot m^{(l)}_j =\delta_{k,l} m^{(l)}_j.
\end{equation*}
Then for each $k$,
\begin{equation*}
 f_k(m^{(k)}_j) = \sum_l f_l(\delta_{k,l} m^{(l)}_j) =\sum_l f_l(a_k\cdot m^{(l)}_j) = a_k\cdot\sum_l f_l(m^{(l)}_j) =0.
\end{equation*}
Thus $\ker f_k\neq 0$ for each $k$. Since $M_j$ is irreducible, each $f_k=0$, and so $\varphi^{(j)}_X$ is injective.

Now we need to show that each $W_j=\hom_U(M_j,X)$ is a weak $V$-module and that $\varphi_X$ is a homomorphism of weak $U\otimes V$-modules. First we define the vertex operator
\begin{equation*}
 Y_{W_j}: V\otimes W_j\rightarrow W_j[[x,x^{-1}]]
\end{equation*}
by
\begin{equation*}
 [Y_{W_j}(v,x)f](m_j) = Y_X(\vac_U\otimes v,x)f(m_j)
\end{equation*}
for $v\in V$, $f\in W_j$, and $m_j\in M_j$. To see that indeed $Y_{W_j}(v,x)f\in W_j[[x,x^{-1}]]$, we need to check that each coefficient of $Y_{W_j}(v,x)f$ is a $U$-module homomorphism from $M_j$ to $X$:
\begin{align*}
 [Y_{W_j}(v,x)f](Y_{M_j}(u,y)m_j) & = Y_X(\vac_U\otimes v,x)f(Y_{M_j}(u,y)m_j)\nonumber\\
 & = Y_X(\vac_U\otimes v,x)Y_X(u\otimes\vac_V,y)f(m_j)\nonumber\\
 & = Y_X(u\otimes\vac_V,y)Y_X(\vac_U\otimes v,x)f(m_j)\nonumber\\
 & = Y_X(u\otimes\vac_V,y)[Y_{W_j}(v,x)f](m_j)
\end{align*}
for any $u\in U$, $m_j\in M_j$. For the third equality, the relation
\begin{equation}\label{eqn:U_V_commute}
 [Y_X(\vac_U\otimes v,x), Y_X(u\otimes\vac_V,y)]=0
\end{equation}
for $u\in U$, $v\in V$ follows from the commutator formula for the vertex operator $Y_X$ acting on the weak $U\otimes V$-module $X$.

Before showing that $(W_j,Y_{W_j})$ is a weak $V$-module, we show that for any $m_j\in M_j$,
 \begin{align*}
  \varphi_{m_j}=\varphi^{(j)}_X(m_j\otimes\cdot): W_j & \rightarrow X,
 \end{align*}
 intertwines the vertex operator actions of $V$ on $W_j$ and $X$:
 \begin{align}\label{eqn:psi_ui_V_hom}
 \varphi_{m_j}\left(Y_{W_j}(v,x)f\right) & = \left[Y_{W_j}(v,x)f\right](m_j)\nonumber\\
 &= Y_X(\vac_U\otimes v,x)f(m_j) = Y_X(\vac_U\otimes v,x)\varphi_{m_j}(f)
\end{align}
for any $v\in V$, $f\in W_j$. Using this relation, we show that the vertex operator $Y_{W_j}$ is lower truncated, that is, $Y_{W_j}(v,x)f\in W_j((x))$ for any $v\in V$, $f\in W_j$: First, for any $m_j\in M_j$,
\begin{equation*}
 \varphi_{m_j}\left(Y_{W_j}(v,x)f\right) = Y_X(\vac_U\otimes v,x)\varphi_{m_j}(f)\in X((x))
\end{equation*}
by the lower truncation of $Y_X$, that is, $\varphi_{m_j}(v_n f)=0$ for $n\in\ZZ$ sufficiently positive. As $\varphi^{(j)}_X$ is injective, $\varphi_{m_j}$ is also injective if $m_j\neq 0$. So $v_n f=0$ for $n\in\ZZ$ sufficiently positive, that is, $Y_{W_j}(v,x)f\in W_j((x))$.

\allowdisplaybreaks

The remaining properties of a  weak $V$-module that we need to prove for $W_j$ are the vacuum property, the Jacobi identity, and the $L_V(-1)$-derivative property. The vacuum property follows from the vacuum property for $Y_X$:
\begin{equation*}
 \left[Y_{W_j}(\vac,x)f\right](m_j) =Y_X(\vac,x)f(m_j)=f(m_j) =\left[\Id_{W_j}(f)\right](m_j)
\end{equation*}
 for $f\in W_j$, $m_j\in M_j$. To prove the Jacobi identity for $W_j$, we use the Jacobi identity for $X$:
\begin{align*}
& x_0^{-1}\delta\left(\frac{x_1-x_2}{x_0}\right) \left[ Y_{W_j}(v_1,x_1)Y_{W_j}(v_2,x_2)f\right](m_j)\nonumber\\
&\hspace{8em}- x_0^{-1}\delta\left(\frac{-x_2+x_1}{x_0}\right) \left[Y_{W_j}(v_2,x_2)Y_{W_j}(v_1,x_1)f\right](m_j)\nonumber\\
&\hspace{2em} = x_0^{-1}\delta\left(\frac{x_1-x_2}{x_0}\right) Y_X(\vac_U\otimes v_1,x_1)\left[Y_{W_j}(v_2,x_2)f\right](m_j)\nonumber\\
&\hspace{8em}-x_0^{-1}\delta\left(\frac{-x_2+x_1}{x_0}\right) Y_X(\vac_U\otimes v_2,x_2)\left[Y_{W_j}(v_2,x_2)f\right](m_j)\nonumber\\
&\hspace{2em} =x_0^{-1}\delta\left(\frac{x_1-x_2}{x_0}\right) Y_X(\vac_U\otimes v_1,x_1)Y_X(\vac_U\otimes v_2,x_2)f(m_j)\nonumber\\
&\hspace{8em} -x_0^{-1}\delta\left(\frac{-x_2+x_1}{x_0}\right) Y_X(\vac_U\otimes v_2,x_2)Y_X(\vac_U\otimes v_1,x_1)f(m_j)\nonumber\\
&\hspace{2em} = x_2^{-1}\delta\left(\frac{x_1-x_0}{x_2}\right) Y_X(Y_{U\otimes V}(\vac_U\otimes v_1,x_0)(\vac_U\otimes v_2), x_2)f(m_j)\nonumber\\
&\hspace{2em} = x_2^{-1}\delta\left(\frac{x_1-x_0}{x_2}\right) Y_X(\vac_U\otimes Y_V(v_1,x_0)v_2, x_2)f(m_j)\nonumber\\
& \hspace{2em} = x_2^{-1}\delta\left(\frac{x_1-x_0}{x_2}\right) \left[Y_{W_j}(Y_V(v_1,x_0)v_2,x_2)f\right](m_j)
\end{align*}
for any $v_1, v_2\in V$, $f\in W_j$, $m_j\in M_j$. For the $L_V(-1)$-derivative property,
\begin{align*}
 \left[ \frac{d}{dx} Y_{W_j}(v,x)f\right](m_j) & = \frac{d}{dx} Y_X(\vac_U\otimes v,x)f(m_j) = Y_X(L(-1)(\vac_U\otimes v),x)f(m_j)\nonumber\\
 &= Y(\vac_U\otimes L_V(-1)v,x)f(m_j) =\left[Y_{W_j}(L_V(-1)v,x)f\right](m_j)
\end{align*}
for $v\in V$, $f\in W_j$, and $m_j\in M_j$, where the third equality uses $L(-1)=L_U(-1)\otimes\Id_V+\Id_U\otimes L_V(-1)$.

Now that each $W_j$ is a weak $V$-module, we verify that $\varphi^{(j)}_X$ is a homomorphism of weak $U\otimes V$-modules:
\begin{align*}
 \varphi^{(j)}_X  \left(Y_{M_j\otimes W_j}(u\otimes v, x)(m_j\otimes f)\right) & =\varphi^{(j)}_X\left(Y_{M_j}(u,x)m_j\otimes Y_{W_j}(v,x)f\right)\nonumber\\
 &\hspace{-7em} =[Y_{W_j}(v,x)f](Y_{M_j}(u,x)m_j) =Y_{X}(\vac_U\otimes v,x)f\left(Y_{M_j}(u,x)m_j\right)\nonumber\\
 &\hspace{-7em} = Y_X(\vac_U\otimes v,x)Y_X(u\otimes\vac_V,x)f(m_j) = Y_X(u\otimes v, x) \varphi^{(j)}_X(m_j\otimes f).
\end{align*}
for $u\in U$, $v\in V$, $m_j\in M_j$, $f\in W_j$. The last equality comes from the Jacobi identity for $X$:
\begin{align}\label{eqn:Jacobi_calc}
 Y_X(u\otimes v,x) & = Y_X((\vac_U\otimes v)_{-1}(u\otimes\vac_V), x)\nonumber\\
 & =\mathrm{Res}_y\,y^{-1} Y_X(Y_{U\otimes V}(\vac_U\otimes v,y)(u\otimes\vac_V),x)\nonumber\\
 & =\mathrm{Res}_y\,\mathrm{Res}_z\,y^{-1} z^{-1}\delta\left(\frac{x+y}{z}\right) Y_X(Y_{U\otimes V}(\vac_U\otimes v,y)(u\otimes\vac_V),x)\nonumber\\
 & =\mathrm{Res}_y\,\mathrm{Res}_z\, y^{-2}\delta\left(\frac{z-x}{y}\right) Y_X(\vac_U\otimes v, z)Y_X(u\otimes\vac_V,x)\nonumber\\
 & \hspace{4em} -\mathrm{Res}_y\,\mathrm{Res}_z\, y^{-2}\delta\left(\frac{-x+z}{y}\right) Y_X(u\otimes \vac_V, x)Y_X(\vac_U\otimes v,z)\nonumber\\
 & =\mathrm{Res}_z\,\left((z-x)^{-1}+(x-z)^{-1}\right) Y_X(\vac_U\otimes v, z)Y_X(u\otimes\vac_V,x)\nonumber\\
 & =\mathrm{Res}_z\,z^{-1}\delta\left(\frac{x}{z}\right) Y_X(\vac_U\otimes v, x)Y_X(u\otimes\vac_V,x)\nonumber\\
 & = Y_X(\vac_U\otimes v, x)Y_X(u\otimes\vac_V,x).
\end{align}
for $u\in U$ and $v\in V$, where we have used \eqref{eqn:U_V_commute} for the fifth equality and delta function substitution properties for the sixth.

We have now finished the proof that there is a weak $U\otimes V$-module isomorphism
\begin{equation*}
 \varphi_X: \bigoplus_{j\in J} M_j\otimes W_j\longrightarrow X
\end{equation*}
with $W_j=\hom_U(M_j,X)$. To complete the proof of Theorem \ref{thm:decomp_of_UV-mod}, we need to show that each $W_j$ is a (grading-restricted) generalized $V$-module if $X$ is a (grading-restricted) generalized $U\otimes V$-module. First suppose that $X$ is a generalized $U\otimes V$-module, that is, $X$ is the direct sum of its generalized $L(0)$-eigenspaces. Since $X$ is also a semisimple weak $U$-module, it is also the direct sum of its $L_U(0)$-eigenspaces. Each $L_U(0)$-eigenspace $X_{[h_U]}$ with $L_U(0)$-eigenvalue $h_U\in\CC$ is $L(0)$-invariant since $L(0)=L_U(0)+L_V(0)$ commutes with $L_U(0)$. Thus each $X_{[h_U]}$ is the direct sum of generalized eigenspaces for 
$$L_V(0)\vert_{X_{[h_U]}}=L(0)\vert_{X_{[h_U]}}-h_U$$
(see for example \cite[Remark 2.13]{HLZ1}), showing that $X$ is the direct sum of generalized $L_V(0)$-eigenspaces. Now since each $W_j$ embeds into $X$ as a weak $V$-submodule which is in particular an $L_V(0)$-invariant subspace of $X$, it follows that each $W_j$ is the direct sum of its generalized $L_V(0)$-eigenspaces, that is, $W_j$ is a generalized $V$-module.

Finally, we need to show that if the generalized $L(0)$-eigenvalue grading of $X$ satisfies the grading-restriction conditions, then so does the generalized $L_V(0)$-eigenvalue grading of $W_j$ for each $j\in J$. First, because $M_j$ is irreducible, the eigenvalues of $L_U(0)$ on $M_j$ are contained in $h_j+\NN$ for some $h_j\in\CC$. Take any non-zero $m_j\in (M_j)_{[h_j]}$ and recall the $V$-module injection $\varphi_{m_j}=\varphi_X^{(j)}(m_j\otimes\cdot): W_j\rightarrow X$. Now suppose that $h\in\CC$ is a generalized eigenvalue of $L_V(0)$ on $W_j$ and $f\in (W_j)_{[h]}$, so that $(L_V(0)-h)^N\cdot f=0$ for some $N\in\ZZ_+$. We calculate
\begin{align*}
 (L(0)-h_j-h)^N\cdot\varphi_{m_j}(f) & = \sum_{n=0}^N \binom{N}{n} (L_U(0)-h_j)^{N-n} (L_V(0)-h)^n\cdot f(m_j)\nonumber\\
 & = \sum_{n=0}^N \binom{N}{n} (L_V(0)-h)^n\cdot f\left((L_U(0)-h_j)^{N-n}\cdot m_j\right)\nonumber\\
 & =(L_V(0)-h)^N\cdot f(m_j)\nonumber\\
 & =\left[(L_V(0)-h)^N\cdot f\right](m_j) = 0,
\end{align*}
showing that $\varphi_{m_j}$ maps $(W_j)_{[h]}$ injectively into $X_{[h_j+h]}$. Since $X_{[h_j+h]}$ is finite dimensional, so is $(W_j)_{[h]}$. Moreover, since $X_{[h_j+h+n]}=0$ for $n\in\ZZ$ sufficiently negative, so is $(W_j)_{[h+n]}$. This completes the proof of Theorem \ref{thm:decomp_of_UV-mod}.

\subsection{Proof of Theorem \ref{thm:fund_coset}}
We begin by recalling the setting of Theorem \ref{thm:fund_coset}:
\begin{itemize}
 \item $(A, Y,\vac,\omega)$ is a simple positive-energy vertex operator algebra.
 
 \item $(U, Y_U =Y\vert_{U\otimes U},\vac, \omega_U)$ is a simple vertex operator subalgebra of $A$ with conformal vector $\omega_U\in A_{(2)}\cap U$ such that $L(1)\omega_U=0$.
 
 \item As a weak $U$-module, $A$ is the direct sum of irreducible grading-restricted $U$-modules.
 
 \item The vertex subalgebra $U$ equals its own double commutant, that is, if $V=C_A(U)$, then $U=C_A(V)$.
\end{itemize}
The coset $V=C_A(U)$ is a vertex operator algebra with conformal vector $\omega_V=\omega-\omega_U$, and both $U$ and $V$ are positive-energy vertex operator algebras.

Our first goal is to prove that the linear map $\psi: U\otimes V\rightarrow A$ defined by $\psi(u\otimes v)=u_{-1} v$ for $u\in U$, $v\in V$ is a vertex operator algebra homomorphism. We begin with a lemma whose proof amounts to the same calculation as \eqref{eqn:Jacobi_calc}:
\begin{lemma}\label{lem:UV_hom_into_A}
 For $u\in U$ and $v\in V$, $Y(u_{-1} v, x) = Y(u,x)Y(v,x)$.
\end{lemma}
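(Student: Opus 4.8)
The plan is to prove Lemma \ref{lem:UV_hom_into_A} by essentially repeating the Jacobi-identity manipulation already carried out in \eqref{eqn:Jacobi_calc}, but now in the vertex operator algebra $A$ itself rather than in a weak $U\otimes V$-module. Recall that $v\in V=C_A(U)$ means precisely, by \eqref{eqn:comm_char}, that $Y(u,x)v\in A[[x]]$ for all $u\in U$; equivalently $[Y(u,x_1),Y(v,x_2)]=0$. This commutativity will play the role that \eqref{eqn:U_V_commute} played in the module computation.

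First I would write $u_{-1}v=\mathrm{Res}_y\,y^{-1}Y(u,y)v$ and then expand $Y(u_{-1}v,x)$ using iterate/Jacobi for the vertex operator algebra $A$:
\begin{align*}
 Y(u_{-1}v,x) &=\mathrm{Res}_y\,y^{-1}Y(Y(u,y)v,x)\\
 &=\mathrm{Res}_y\,\mathrm{Res}_z\,y^{-1}z^{-1}\delta\!\left(\frac{x+y}{z}\right)Y(Y(u,y)v,x).
\end{align*}
Applying the Jacobi identity for $A$ to $Y(Y(u,y)v,x)$ rewrites the right-hand side as a difference of two products $Y(u,z)Y(v,x)$ and $Y(v,x)Y(u,z)$ against appropriate delta functions in $y^{-2}\delta(\tfrac{z-x}{y})$ and $y^{-2}\delta(\tfrac{-x+z}{y})$. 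Taking $\mathrm{Res}_y$ converts these into $(z-x)^{-1}$ and $(x-z)^{-1}$; since $Y(u,z)$ and $Y(v,x)$ commute, the two terms combine into $\mathrm{Res}_z\,z^{-1}\delta(\tfrac{x}{z})Y(u,z)Y(v,x)=Y(u,x)Y(v,x)$, exactly as in the chain of equalities in \eqref{eqn:Jacobi_calc}. The only input beyond formal-calculus delta-function manipulations is the commutation $[Y(u,z),Y(v,x)]=0$, which holds because $v\in C_A(U)$.

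I do not expect a genuine obstacle here: the computation is word-for-word the same as \eqref{eqn:Jacobi_calc} with $Y_X$ replaced by $Y$ and $u\otimes\vac_V$, $\vac_U\otimes v$ replaced by $u$, $v$. The only point requiring a line of care is justifying that the intermediate expressions are well-defined and that the residue/delta-function substitutions are legitimate, i.e.\ that one may freely move $\mathrm{Res}_y$ past $\mathrm{Res}_z$ and use the substitution property $z^{-1}\delta(\tfrac{x+y}{z})f(z)=z^{-1}\delta(\tfrac{x+y}{z})f(x+y)$; these are standard and are covered by the formal calculus of \cite[Chapter 2]{LL}. So the proof is just a transcription of the earlier calculation, with a remark that the role of \eqref{eqn:U_V_commute} is now played by the defining property of the commutant.
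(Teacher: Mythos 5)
Your proposal is correct and is exactly the argument the paper intends: the paper's entire proof of Lemma \ref{lem:UV_hom_into_A} is the remark that ``the proof amounts to the same calculation as \eqref{eqn:Jacobi_calc},'' and you have simply carried out that transcription, replacing $Y_X$ with $Y$, the vectors $\vac_U\otimes v$ and $u\otimes\vac_V$ with $u$ and $v$, and the input \eqref{eqn:U_V_commute} with the defining commutant property $[Y(u,x_1),Y(v,x_2)]=0$ for $v\in C_A(U)$. The delta-function bookkeeping (the $y^{-2}\delta$ terms after applying the Jacobi identity, the residues $(z-x)^{-1}$ and $(x-z)^{-1}$, and their recombination into $z^{-1}\delta(x/z)$) matches the paper line by line.
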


The product $Y(u,x)Y(v,x)$ in this lemma is well defined only because the vertex operators for $u$ and $v$ commute. Now we can prove that $\psi$ is a vertex operator homomorphism: it preserves vacuum and conformal vectors because  $\vac_{-1}\vac=\vac$ and
\begin{equation*}
 (\omega_U)_{-1}\vac +\vac_{-1}\omega_V =\omega_U+\omega_V=\omega.
\end{equation*}
Then for $u_1,u_2\in U$ and $v_1,v_2\in V$, we have
\begin{align}\label{eqn:Y_psi_psi}
 Y(\psi(u_1\otimes v_1),x)\psi(u_2\otimes v_2) & = Y((u_1)_{-1} v_1,x)(u_2)_{-1} v_2\nonumber\\
 & = Y(u_1,x)Y(v_1,x)(u_2)_{-1} v_2\nonumber\\
 & = Y(u_1,x)(u_2)_{-1}Y(v_1,x)v_2
\end{align}
using Lemma \ref{lem:UV_hom_into_A}. On the other hand,
\begin{align}\label{eqn:psi_Y}
 \psi (Y_{U\otimes V}(u_1\otimes v_1,x) & (u_2\otimes v_2)) = \psi\left( Y_U(u_1,x)u_2\otimes Y_V(v_1,x)v_2\right)\nonumber\\
 & = \left(Y(u_1,x)u_2\right)_{-1} Y(v_1,x)v_2\nonumber\\
 & = \mathrm{Res}_y\,y^{-1}Y(Y(u_1,x)u_2,y)Y(v_1,x)v_2\nonumber\\
 & = \mathrm{Res}_y\,\mathrm{Res}_z\,y^{-1} z^{-1}\delta\left(\frac{y+x}{z}\right)Y(Y(u_1,x)u_2,y)Y(v_1,x)v_2\nonumber\\
 & = \mathrm{Res}_y\,\mathrm{Res}_z\,y^{-1}\left( x^{-1}\delta\left(\frac{z-y}{x}\right)Y(u_1,z)Y(u_2,y)\right.\nonumber\\
 &\hspace{10em}\left.-x^{-1}\delta\left(\frac{y-z}{-x}\right)Y(u_2,y)Y(u_1,z)\right)Y(v_1,x)v_2\nonumber\\
 & =\mathrm{Res}_y\,\mathrm{Res}_z\,\left(y^{-1} z^{-1}\delta\left(\frac{x+y}{z}\right)Y(u_1,z)Y(u_2,y)Y(v_1,x)v_2\right.\nonumber\\ &\hspace{10em}\left.+y^{-2}\delta\left(\frac{-x+z}{y}\right)Y(u_2,y)Y(u_1,z)Y(v_1,x)v_2\right)\nonumber\\
 & =\mathrm{Res}_y\,y^{-1}Y(u_1,x+y)Y(u_2,y)Y(v_1,x)v_2\nonumber\\
 &\hspace{5em}+\mathrm{Res}_z\,(-x+z)^{-1}Y(u_2,-x+z)Y(u_1,z)Y(v_1,x)v_2.
\end{align}
It is important to observe that, because vertex operators for vectors in $U$ commute with vertex operators for vectors in $V$, all expressions and delta function substitutions here are well defined. Now, by the second equality in \eqref{eqn:comm_char}, the second series in \eqref{eqn:psi_Y} involves only non-negative powers of $z$, and hence $\mathrm{Res}_z$ vanishes. For the same reason,
\begin{align*}
 \mathrm{Res}_y\,y^{-1} Y(u_1,x+y)Y(u_2,y)Y(v_1,x)v_2 &=\lim_{y\to 0} Y(u_1,x+y)Y(u_2,y)Y(v_1,x)v_2\nonumber\\
 &=Y(u_1,x)(u_2)_{-1}Y(v_1,x)v_2,
\end{align*}
which agrees with \eqref{eqn:Y_psi_psi}. This shows that $\psi$ is a homomorphism of vertex algebras.

We have not yet shown that $\psi$ is injective, but it at least gives $A$ the structure of a (grading-restricted) $U\otimes V$-module with vertex operator 
\begin{align*}
 Y_A: (U\otimes V)\otimes A & \rightarrow A((x))\nonumber\\
 (u\otimes v)\otimes a & \mapsto Y(\psi(u\otimes v),x)a =Y(u_{-1} v,x)a =Y(u,x)Y(v,x)a
\end{align*}
(recall Lemma \ref{lem:UV_hom_into_A}). Thus Theorem \ref{thm:decomp_of_UV-mod} gives a decomposition of $A$ as a $U\otimes V$-module: Let $\lbrace U_i\rbrace_{i\in I}$ denote the set of distinct irreducible (grading-restricted) $U$-modules that occur in $A$. Since $U$ is simple and occurs in $A$, we fix an index $i=0$ such that $U_0=U$. For each $i\in I$, $V_i=\hom_U(U_i,A)$ is a grading-restricted generalized $V$-module by Theorem \ref{thm:decomp_of_UV-mod}, and there is a $U\otimes V$-module isomorphism
\begin{equation*}
 \varphi: \bigoplus_{i\in I} U_i\otimes V_i \longrightarrow A
\end{equation*}
defined by
\begin{equation*}
 \varphi(u_i\otimes f) =f(u_i)
\end{equation*}
for $u_i\in U_i$ and $f\in V_i$. In fact, each $V_i$ is an ordinary $V$-module in the sense that $L_V(0)$ acts semisimply on $V_i$; this is because $V_i$ occurs as a $V$-submodule of $A$, because $L_V(0)=L(0)-L_U(0)$ on $A$, and because the commuting operators $L(0)$ and $L_U(0)$ both act semisimply on $A$. This proves the second assertion of Theorem \ref{thm:fund_coset}.

We would like to show that the vertex operator algebra homomorphism $\psi: U\otimes V\rightarrow A$ is injective next. But first, we need to show $V_0\cong V$ as $V$-modules:
\begin{propo}
 The image in $A$ of the injection $\varphi_\vac=\varphi(\vac\otimes\cdot): V_0\rightarrow A$ is $V$. In particular, $\varphi_\vac$ determines a $V$-module isomorphism from $V_0$ to $V$.
\end{propo}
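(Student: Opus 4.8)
The plan is to identify $V_0=\hom_U(U,A)$ with the space of $U$-vacuum-like vectors of $A$ via evaluation at the vacuum, and then to recognize that space as $C_A(U)=V$ by means of the characterization \eqref{eqn:comm_char}.

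First I would note that, viewed as a module over itself, the vertex operator algebra $U$ is generated by $\vac$, since $u=u_{-1}\vac$ for every $u\in U$; hence any $U$-module homomorphism $f\colon U\to A$ is determined by the single vector $f(\vac)\in A$. By Corollary 4.7.6 and Proposition 4.7.7 of \cite{LL}, the assignment $f\mapsto f(\vac)$ is a linear isomorphism from $\hom_U(U,A)=V_0$ onto the subspace of vacuum-like vectors for $U$ in $A$, that is, onto $\{v\in A : L_U(-1)v=0\}$, where $A$ is regarded as a $U$-module via $Y\vert_{U\otimes A}$, for which $L_U(-1)$ is precisely the translation operator. By the second equality in \eqref{eqn:comm_char}, this subspace equals $C_A(U)=V$. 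Since $\varphi_\vac(f)=\varphi(\vac\otimes f)=f(\vac)$ by the definition of $\varphi$, it follows that $\im\varphi_\vac=V$.

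It then remains to observe that $\varphi_\vac\colon V_0\to V$ is a homomorphism of $V$-modules. This is the special case $m_j=\vac\in U_0=U$ of \eqref{eqn:psi_ui_V_hom}, combined with the fact that $Y_A(\vac_U\otimes v,x)$ acts on $\varphi_\vac(f)\in V\subseteq A$ as $Y_V(v,x)$, because $\psi(\vac_U\otimes v)=\vac_{-1}v=v$. Being an injective $V$-module homomorphism whose image is all of $V$, the map $\varphi_\vac$ is a $V$-module isomorphism from $V_0$ onto $V$, as claimed.

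I do not anticipate a genuine obstacle here: the argument consists of correctly invoking the vacuum-like-vector correspondence of \cite{LL} and matching it with \eqref{eqn:comm_char}. The one point demanding a moment's attention is verifying that the translation operator governing vacuum-like vectors of $A$ as a $U$-module is $L_U(-1)$ and not some other operator, so that the correspondence lands exactly in $C_A(U)$; this holds because the $U$-module structure on $A$ is the restriction of $Y$ and $\omega_U$ is the conformal vector of $U$.
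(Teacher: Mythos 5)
Your proof is correct, and it takes a slightly different route from the paper's. The paper establishes $\im\varphi_\vac=V$ by two direct inclusion arguments: for $\im\varphi_\vac\subseteq V$ it uses the \emph{first} characterization in \eqref{eqn:comm_char} (that $Y(u,x)f(\vac)=f(Y(u,x)\vac)\in A[[x]]$, so $f(\vac)$ lies in $C_A(U)$), and for $V\subseteq\im\varphi_\vac$ it explicitly constructs, for each $v\in V$, the $U$-module homomorphism $f_v\colon u\mapsto v_{-1}u$ and observes $f_v(\vac)=v$. You instead reduce both directions at once to the vacuum-like-vector correspondence of Corollary 4.7.6 and Proposition 4.7.7 of \cite{LL}, obtaining the linear isomorphism $\hom_U(U,A)\cong\ker L_U(-1)$ by evaluation at $\vac$, and then match this with the \emph{second} characterization in \eqref{eqn:comm_char}, namely $C_A(U)=\ker L_U(-1)$. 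This is the same tool the paper itself applies later (in Subsection \ref{subsec:inv_functor}, where it identifies $X^U=\ker L_U(-1)$ with $\hom_U(U,X)$), so the citation is sound and the argument lands correctly. Your approach buys brevity and conceptual uniformity with the rest of the paper's invariants machinery; the paper's approach is more self-contained and exhibits the inverse map $v\mapsto f_v$ explicitly. Both proofs conclude the $V$-module-homomorphism claim identically, by specializing \eqref{eqn:psi_ui_V_hom} to $m_j=\vac$, together with the observation (which you make explicitly and the paper leaves implicit) that $Y_A(\vac_U\otimes v,x)$ restricts to $Y_V(v,x)$ on $V\subseteq A$.
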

\begin{proof}
 By definition, $\varphi_\vac(f)=f(\vac)$ for any $f\in V_0=\hom_U(U,A)$. Then for any $u\in U$,
 \begin{equation*}
  Y(u,x)f(\vac)=f(Y(u,x)\vac)\in A[[x]],
 \end{equation*}
showing that $f(\vac)\in C_U(A)=V$ by the first equality of \eqref{eqn:comm_char}.

Conversely, if $v\in V$, we define $f_v: U\rightarrow A$ by $f_v(u)=v_{-1} u$ for $u\in U$. The linear map $f_v$ is a $U$-module homomorphism because
\begin{equation*}
 f_v(Y_U(u_1,x)u_2) = v_{-1}Y(u_1,x)u_2=Y(u_1,x)v_{-1} u_2=Y(u_1,x)f_v(u_2)
\end{equation*}
for any $u_1,u_2\in U$. Then
\begin{equation*}
 \varphi_\vac(f_v)=f_v(\vac)=v_{-1}\vac=v,
\end{equation*}
showing that $v\in\im \varphi_\vac$.

We have shown that $\im\varphi_\vac=V$, so $\varphi_\vac$ defines a linear isomorphism from $V_0$ to $V$. Then \eqref{eqn:psi_ui_V_hom} in the case $X=A$, $m_j=\vac$, $W_j=V_0$ shows $\varphi_\vac$ is also a $V$-module homomorphism.
\end{proof}

Now we show $\psi$ is injective, completing the proof of the first assertion of Theorem \ref{thm:fund_coset}:
\begin{propo}\label{prop:psi_diag}
 The diagram of $U\otimes V$-module homomorphisms
 \begin{equation*}
  \xymatrixcolsep{4pc}
  \xymatrix{
  U_0\otimes V_0 \ar[r]^{\varphi} \ar[d]_{\Id_U\otimes\varphi_\vac} & A \\
  U\otimes V \ar[ru]_{\psi} & \\
  }
 \end{equation*}
commutes. In particular, the vertex operator algebra homomorphism $\psi$ is injective.
\end{propo}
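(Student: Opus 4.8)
The plan is to reduce commutativity of the diagram to a single identity about $U$-module homomorphisms $f: U\to A$. Since all three maps $\varphi$, $\psi$, and $\Id_U\otimes\varphi_\vac$ are $U\otimes V$-module homomorphisms (as established above), it suffices to compare the two composites on a simple tensor $u\otimes f$ with $u\in U_0=U$ and $f\in V_0=\hom_U(U,A)$. Applying $\varphi$ directly gives $\varphi(u\otimes f)=f(u)$, while $\psi\circ(\Id_U\otimes\varphi_\vac)$ gives $\psi\big(u\otimes\varphi_\vac(f)\big)=\psi\big(u\otimes f(\vac)\big)=u_{-1}f(\vac)$. Thus the diagram commutes if and only if $f(u)=u_{-1}f(\vac)$ for every $f\in\hom_U(U,A)$.

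To establish this identity, I would apply $f$ to the creation property, namely $Y_U(u,x)\vac\in U[[x]]$ with $Y_U(u,0)\vac=u_{-1}\vac=u$, and use that $f$ intertwines the $U$-actions on $U$ and $A$ to obtain $f\big(Y_U(u,x)\vac\big)=Y(u,x)f(\vac)$. Evaluating at $x=0$, the left-hand side yields $f(u)$. The right-hand side can be evaluated at $x=0$ precisely because $f(\vac)\in V=C_A(U)$, so that $Y(u,x)f(\vac)\in A[[x]]$ by the commutant characterization \eqref{eqn:comm_char}; its value at $x=0$ is $u_{-1}f(\vac)$. Hence $f(u)=u_{-1}f(\vac)$, and the diagram commutes.

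For the injectivity of $\psi$, recall that $\varphi:\bigoplus_{i\in I}U_i\otimes V_i\to A$ is an isomorphism, so in particular its restriction $\varphi:U_0\otimes V_0\to A$ is injective, and that $\varphi_\vac:V_0\to V$ is an isomorphism by the preceding proposition. Therefore $\Id_U\otimes\varphi_\vac:U\otimes V_0\to U\otimes V$ is an isomorphism, and the commutativity just proved gives $\psi=\varphi\circ(\Id_U\otimes\varphi_\vac)^{-1}$ on all of $U\otimes V$, a composition of injective linear maps; hence $\psi$ is injective. There is no substantial obstacle in this argument: the only point requiring care is the passage to $x=0$ on the right-hand side of $f\big(Y_U(u,x)\vac\big)=Y(u,x)f(\vac)$, which is exactly where the defining property of the commutant $V$ enters.
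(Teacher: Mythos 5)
Your proposal is correct and takes essentially the same route as the paper: both reduce to showing $f(u)=u_{-1}f(\vac)$ for $f\in\hom_U(U,A)$ via the $U$-module-homomorphism property of $f$, then deduce injectivity of $\psi$ from the invertibility of $\varphi$ and $\varphi_\vac$. The only difference is cosmetic: the paper simply equates $f(u_{-1}\vac)=u_{-1}f(\vac)$ directly (comparing a single coefficient), whereas you apply $f$ to the full generating series to get $f(Y_U(u,x)\vac)=Y(u,x)f(\vac)$ and then set $x=0$, which forces you to additionally invoke $f(\vac)\in V=C_A(U)$ to guarantee the right-hand side lies in $A[[x]]$ — a step the paper's one-coefficient argument does not need.
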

\begin{proof}
 For any $u\in U$, $f\in V_0$, we have
 \begin{equation*}
  \psi(u\otimes\varphi_\vac(f)) =\psi(u\otimes f(\vac)) =u_{-1}f(\vac)=f(u_{-1}\vac)=f(u)=\varphi(u\otimes f),
 \end{equation*}
showing that $\psi\circ(\Id_U\otimes\varphi_1\vac)=\varphi\vert_{U_0\otimes V_0}$. Then injectivity of $\psi$ follows from injectivity of $\varphi$ and the fact that $\varphi_\vac: V_0\rightarrow V$ is an isomorphism.
\end{proof}

Since $V_0\cong V$ as $V$-modules, $\dim\hom_V(V,V_0)\geq 1$. We will get equality, as in the third assertion of Theorem \ref{thm:fund_coset}, if the fourth assertion, that $V$ is simple, holds. Indeed, because $U\otimes V\cong U_0\otimes V_0$ occurs as a $U\otimes V$-module direct summand of $A$, and because $A$ is simple, an argument given in the proof of \cite[Theorem 1.2]{McR-fin-ind-ss} shows that $U\otimes V$ is simple. Then it is easy to see $V$ is simple as well (see \cite[Proposition 4.7.2]{FHL}).

It remains to prove the $i\neq 0$ case of the third assertion of Theorem \ref{thm:fund_coset}. Thus we finally eliminate the possibility of non-zero homomorphisms from $V$ to $V_i$ for $i\neq 0$ using the assumption that $U=C_A(V)$:
\begin{propo}
 For $i\neq 0$, $\hom_V(V,V_i)=0$.
\end{propo}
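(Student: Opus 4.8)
The plan is to describe the commutant $C_A(V)$ in two ways and compare the resulting $U$-module structures. By the characterisation \eqref{eqn:comm_char}, applied with $V$ in place of $U$ — which is legitimate since $\omega_V=\omega-\omega_U\in A_{(2)}\cap V$ and $L(1)\omega_V=0$ (indeed $L(1)\omega=0$ in every vertex operator algebra by skew-symmetry, and $L(1)\omega_U=0$ by hypothesis) — we have
\begin{equation*}
 C_A(V)=\ker\big(L_V(-1)\colon A\to A\big),\qquad L_V(-1)=\mathrm{Res}_x\,Y(\omega_V,x).
\end{equation*}

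First I would transport this kernel through the $U\otimes V$-module isomorphism $\varphi\colon\bigoplus_{i\in I}U_i\otimes V_i\to A$ of the second assertion of the theorem. On any $U\otimes V$-module, $L_V(-1)$ is the residue of the vertex operator attached to $\vac_U\otimes\omega_V$, so on the summand $U_i\otimes V_i$ it acts as $\Id_{U_i}\otimes L_{V_i}(-1)$, where $L_{V_i}(-1)$ is the translation operator of the $V$-module $V_i$. Since $\ker(\Id_{U_i}\otimes L_{V_i}(-1))=U_i\otimes\ker L_{V_i}(-1)$, this yields
\begin{equation*}
 \varphi^{-1}\big(C_A(V)\big)=\bigoplus_{i\in I}U_i\otimes (V_i)^V,\qquad (V_i)^V:=\ker L_{V_i}(-1)\cong\hom_V(V,V_i),
\end{equation*}
the last identification being Corollary 4.7.6 and Proposition 4.7.7 of \cite{LL}. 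In particular, as a $U$-module, $C_A(V)$ is a direct sum of copies of the pairwise non-isomorphic irreducible modules $U_i$, with $U_i$ occurring with multiplicity $\dim (V_i)^V$.

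Next I would invoke the double-commutant hypothesis $U=C_A(V)$. Inside $A$, the subspace $C_A(V)=U$ is $U$-stable and carries its tautological $U$-module structure (the restriction of $Y_A$ to $U\otimes U$ is $Y_U$); because $U$ is a simple vertex operator algebra, this $U$-module is irreducible (see \cite[Remark 3.9.8]{LL}), hence isomorphic to $U_0$. Comparing with the previous paragraph, the isotypic decomposition of the $U$-module $C_A(V)$ must consist of a single copy of $U_0$; in particular $(V_i)^V=0$, and therefore $\hom_V(V,V_i)=0$, for every $i\neq 0$. (As a byproduct one also reads off $\dim (V_0)^V=1$, re-proving the $i=0$ case via $\hom_V(V,V_0)\cong\Endo_V(V)=\CC$.)

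The argument is essentially bookkeeping once both descriptions of $C_A(V)$ are in hand. The only steps needing a little care are verifying that \eqref{eqn:comm_char} applies to $V$ (for which one needs $L(1)\omega_V=0$) and that $\varphi$ genuinely intertwines $L_V(-1)$ with $\bigoplus_{i\in I}\Id_{U_i}\otimes L_{V_i}(-1)$ — both immediate, the latter simply because $\varphi$ is a $U\otimes V$-module map. I do not expect a serious obstacle here: all of the content is carried by the hypothesis $U=C_A(V)$ together with the pairwise distinctness of the modules $U_i$ occurring in $A$.
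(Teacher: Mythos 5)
Your proof is correct, and it takes a genuinely different route from the paper's. The paper argues pointwise: given a $V$-module homomorphism $F:V\to V_i=\hom_U(U_i,A)$, it evaluates $F[\vac]$ on elements of $U_i$, uses the commutant characterization \eqref{eqn:comm_char} to show each $F[\vac](u_i)$ lands in $C_A(V)=U$, deduces that $F[\vac]$ is a $U$-module map $U_i\to U$ (hence zero for $i\neq 0$), and concludes $F=0$ from $F[v]=v_{-1}F[\vac]$. You instead compute the commutant globally: transporting $\ker L_V(-1)$ through the isomorphism $\varphi$ gives $C_A(V)\cong\bigoplus_{i\in I} U_i\otimes(V_i)^V$ as a $U$-module, and comparing isotypic decompositions with $C_A(V)=U\cong U_0$ (irreducible, since $U$ is simple) forces $(V_i)^V=0$ for $i\neq 0$. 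Your approach is more structural, uses the $\ker L_{V_i}(-1)\cong\hom_V(V,V_i)$ identification (which the paper invokes only later, for the invariants functor in Subsection \ref{subsec:inv_functor}), and neatly re-derives the $i=0$ case $\dim\hom_V(V,V_0)=1$ as a byproduct. The paper's argument is a bit more elementary in that it manipulates the maps $F$ and vectors $F[\vac](u_i)$ directly rather than matching isotypic components. One small imprecision: $L(1)\omega=0$ is not really a consequence of skew-symmetry; the standard argument is $L(1)\omega=L(1)L(-2)\vac=[L(1),L(-2)]\vac=3L(-1)\vac=0$ using the Virasoro commutation relations and $L(-1)\vac=0$. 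The conclusion is correct, so this does not affect your proof.
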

\begin{proof}
 Take a $V$-module homomorphism $F: V\rightarrow V_i=\hom_U(U_i,A)$, denoted by $v\mapsto F[v]$. Then for any $u_i\in U_i$, $F[\vac](u_i)\in A$ satisfies
 \begin{equation*}
  Y(v,x)F[\vac](u_i) = Y_A(\vac\otimes v,x)F[\vac](u_i)
 = \left[Y_{V_i}(v,x)F[\vac]\right](u_i)=F\left[Y_V(v,x)\vac\right](u_i)\in A[[x]]
 \end{equation*}
for all $v\in V$. As in \eqref{eqn:comm_char}, this means $F[\vac](u_i)\in C_A(V)=U$ for any $u_i\in U_i$, that is, $F[\vac]$ defines a $U$-module homomorphism from $U_i$ to $U$. Since $i\neq 0$, $U_i$ and $U$ are non-isomorphic irreducible $U$-modules, meaning $F[\vac]=0$. But then
\begin{equation*}
 F[v]=F[v_{-1}\vac]=v_{-1}F[\vac]=0
\end{equation*}
for any $v\in V$, showing $F=0$.
\end{proof}

\section{Proof of Proposition \ref{prop:IndC_intw_op_cond}}\label{app:intw_op_proof}

Let $U$ and $V$ be vertex operator algebras, and let $\cU$ and $\cV$ be categories of grading-restricted generalized $U$- and $V$-modules, respectively, such that:
\begin{itemize}
 \item Every $U$-module in $\cU$ is semisimple and $C_1$-cofinite.
 
 \item The category $\cV$ is closed under submodules, quotients, and finite direct sums.
\end{itemize}
We need to show that if $\cV$ satisfies the intertwining operator condition of Assumption \ref{assum:inf_order}(2), then so does the category $\cC=\cU\tens\cV$.

Suppose $\cY$ is an intertwining operator of type $\binom{X_3}{X_1\,X_2}$ where $X_1$, $X_2$ are objects of $\cC$ and $X_3$ is an object of $\ind(\cC)$; we need to show that $\im\cY$ is an object of $\cC$. Since $\cU$ is semisimple and since modules in $\cU$ are $C_1$-cofinite, every module in $\cU$ is a finite direct sum of irreducible $U$-modules in $\cU$. Therefore $\im\cY$ will be a finite sum of images of intertwining operators of type $\binom{X_3}{M_1\otimes W_1\,M_2\otimes W_2}$ where $M_1$, $M_2$ are irreducible objects in $\cU$ and $W_1$, $W_2$ are objects in $\cV$. Since a sum of submodules is a quotient of a direct sum, and because $\cC$ is closed under quotients and finite direct sums, it is enough to consider the case $X_1=M_1\otimes W_1$ and $X_2=M_2\otimes W_2$.

Now what can we say about $X_3$? Since $X_3$ is the union of its $\cC$-submodules, it is the union of semisimple $U$-submodules in $\cU$. That is, $X_3$ is a generalized $U\otimes V$-modules which, as a weak $U$-module is the direct sum of irreducible grading-restricted $U$-modules. Thus by Theorem \ref{thm:decomp_of_UV-mod},
\begin{equation*}
 X_3\cong\bigoplus_{j\in J} M_3^{j}\otimes W_3^{j}
\end{equation*}
as a $U\otimes V$-module, where $\lbrace M_3^j\rbrace_{j\in J}$ are the distinct irreducible $U$-modules from $\cU$ occurring in $X_3$ and each $W_3^{j}=\hom_U(M_3^{j},X_3)$ is a generalized $V$-module.

We will need each $W_3^{j}$ to be an object of $\ind(\cV)$, that is, any $f\in W_3^{j}=\hom_U(M_3^{j},X_3)$ should be contained in a $V$-submodule that is an object of $\cV$. To show this, note that any non-zero $m_j\in M_3^{j}$ determines a $V$-module embedding of $W_3^{j}$ into $X_3$ under which $f$ maps to $f(m_j)$. So we need $f(m_j)$ to be contained in a $V$-submodule of $X_3$ which is an object of $\cV$. Since $X_3$ is an object of $\ind(\cC)$, the $U\otimes V$-submodule generated by $f(m_j)$ is an object of $\cC$; as $m_j\in M_3^{j}$, this submodule must be isomorphic to $M_3^{j}\otimes \widetilde{W}_3^{j}$ for some $V$-module $\widetilde{W}_3^{j}$ in $\cV$. We may assume $m_j$ is $L_U(0)$-homogeneous, so that $f(m_j)$ is contained in a $V$-submodule of $X_3$ that is isomorphic to $(M_3^{j})_{[\mathrm{wt}\,m_j]}\otimes\widetilde{W}_3^{j}$, which is an object of $\cV$ since $(M_3^{j})_{[\mathrm{wt}\,m_j]}$ is finite dimensional and $\cV$ is closed under finite direct sums. This shows $W_3^{j}$ is an object of $\ind(\cV)$.  

Before returning to the intertwining operator $\cY$, we need the following lemma which is a minor generalization of \cite[Proposition 13.18]{DL} and \cite[Proposition 2.11]{ADL}:
\begin{lemma}\label{lem:UV_intw_to_U_intw}
 Suppose $U$ and $V$ are vertex operator algebras, $M_1$, $M_2$, and $M_3$ are generalized $U$-modules, $W_1$, $W_2$, and $W_3$ are generalized $V$-modules, and $\cY$ is a (logarithmic) intertwining operator of type $\binom{M_3\otimes W_3}{M_1\otimes W_1\,M_2\otimes W_2}$. For $h\in\CC$, let $P_{h}: M_3\otimes W_3\rightarrow M_3\otimes(W_3)_{[h]}$ denote the projection onto the generalized eigenspace for $L_V(0)$ with generalized eigenvalue $h$. Then for any $w_1\in W_1$, $w_2\in W_2$, and $h\in\CC$,
 \begin{equation*}
  \cY_{w_1,w_2;h}=x^{-L_V(0)} P_{h}\cY(\cdot\otimes x^{L_V(0)} w_1,x)(\cdot\otimes x^{L_V(0)} w_2)
 \end{equation*}
is a generalized $U$-module intertwining operator of type $\binom{M_3\otimes (W_3)_{[h]}}{M_1\,M_2}$.
\end{lemma}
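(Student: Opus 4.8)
The strategy is to verify directly that $\cY_{w_1,w_2;h}$ satisfies the axioms of a logarithmic intertwining operator for the vertex operator algebra $U$: the lower truncation property, the Jacobi identity, the $L_U(-1)$-derivative property, and the grading-compatibility (and logarithm-degree) conditions from \cite[Definition 3.10]{HLZ2}. The essential point is that $M_1\otimes W_1$, $M_2\otimes W_2$, $M_3\otimes W_3$ are $U\otimes V$-modules on which the vertex operators factor as $Y_{M\otimes W}(u\otimes v,x) = Y_M(u,x)\otimes Y_W(v,x)$, and in particular $Y_{M_3\otimes W_3}(u\otimes\vac_V,x)$ acts only in the $M_3$-tensor-factor and commutes with $L_V(0)$, hence commutes with the projection $P_h$. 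This is what makes the restriction of $\cY$ to a fixed pair of vectors $w_1,w_2\in W_1,W_2$, followed by projection to the $L_V(0)$-generalized eigenspace $(W_3)_{[h]}$, behave like an intertwining operator for $U$ alone. The conjugation by $x^{-L_V(0)}$ (resp.\ insertion of $x^{L_V(0)}$) is a standard device (as in \cite[Proposition 13.18]{DL}, \cite[Proposition 2.11]{ADL}) that converts the $W$-factor's contribution to the formal variable into a form compatible with the $L_U(-1)$-derivative property and removes spurious powers of $x$; concretely, the $L(-1)$-derivative property for $\cY$ together with $L(-1) = L_U(-1)\otimes\Id + \Id\otimes L_V(-1)$ splits the derivative, and the $x^{\pm L_V(0)}$ conjugation absorbs the $L_V(-1)$ and $L_V(0)$ pieces, leaving exactly the $L_U(-1)$-derivative property for $\cY_{w_1,w_2;h}$.

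The key steps, in order, are: (1) record that on $M\otimes W$ one has $u\otimes v \mapsto Y_M(u,x)Y_W(v,x)$ exactly as in \eqref{eqn:Jacobi_calc}, so that $Y_{M_3\otimes W_3}(u\otimes\vac_V,x) = Y_{M_3}(u,x)\otimes\Id_{W_3}$ commutes with $P_h$ and with $x^{\pm L_V(0)}$; (2) check lower truncation: for fixed $w_1$, $\cY(\cdot\otimes x^{L_V(0)}w_1,x)(\cdot\otimes x^{L_V(0)}w_2)$ is lower-truncated in each $M_3\otimes(W_3)_{[h]}$-homogeneous component because $\cY$ is lower-truncated and $(W_3)_{[h]}$ is a grading-restricted (hence lower-bounded) generalized $V$-module, and applying $P_h$ and $x^{-L_V(0)}$ only shifts exponents by the bounded set of $L_V(0)$-weights appearing, so truncation is preserved; (3) derive the Jacobi identity for $\cY_{w_1,w_2;h}$ from the Jacobi identity for $\cY$ by plugging in vectors of the form $u\otimes\vac_V$ into the first slot — the $V$-factor then contributes only $Y_W(\vac_V,x_0) = \Id$, so the three-term Jacobi identity for $\cY$ restricted to $U\otimes\vac_V$ collapses to the Jacobi identity for $\cY_{w_1,w_2;h}$ after commuting $P_h$ and $x^{-L_V(0)}$ past the $U$-vertex operators (this uses step (1) crucially); (4) verify the $L_U(-1)$-derivative property as described above, using the $L(-1)$-derivative property of $\cY$ and the commutation $[L_V(0), x^{-L_V(0)}]=0$ together with the standard identity $x^{-L_V(0)}\,\tfrac{d}{dx}\bigl(x^{L_V(0)}\cdot\bigr) = \tfrac{d}{dx} + x^{-1}L_V(0)$ acting on the inserted $w_i$'s; (5) confirm the grading and logarithm conditions: $\cY_{w_1,w_2;h}$ maps into $M_3\otimes(W_3)_{[h]}$, which is a grading-restricted generalized $U$-module (each $L_U(0)$-weight space is finite-dimensional since $M_3$ is and $(W_3)_{[h]}$ is), and only finitely many powers of $\log x$ occur since $\cY$ has this property and multiplication by $x^{\pm L_V(0)}$ and projection do not increase logarithm degree.

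I expect the main obstacle to be bookkeeping rather than conceptual: carefully tracking how conjugation by $x^{-L_V(0)}$ interacts with the formal-variable structure and the logarithmic terms, and making precise the claim that applying $P_h$ and $x^{-L_V(0)}$ preserves lower truncation (one must use that the generalized $L_V(0)$-eigenvalues of $W_3$ that can appear in $\cY(w_1\otimes\cdots, x)(w_2\otimes\cdots)$, for fixed $L_V(0)$-homogeneous $w_1, w_2$ and fixed total conformal weight, form a coset of a discrete set, so that each $P_h$-component is genuinely a formal series in $x$ with powers bounded below). Since this lemma is explicitly stated to be ``a minor generalization of \cite[Proposition 13.18]{DL} and \cite[Proposition 2.11]{ADL},'' I would organize the proof to mirror those references, pointing out that the only new feature is allowing logarithmic intertwining operators and generalized (rather than ordinary) modules, which affects only the logarithm-degree bookkeeping and the use of generalized eigenspaces in place of eigenspaces — neither of which obstructs the argument.
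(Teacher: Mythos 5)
Your proposal follows the same direct-verification strategy as the paper's proof: you check lower truncation, derive the Jacobi identity by substituting $u\otimes\vac_V$ and commuting the $U$-vertex operator past $P_h$ and $x^{\pm L_V(0)}$, and obtain the $L_U(-1)$-derivative property from the full $L(-1)$-derivative property together with the cancellation of the $L_V(-1)$ and $L_V(0)$ contributions against the conjugation terms. One small correction to step (5): conjugation by $x^{\pm L_V(0)} = x^{\pm\mathrm{wt}} e^{\pm L_V^{\mathrm{nil}}(0)\log x}$ does increase the $\log x$-degree via the nilpotent exponential; what matters is that $L_V^{\mathrm{nil}}(0)$ is locally nilpotent, so the increase is bounded and the finiteness condition survives, and this also renders the ``coset of a discrete set'' worry in your obstacle paragraph unnecessary, since after factoring out the fixed powers $x^{-h}$, $x^{\mathrm{wt}\,w_1}$, $x^{\mathrm{wt}\,w_2}$ one is left with a finite sum of lower-truncated intertwining-operator outputs applied to the finitely many nonzero $L_V^{\mathrm{nil}}(0)^{k_i}w_i$.
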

\begin{proof}
Note first that
\begin{equation*}
 x^{-L_V(0)} P_{h}=x^{-h} e^{-L_V^{nil}(0)\log x} P_{h},
\end{equation*}
where $L_V^{nil}(0)$ is the locally nilpotent part of $L_V(0)$ on $W_3$. Also, for homogeneous $w_1$ and $w_2$,
\begin{equation*}
 x^{L_V(0)} w_i = x^{\mathrm{wt}\,w_i} e^{L_V^{nil}(0)\log x} w_i=\sum_{k\geq 0}\frac{1}{k!} L_V^{nil}(0)^k w_i\,x^{\mathrm{wt}\,w_i}(\log x)^k.
\end{equation*}
These show that $\cY_{w_1,w_2;h}(m_1,x)m_2$ for any $m_1\in M_1$, $m_2\in M_2$ is a series in $x$ and $\log x$ such that the coefficient of any power of $x$ is a (finite) polynomial in $\log x$, with coefficients in $M_3\otimes(W_3)_{[h]}$. Further, $\cY_{w_1,w_2;h}$ is lower truncated because each
\begin{equation*}
 \cY(m_1\otimes L_V^{nil}(0)^{k_1} w_1,x)(m_2\otimes L_V^{nil}(0)^{k_2} w_2)
\end{equation*}
is lower truncated for the finitely many $k_1$, $k_2$ such that $L_V^{nil}(0)^{k_i}w_i\neq 0$.

The Jacobi identity for $\cY_{w_1,w_2;h}$ follows from the relations
\begin{align*}
 Y_{M_3\otimes(W_3)_{[h]}}(u, \,& x_1)  \cY_{w_1,w_2;h}(m_1,x_2)m_2\nonumber\\
 &=(Y_{M_3}(u,x_1)\otimes\Id_{(W_3)_{[h]}})x_2^{-L_V(0)}P_{h}\cY(m_1\otimes x_2^{L_V(0)}w_1,x_2)(m_2\otimes x_2^{L_V(0)}w_2)\nonumber\\
 & = x_2^{-L_V(0)} P_{h} Y_{M_3\otimes W_3}(u\otimes\vac,x_1)\cY(m_1\otimes x_2^{L_V(0)}w_1,x_2)(m_2\otimes x_2^{L_V(0)}w_2),
\end{align*}
\begin{align*}
 \cY_{w_1,w_2;h}(m_1,\, & x_2) Y_{M_2}(u,x_1)m_2\nonumber\\
 & =x_2^{-L_V(0)} P_{h}\cY(m_1\otimes x_2^{L_V(0)}w_1,x_2)(Y_{M_2}(u,x_1)m_2\otimes x_2^{L_V(0)}w_2)\nonumber\\
 & =x_2^{-L_V(0)} P_{h}\cY(m_1\otimes x_2^{L_V(0)}w_1,x_2)Y_{M_2\otimes W_2}(u\otimes \vac,x_1)(m_2\otimes x_2^{L_V(0)}w_2),
\end{align*}
and
\begin{align*}
 \cY_{w_1,w_2;h}( & Y_{M_1}(u,x_0)m_1,x_2)m_2\nonumber\\
 & =x_2^{-L_V(0)}P_{h}\cY(Y_{M_1}(u,x_0)m_1\otimes x_2^{L_V(0)}w_1,x_2)(m_2\otimes x_2^{L_V(0)}w_2)\nonumber\\
 & =x_2^{-L_V(0)}P_{h}\cY(Y_{M_1\otimes W_1}(u\otimes\vac,x_0)(m_1\otimes x_2^{L_V(0)}w_1),x_2)(m_2\otimes x_2^{L_V(0)}w_2)
\end{align*}
for $u\in U$, $m_1\in M_1$, and $m_2\in M_2$, together with the Jacobi identity for the $U\otimes V$-module intertwining operator $\cY$.

For the $L_U(-1)$-derivative property, we calculate
\begin{align*}
 \frac{d}{dx} & \cY_{w_1,w_2;h}(m_1,x)m_2 \nonumber\\
 & = x^{-L_V(0)}P_{h}\left.\frac{d}{dx}\right\vert_{(m_1\otimes x^{L_V(0)} w_1)\otimes(m_2\otimes x^{L_V(0)}w_2)}\cY(\cdot,x)\cdot \nonumber\\
 &\qquad -L_V(0) x^{-L_V(0)-1} P_{h}\cY(m_1\otimes x^{L_V(0)}w_1,x)(m_2\otimes x^{L_V(0)}w_2)\nonumber\\
 & \qquad +x^{-L_V(0)}P_{h}\cY(L_V(0)(m_1\otimes x^{L_V(0)-1}w_1),x)(m_2\otimes x^{L_V(0)}w_2)\nonumber\\
 & \qquad +x^{-L_V(0)}P_{h}\cY(m_1\otimes x^{L_V(0)}w_1,x_2)L_V(0)(m_2\otimes x^{L_V(0)-1}w_2)\nonumber\\
 & = x^{-L_V(0)}P_{h}\cY(L(-1)(m_1\otimes x^{L_V(0)}w_1),x)(m_2\otimes x^{L_V(0)}w_2)\nonumber\\
 & \qquad -x^{-L_V(0)-1}P_{h}\left(L_V(0)\cY(m_1\otimes x^{L_V(0)}w_1,x)(m_2\otimes x^{L_V(0)}w_2)\right.\nonumber\\
 & \hspace{10em} -\cY(L_V(0)(m_1\otimes x^{L_V(0)}w_1),x)(m_2\otimes x^{L_V(0)}w_2)\nonumber\\
 &\hspace{10em} \left.-\cY(m_1\otimes x^{L_V(0)}w_1,x)L_V(0)(m_2\otimes x^{L_V(0)}w_2)\right)\nonumber\\
 & =x^{-L_V(0)}P_{h}\cY(L_U(-1)m_1\otimes x^{L_V(0)}w_1,x)(m_2\otimes x^{L_V(0)}w_2)\nonumber\\
 & \qquad -x^{-L_V(0)-1}P_{h}\left(L_V(0)\cY(m_1\otimes x^{L_V(0)}w_1,x)(m_2\otimes x^{L_V(0)}w_2)\right.\nonumber\\
 & \hspace{10em} -\cY((xL_V(-1)+L_V(0))(m_1\otimes x^{L_V(0)}w_1),x)(m_2\otimes x^{L_V(0)}w_2)\nonumber\\
 &\hspace{10em} \left.-\cY(m_1\otimes x^{L_V(0)}w_1,x)L_V(0)(m_2\otimes x^{L_V(0)}w_2)\right)\nonumber\\
 &  =\cY_{w_1,w_2;h}(L_U(-1)m_1,x)m_2,
\end{align*}
where we have used the $L(-1)$-derivative property for $\cY$ and the commutator formula for $L_V(0)=(\vac\otimes\omega_V)_1$.
\end{proof}

We now apply the lemma to our intertwining operator $\cY$ of type $\binom{X_3}{M_1\otimes W_1\,M_2\otimes W_2}$, where $X_3\cong\bigoplus_{j\in J} M_3^{j}\otimes W_3^{j}$: For any $w_1\in W_1$, $w_2\in W_2$, and $h\in\CC$, we get a $U$-module intertwining operator
\begin{equation*}
 \cY_{w_1,w_2;h}: M_1\otimes M_2\rightarrow\bigg(\bigoplus_{j\in J} M_3^{j}\otimes (W_3^{j})_{[h]}\bigg)[\log x]\lbrace x\rbrace
\end{equation*}
For any $j\in J$, we compose $\cY_{w_1,w_2;h}$ with the $U$-module projection $P_{j,h}$ onto $M_3^{j}\otimes(W_3^{j})_{[h]}$. Then, for any $w_3'\in(W^{j}_3)_{[h]}^*$, we extend $w_3'$ to the $U$-homomorphism
\begin{equation*}
 \Id_{M_3^{j}}\otimes w_3': M_3^{j}\otimes(W_3^{j})_{[h]}\rightarrow M_3^{j}.
\end{equation*}
The result is an intertwining operator 
\begin{equation*}
(\Id_{M_3^{j}}\otimes w_3')\circ P_{j,h}\circ\cY_{w_1,w_2;h}: M_1\otimes M_2\rightarrow M_3^{j}[\log x]\lbrace x\rbrace
\end{equation*} 
for any $j\in\widetilde{J}$, $h\in\CC$, $w_1\in W_1$, $w_2\in W_2$, and $w_3'\in (W_3^{j})_{[h]}^*$.

\begin{lemma}\label{lem:C1-cofin_bound}
 The space of intertwining operators of type $\binom{M_3^j}{M_1\,M_2}$ is non-zero for at most finitely many $j\in J$.
\end{lemma}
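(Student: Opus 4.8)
The plan is to reduce Lemma \ref{lem:C1-cofin_bound} to a finiteness statement about the irreducible quotients of a single $C_1$-cofinite $U$-module. Since $M_1$ and $M_2$ are objects of $\cU$, they are $C_1$-cofinite, so by Miyamoto's closure theorem \cite{Mi} (see also \cite{CMY}) the tensor product $T:=M_1\tensU M_2$ exists and is again a $C_1$-cofinite, hence grading-restricted, hence finitely generated, generalized $U$-module. By the universal property of the tensor product, applied to the $C_1$-cofinite modules $M_3^j$, there is for each $j\in J$ an isomorphism
\begin{equation*}
 \cV^{M_3^j}_{M_1,M_2}\cong\hom_U(T,M_3^j).
\end{equation*}
Since each $M_3^j$ is irreducible, a nonzero homomorphism $T\to M_3^j$ is automatically surjective, so it suffices to prove that $T$ has only finitely many isomorphism classes of irreducible quotients among the $M_3^j$, $j\in J$.

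First I would confine the lowest conformal weights of the relevant $M_3^j$ to a finite set. Choosing homogeneous generators of $T$ of conformal weights $\alpha_1,\dots,\alpha_s$, every conformal weight of $T$ lies in $\bigcup_{i=1}^s(\alpha_i+\ZZ)$ and has real part bounded below by some $h_0$. If $T$ surjects onto $M_3^j$, then $M_3^j$ is generated by the images of these generators, so its lowest conformal weight $h_j$ satisfies $\mathrm{Re}(h_j)\leq B:=\max_i\mathrm{Re}(\alpha_i)$; moreover $h_j$ is a conformal weight of $T$, so $h_j\in\bigcup_i(\alpha_i+\ZZ)$ and $\mathrm{Re}(h_j)\geq h_0$. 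A finite union of $\ZZ$-cosets meets the strip $\{h_0\leq\mathrm{Re}(h)\leq B\}$ in a finite set $F$, so every such $h_j$ lies in $F$. Consequently $T_{[\leq B]}:=\bigoplus_{h_0\leq\mathrm{Re}(h)\leq B}T_{[h]}$ is finite-dimensional, being a finite direct sum of finite-dimensional conformal weight spaces.

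Finally I would cap the total number of such quotients. Let $J_0\subseteq J$ be the set of $j$ with $\cV^{M_3^j}_{M_1,M_2}\neq 0$; for any finite subset $J_1\subseteq J_0$, the module $Q:=T\big/\bigcap_{j\in J_1}\ker(T\to M_3^j)$ embeds into $\bigoplus_{j\in J_1}M_3^j$, hence has finite length, and each distinct $M_3^j$, $j\in J_1$, occurs as a composition factor of $Q$ (being an irreducible quotient of it). Each such composition factor contributes at least one dimension to $Q$ in conformal weight $h_j\in F$, so $\dim Q_{[\leq B]}\geq|J_1|$; but $Q_{[\leq B]}$ is a quotient of the finite-dimensional space $T_{[\leq B]}$, so $|J_1|\leq\dim T_{[\leq B]}$. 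As this bound is uniform over all finite $J_1\subseteq J_0$, the set $J_0$ is finite, which is exactly the assertion of the lemma. The main obstacle is this last step: $T$ need not have finite length, so the argument cannot simply invoke semisimplicity or finite length of the fusion product, and must instead leverage the full force of $C_1$-cofiniteness — finite-dimensional conformal weight spaces together with confinement of the weights to finitely many $\ZZ$-cosets — to bound the irreducible quotients. (A more computational alternative would be to derive the finiteness of $F$ directly from Huang's bounds on the singular exponents of correlation functions built from a $C_1$-cofinite module, but the route above keeps the argument within the tensor-categorical framework already set up.)
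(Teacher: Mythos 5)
Your proof is correct, but it takes a genuinely different route from the paper's. The paper argues as follows: given any finite collection $\{j_1,\dots,j_K\}\subseteq J$ with $\cV^{M_3^{j_k}}_{M_1,M_2}\neq 0$, it forms the surjective intertwining operator $\bigoplus_k \cY_k$ of type $\binom{\bigoplus_k M_3^{j_k}}{M_1\,M_2}$ and invokes \cite[Lemma 3]{Mi} (not the Key Theorem) directly, which yields the numerical bound
\begin{equation*}
\dim\Bigl(\bigoplus_k M_3^{j_k}\Big/C_1\bigl(\textstyle\bigoplus_k M_3^{j_k}\bigr)\Bigr)\leq\dim\bigl(M_1/C_1(M_1)\bigr)\dim\bigl(M_2/C_1(M_2)\bigr).
\end{equation*}
Since each simple summand $M_3^{j_k}$ contributes at least one dimension to the left side (its lowest weight space meets $C_1(M_3^{j_k})$ trivially), this bounds $K$ uniformly, with no mention of fusion products. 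You instead pass through the fusion product $T=M_1\boxtimes M_2$, use Miyamoto's Key Theorem to see that $T$ is $C_1$-cofinite (hence grading restricted and finitely generated), use finite generation to confine the lowest conformal weights $h_j$ of the relevant $M_3^j$ to a finite set $F$, and then count dimensions in the finite-dimensional truncation $T_{[\leq B]}$. Both approaches run on the same fuel — $C_1$-cofiniteness of $M_1$ and $M_2$ — but the paper's is more economical and produces an explicit bound. Yours is slightly longer but gives a clean conceptual picture (the $M_3^j$ with nonzero intertwining operators are exactly the irreducible quotients of a single finitely generated $C_1$-cofinite module), and the weight-confinement step is essentially a form of Miyamoto's Lemma 3 in disguise. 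One caveat worth flagging: in the hypotheses of Proposition \ref{prop:IndC_intw_op_cond}, $\cU$ is only assumed to be a category of semisimple $C_1$-cofinite modules, not a tensor category, so the symbol $\tensU$ in your argument should be understood as the fusion product in the full $C_1$-cofinite module category (for which the universal property is established in \cite{Mi, CMY}), not an internal tensor product on $\cU$. Also, your phrase that ``each such composition factor contributes at least one dimension to $Q$'' is slightly informal — the clean version is that $Q$, being a submodule of the semisimple module $\bigoplus_{j\in J_1}M_3^j$ that surjects onto each simple summand, must equal $\bigoplus_{j\in J_1}M_3^j$, after which the dimension count is immediate.
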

\begin{proof}
 Suppose $\lbrace j_k\rbrace_{k=1}^K\subseteq J$ is such that each space of intertwining operators of type $\binom{M_3^{j_k}}{M_1\,M_2}$ is non-zero. Then there is a non-zero intertwining operator $\cY$ of type $\binom{M_3}{M_1\,M_2}$, where $M_3=\bigoplus_{k=1}^K M_3^{j_k}$. Since the simple $U$-modules $M_3^{j_k}$ are non-isomorphic, $\cY$ is a surjective intertwining operator. Thus because $M_1$ and $M_2$ are $C_1$-cofinite by assumption, \cite[Lemma 3]{Mi} shows that there is a constant $K(M_1,M_2)$ such that $\dim(M_3/C_1(M_3))\leq K(M_1,M_2)$ (in fact, the proof of \cite[Lemma 3]{Mi} shows that we may take $K(M_1,M_2)=\dim(M_1/C_1(M_1)) \dim(M_2/C_1(M_2))<\infty$).
 
 Now, $M_3$ has an $\NN$-grading $M_3=\bigoplus_{n\in\NN} M_3(n)$ such that $M_3(0)$ is the direct sum of the lowest conformal weight spaces of the $M_3^{j_k}$ and such that $C_1(M_3)\subseteq\bigoplus_{n\geq 1} M_3(n)$. This forces $K\leq K(M_1,M_2)<\infty$, proving the lemma.
\end{proof}

Now since the $M_3^{j}$ are non-isomorphic, $\im\cY$ is the direct sum of its projections to the $M_3^{j}\otimes W_3^{j}$. If the image of such a projection $P_j$ is non-zero for some $j\in J$, then for some $m_1\in M_1$, $m_2\in M_2$, $w_1\in W_1$, and $w_2\in W_2$, we have
\begin{align*}
 0 & \neq x^{-L_V(0)} P_j\circ\cY(m_1\otimes w_1,x)(m_2\otimes w_2)\nonumber\\
 & =P_j\circ x^{-L_V(0)}\cY(m_1\otimes w_1,x)(m_2\otimes w_2)\nonumber\\
 & =P_j\circ\sum_{h\in\CC} \cY_{x^{-L_V(0)}w_1,x^{-L_V(0)}w_2;h}(m_1,x)m_2\nonumber\\
 & =\sum_{h\in\CC} P_{j,h}\circ\cY_{x^{-L_V(0)}w_1,x^{-L_V(0)}w_2;h}(m_1,x)m_2.
\end{align*}
Since each summand on the right side is a (finite) series in $x$ and $\log x$ with coefficients $P_{j,h}\circ\cY_{\widetilde{w}_1,\widetilde{w}_2;h}$ for certain $\widetilde{w}_1\in W_1$, $\widetilde{w}_2\in W_2$, it follows that $P_j\circ\cY\neq 0$ only if $P_{j,h}\circ\cY_{\til{w}_1,\til{w}_2;h}\neq 0$. Then since $P_{j,h}\circ\cY_{\til{w}_1,\til{w}_2;h}\neq 0$ implies
\begin{equation*}
 (\Id_{M_3^{j}}\otimes w_3')\circ P_{j,h}\circ\cY_{\til{w}_1,\til{w}_2;h}\neq 0
\end{equation*}
for some $w_3'\in (W_3^j)_{[h]}^*$, Lemma \ref{lem:C1-cofin_bound} shows that $P_j\circ\cY\neq 0$ for finitely many $j\in J$, that is, $\im\cY$ is contained in a finite direct sum of $U\otimes V$-submodules $M_3^{j}\otimes W_3^{j}$. Since $\cC$ is closed under finite direct sums, we are reduced to showing that if $\cY$ is an intertwining operator of type $\binom{M_3\otimes W_3}{M_1\otimes W_1\,M_2\otimes W_2}$ where $M_1$, $M_2$, and $M_3$ are irreducible $U$-modules in $\cU$, $W_1$ and $W_2$ are $V$-modules in $\cV$, and $W_3$ is a generalized $V$-module in $\ind(\cV)$, then $\im\cY$ is an object of $\cC$.

We will be done if we can generalize \cite[Theorem 2.10]{ADL} to show that an intertwining operator $\cY$ of type $\binom{M_3\otimes W_3}{M_1\otimes W_1\,M_2\otimes W_2}$, as in the preceding paragraph, is a finite sum
\begin{equation*}
 \cY=\sum \cY^U_i\otimes\cY^V_i
\end{equation*}
where the $\cY^U_i$ are $U$-module intertwining operators of type $\binom{M_3}{M_1\,M_2}$ and the $\cY^V_i$ are $V$-module intertwining operators of type $\binom{W_3}{W_1\,W_2}$. For, we will then have 
\begin{equation*}
 \im\cY\subseteq\sum \im\cY^U_i\otimes\im\cY^V_i\subseteq M_3\otimes\sum\im\cY^V_i.
\end{equation*}
Since by assumption $\cV$ is closed under finite direct sums and quotients and each $\im\cY^V_i$ is an object of $\cV$, we find that $M_3\otimes\sum\im\cY^V_i$ is an object of $\cC$. But then $\cC$ is closed under submodules (because $\cU$ and $\cV$ are), so $\im\cY$ is an object of $\cC$.  So we are reduced to proving \cite[Theorem 2.10]{ADL} in the setting that $W_3$ is a generalized $V$-module in $\ind(\cV)$, rather than a grading-restricted $V$-module.

From Lemma \ref{lem:UV_intw_to_U_intw}, we have the $U$-module intertwining operator
\begin{equation*}
 \cY_{w_1,w_2;h}=x^{-L_V(0)} P_h\cY(\cdot\otimes x^{L_V(0)} w_1,x)(\cdot\otimes x^{L_V(0)} w_2)
\end{equation*}
of type $\binom{M_3\otimes (W_3)_{[h]}}{M_1\,M_2}$ for any $w_1\in W_1$, $w_2\in W_2$, and $h\in\CC$. Since $M_3\otimes(W_3)_{[h]}$ is an $\NN$-gradable weak $U$-module and $M_1$, $M_2$ are $C_1$-cofinite by assumption, \cite[Key Theorem]{Mi} implies that $\im\cY_{w_1,w_2;h}$ is $C_1$-cofinite. This means that 
\begin{equation*}
 \im\cY_{w_1,w_2;h}\subseteq M_3\otimes W^{3}_{w_1,w_2;h}
\end{equation*}
for some finite-dimensional subspace $W^{3}_{w_1,w_2;h}\subseteq (W_3)_{[h]}$. Since $W^{3}_{w_1,w_2;h}$ is finite dimensional, $U$-module intertwining operator spaces satisfy
\begin{equation*}
 \cV_{M_1,M_2}^{M_3\otimes W^{3}_{w_1,w_2;h}}\cong\cV_{M_1,M_2}^{M_3}\otimes W^{3}_{w_1,w_2;h},
\end{equation*}
so
\begin{equation*}
 \cY_{w_1,w_2;h} =\sum \cY^U_i\otimes F^{(i)}_h(w_1,w_2)
\end{equation*}
for
\begin{equation*}
 F^{(i)}_h(w_1,w_2)\in W^{3}_{w_1,w_2;h}\subseteq(W_3)_{[h]},
\end{equation*}
where $\lbrace\cY^U_i\rbrace$ is a basis for the space of intertwining operators $\cV_{M_1,M_2}^{M_3}$. Because $M_1$, $M_2$, and $M_3$ are irreducible and $C_1$-cofinite, $\cV_{M_1,M_2}^{M_3}$ is finite dimensional by \cite[Corollary 3.17]{Li_fin_prop}.

From the definitions, we can now write
\begin{equation*}
 \cY(m_1\otimes w_1,x)(m_2\otimes w_2) =\sum_{h\in\CC}\sum_i \cY^U_i(m_1,x)m_2\otimes x^{L_V(0)} F^{(i)}_h(x^{-L_V(0)} w_1,x^{-L_V(0)}w_2)
\end{equation*}
for $m_1\in M_1$, $m_2\in M_2$, $w_1\in W_1$, and $w_2\in W_2$. We will be done if we can show that
\begin{equation*}
 \cY^V_i(w_1,x)w_2=\sum_{h\in\CC} x^{L_V(0)} F^{(i)}_h(x^{-L_V(0)} w_1,x^{-L_V(0)}w_2)
\end{equation*}
defines a $V$-module intertwining operator of type $\binom{W_3}{W_1\,W_2}$ for each $i$. Since $\cY$ is an intertwining operator, Lemma \ref{lem:UV_intw_to_U_intw} (with the roles of $U$ and $V$ reversed) implies that for $m_1\in M_1$, $m_2\in M_2$, and homogeneous $m_3'\in M_3'$,
\begin{align*}
 \cY_{m_1,m_2;m_3'} & = (m_3'\otimes\Id_{W_3})\circ x^{-L_U(0)} P_{\mathrm{wt}\,m_3'}\cY(x^{L_U(0)}m_1\otimes\cdot,x)(x^{L_U(0)}m_2\otimes\cdot)\nonumber\\
 & =\sum_i \langle m_3', x^{-L_U(0)}\cY^U_i(x^{L_U(0)} m_1,x)x^{L_U(0)}m_2\rangle \cY^V_i\nonumber\\
 & =\sum_i \langle m_3',\cY^U_i(m_1,1)m_2\rangle\cY^V_i
\end{align*}
is a $V$-module intertwining operator of type $\binom{W_3}{W_1\,W_2}$. So to show that each $\cY_i^V$ is an intertwining operator, it is sufficient to invert this relation by showing each $\cY_i^V$ is a (finite) linear combination of intertwining operators $\cY_{m_1,m_2; m_3'}$.

Because $M_1$, $M_2$, and $M_3$ are irreducible, \cite[Proposition 2.10]{Li_intw_ops} shows that there is an injective linear map
\begin{equation*}
 \pi: \cV_{M_1,M_2}^{M_3}\hookrightarrow\hom_{A(U)}(A(M_1)\otimes_{A(U)} T(M_2),T(M_3)),
\end{equation*}
where $A(U)$ is the Zhu algebra of $U$, $A(M_1)$ is the $A(U)$-bimodule associated to $M_1$, and $T(M_2)$, $T(M_3)$ are the lowest conformal weight spaces of $M_2$, $M_3$. Specifically,
\begin{equation*}
 \pi(\cY): (m_1+O(M_1))\otimes_{A(U)} m_2 \mapsto o_\cY(m_1)m_2
\end{equation*}
for $m_1\in M_1$, $m_2\in T(M_2)$, where $o_\cY(m_1)$ denotes the component of $\cY(m_1,x)$ that maps $T(M_2)$ to $T(M_3)$. That is, $o_\cY(m_1)m_2=P_{T(M_3)}\cY(m_1,1)m_2$, where $P_{T(M_3)}$ is the projection onto $T(M_3)$ with respect to the conformal weight gradation of $M_3$.

Now if $S$ is any subspace that generates $A(M_1)$ as an $A(U)$-bimodule, we can continue to embed $\cV_{M_1,M_2}^{M_3}$ using injective linear maps as follows:
\begin{align*}
 \cV_{M_1,M_2}^{M_3} & \hookrightarrow\hom_{A(U)}(A(M_1)\otimes_{A(U)} T(M_2),T(M_3))\nonumber\\
 &\hookrightarrow\hom_\CC(S\otimes T(M_2),T(M_3))\hookrightarrow(T(M_3)^*\otimes S\otimes T(M_2))^*.
\end{align*}
Denoting this composition of injections by $\widetilde{\pi}$, we have
\begin{equation*}
 \langle \widetilde{\pi}(\cY), m_3'\otimes(m_1+O(M_1))\otimes m_2\rangle =\langle m_3',o_\cY(m_1)m_2\rangle =\langle m_3',\cY(m_1,1)m_2\rangle
\end{equation*}
for $m_3'\in T(M_3)^*=T(M_3')$, $m_1+O(M_1)\in S$, and $m_2\in T(M_2)$. Since the $\cY^U_i$ are linearly independent, $\widetilde{\pi}$ maps them to linearly independent functionals on $T(M_3')\otimes S\otimes T(M_2)$ which we can complete to a basis of $(T(M_3')\otimes S\otimes T(M_2))^*$. Now by \cite[Proposition 3.16]{Li_fin_prop}, we can take $S$ to be finite dimensional since $M_1$ is $C_1$-cofinite. Thus we have corresponding dual basis elements 
\begin{equation*}
 m_i\in (T(M_3')\otimes S\otimes T(M_2))^{**} =T(M_3')\otimes S\otimes T(M_2)
\end{equation*}
such that
\begin{equation*}
 \langle\widetilde{\pi}(\cY^U_j), m_i\rangle =\delta_{i,j}.
\end{equation*}
Thus for any fixed $i$, if we set $m_i=\sum_k m_{3,k}'\otimes(m_{1,k}+O(M_1))\otimes m_{2,k}$, then we get
\begin{equation*}
 \sum_k \cY_{m_{1,k},m_{2,k};m_{3,k}'} =\sum_k\sum_j \langle m_{3,k}',\cY^U_j(m_{1,k},1)m_{2,k}\rangle \cY^V_j =\sum_j \langle\widetilde{\pi}(\cY^U_j),m_i\rangle\cY^V_j=\cY^V_i.
\end{equation*}
This shows that each $\cY^V_i$ is an intertwining operator, completing the proof of Proposition \ref{prop:IndC_intw_op_cond}.

%

\end{document}